\def\@settitle{\begin{center}%
  \baselineskip14\p@\relax
    \normalfont\LARGE

  \@title
  \end{center}%
}
\newcolumntype{P}[1]{>{\centering\arraybackslash}p{#1}}
\newcolumntype{M}[1]{>{\centering\arraybackslash}m{#1}}
\let\oldmarginpar\marginpar
\renewcommand\marginpar[1]{\-\oldmarginpar[\raggedleft\footnotesize #1]%
	{\raggedright\footnotesize #1}}
\theoremstyle{plain}
\newtheorem{assumption}{Assumption}
\newtheorem{thm}{Theorem}[section]
\newtheorem{lemma}[thm]{Lemma}
\newtheorem*{theorem*}{Theorem}
\newtheorem*{corollary*}{Corollary}
\newtheorem{example}[thm]{Example}
\newtheorem{prop}[thm]{Proposition}
\newtheorem{cor}[thm]{Corollary}
\theoremstyle{definition}
\newtheorem{definition}[thm]{Definition}
\newtheorem{remark}[thm]{Remark}
\numberwithin{equation}{section}
\newcommand{\g}{\gamma}
\renewcommand{\L}{\mathbb{L}}
\newcommand{\N}{\mathbb{N}}
\newcommand{\Z}{\mathbb{Z}}
\newcommand{\R}{\mathbb{R}}
\newcommand{\C}{\mathbb{C}}
\newcommand{\sD}{\mathscr{D}}
\newcommand{\SC}{\mathcal{C}}
\newcommand{\La}{\Lambda}
\newcommand{\la}{\lambda}
\newcommand{\dd}{\partial}
\newcommand{\sse}{\subset}
\newcommand{\lr}{\longrightarrow}
\newcommand{\HH}{\operatorname{HH}}
\newcommand{\Br}{\operatorname{Br}}
\newcommand{\Aut}{\operatorname{Aut}}
\newcommand{\wt}{\widetilde}
\newcommand{\Lag}{\operatorname{Lag}}
\newcommand{\Seed}{\operatorname{Seed}}
\def\Op{{\mathcal O}{\it p}}
\newcounter{daggerfootnote}
\newcommand{\bG}{\mathbb{G}}
\newcommand{\bL}{\mathbb{L}}
\newcommand{\cC}{\mathcal{C}}
\def \vertbar [#1](#2,#3,#4){
    \draw [#1] (#2,#3) -- (#2,#4);
    \draw [fill=white] (#2,#3) circle [radius=0.1];
    \draw [fill=black] (#2,#4) circle [radius=0.1];
}
\providecommand{\leftsquigarrow}{%
  \mathrel{\mathpalette\reflect@squig\relax}%
}
\newcommand{\reflect@squig}[2]{%
  \reflectbox{$\m@th#1\rightsquigarrow$}%
}
\def\Ddots{\mathinner{\mkern1mu\raise\p@
\vbox{\kern7\p@\hbox{.}}\mkern2mu
\raise4\p@\hbox{.}\mkern2mu\raise7\p@\hbox{.}\mkern1mu}}
\def \horline [#1](#2,#3,#4){
    \draw [#1] (#2,#4) -- (#3,#4);
    \draw [fill=white] (#2,#4) circle [radius=0.1];
    \draw [fill=black] (#3,#4) circle [radius=0.1];
}
\def \crossing (#1,#2)(#3,#4){
\draw (#1,#2) -- (#3,#4);
\draw (#1,#4) -- (#3,#2);
}
\DeclareFontFamily{U}{mathb}{}
\DeclareFontShape{U}{mathb}{m}{n}{
  <-5.5> mathb5
  <5.5-6.5> mathb6
  <6.5-7.5> mathb7
  <7.5-8.5> mathb8
  <8.5-9.5> mathb9
  <9.5-11.5> mathb10
  <11.5-> mathbb12
}{}
\tikzset{tangent/.style={decoration={markings,mark=at position #1 with {
      \coordinate (tangent point-\pgfkeysvalueof{/pgf/decoration/mark info/sequence number}) at (0pt,0pt);
      \coordinate (tangent unit vector-\pgfkeysvalueof{/pgf/decoration/mark info/sequence number}) at (1,0pt);
      \coordinate (tangent orthogonal unit vector-\pgfkeysvalueof{/pgf/decoration/mark info/sequence number}) at (0pt,1);
      }},postaction=decorate},
    use tangent/.style={
        shift=(tangent point-#1),
        x=(tangent unit vector-#1),
        y=(tangent orthogonal unit vector-#1)
    },
    use tangent/.default=1
    }
\begin{document}

	\title{A Lagrangian filling for every cluster seed}
	
	\subjclass[2010]{Primary: 53D12. Secondary: 57K33, 13F60.}

	\author{Roger Casals}
	\address{University of California Davis, Dept. of Mathematics, USA}
	\email{casals@math.ucdavis.edu}
	 
	\author{Honghao Gao}
	\address{Tsinghua University, Yau Mathematical Sciences Center, Beijing, China}
	\email{gaohonghao@tsinghua.edu.cn}

\maketitle

\vspace{-1cm}
\begin{abstract}
We show that each cluster seed in the augmentation variety contains an embedded exact Lagrangian filling. This resolves the matter of surjectivity of the map from Lagrangian fillings to cluster seeds. The main new technique to produce these Lagrangian fillings is the construction and study of a quiver with potential associated to curve configurations. We prove that its deformation space is trivial and show how to use it to manipulate Lagrangian fillings with $\L$-compressing systems via Lagrangian disk surgeries.\\
\end{abstract}


\section{Introduction}\label{sec:intro}

We show that each cluster seed contains an embedded exact Lagrangian filling. Heretofore, the surjectivity of the map from Lagrangian fillings to cluster seeds remained open for essentially all braids. The argument is based on applying Lagrangian disk surgeries to an initial Lagrangian filling with an $\L$-compressing system. We are able to avoid the appearance of immersed curves, a known technical issue of this problem, by introducing and studying a new object: a quiver with potential for each such $\L$-compressing system. The manuscript first establishes the key properties of these new quivers with potentials, including their rigidity and invariance. We then show how to use these properties to construct a Lagrangian filling for every cluster seed.\\

{\bf Scientific context}. The study of Lagrangian submanifolds has been central to some of the developments in symplectic topology, see for instance \cite{Gromov85,OS04,Seidel08,FOOO09, Abouzaid12,Auroux15}. Correspondingly, the study of Lagrangian fillings of Legendrian knots has played an important role in low-dimensional contact topology, see e.g.~\cite{EliashbergPolterovich96,EHK,EkholmLekili23}. For instance, in many interesting cases, Weinstein manifolds can be studied through Lagrangian skeleta (cf.~e.g.~\cite{CieliebakEliashberg12,RSTZ14,CasalsMurphy}) and those can be constructed and studied using Lagrangian fillings (see e.g.~\cite{BourgeoisEkholmEliashberg12,CasalsLagSkel}). From an algebraic viewpoint, the study of both Floer-theoretic and sheaf-theoretic invariants of Legendrian links is tightly connected to Lagrangian fillings, see e.g.~\cite{Chekanov,EkholmEtnyreSullivan05b,EkholmEtnyreSullivan05a,GKS_Quantization,EkholmEtnyreNgSullivan13,EHK,STZ_ConstrSheaves,STWZ}. More recently, techniques developed to study Lagrangian fillings (see \cite{CasalsZaslow,CasalsWeng22}) were applied to solve some problems in the area of cluster algebras, including the construction of cluster algebras in Richardson varieties (\cite[Theorem 1.1]{CGGLSS}) and the proof that the twist map for positroids equals the Donaldson-Thomas transformation (\cite[Theorem B]{CLSW23}).

Let $\beta$ be a positive braid word. Consider the Legendrian link $\La_{\beta}\sse(\R^3,\xi_{st})$ obtained as the Legendrian lift of the rainbow closure of $\beta$, as defined in \cite[Section 2.2]{CasalsNg} or cf.~Section \ref{ssec:prelim_symp}. Here $\xi_{st}=\ker\{dz-ydx\}$ where $(x,y,z)\in\R^3$ are Cartesian coordinates.

Consider its augmentation variety $X(\La_\beta,T)$, where $T\sse\La_\beta$ is a set of marked points, one per component. See Section \ref{ssec:aug} or \cite[Section 5.1]{CasalsNg} and references therein for more details on such augmentation varieties. This affine algebraic variety $X(\La_\beta,T)$ is smooth and isomorphic to the open (double) Bott-Samelson variety with pair of braids $(\beta,e)$, where $e$ is the identity braid, up to trivial $\C^\times$-factors. It is known that its ring of regular functions $\C[X(\La_\beta,T)]$ is a cluster algebra. See \cite{ShenWeng,GSW} for both these facts or cf.~Section \ref{ssec:aug} below. An alternative, intrinsically symplectic construction of such cluster structures is provided in \cite{CasalsWeng22} via the microlocal theory of sheaves.


A salient property of this cluster algebra structure constructed on $\C[X(\La_\beta,T)]$ is that, in all known cases, an oriented embedded exact Lagrangian filling $L$ of $\La_\beta\sse(\R^3,\xi_{st})$, embedded in the symplectization $(\R^4,\la_{st})$ of $(\R^3,\xi_{st})$, gives a cluster seed in $\C[X(\La_\beta,T)]$. Specifically, the filling $L$ gives an open toric chart in $X(\La_\beta,T)$ and there is a choice of $\L$-compressing system $\Gamma$ for $L$ that endows this toric chart with cluster $\mathcal{A}$-coordinates, thus defining a seed $\mathfrak{c}(L,\Gamma)$ in $\C[X(\La_\beta,T)]$. The description of the cluster seed is explicit for Lagrangian fillings $L$ associated to pinching sequences of $\beta$, see \cite{CasalsNg} and \cite{GSW}, and \cite{CasalsZaslow} provides a diagrammatic calculus to describe more general cluster seeds. 

Note that these embedded exact Lagrangians $L$ for $\La_\beta$  are typically surfaces of higher genus. In addition, if $L$ and $L'$ are compactly supported Hamiltonian isotopic, then the toric charts in $X(\La_\beta,T)$ associated to these seeds must be equal. Again, see \cite{EHK,GSW} or \cite{CasalsWeng22}. Such an invariance property has been successfully used to distinguish Lagrangian fillings, cf.~\cite{CasalsHonghao,CasalsZaslow,GSW}.\\

{\bf The Question}. Let $\Lag(\La_\beta)$ be the set of Hamiltonian isotopy classes of embedded exact Lagrangian fillings of $\La_\beta$ in the symplectization $(\R^4,\la_{st})$ of $(\R^3,\xi_{st})$. A Hamiltonian isotopy class is given by the equivalence relation $L\sim L'$ iff there exists a compactly supported Hamiltonian diffeomorphism $\varphi\in\mbox{Ham}^c(\R^4,\la_{st})$ such that $\varphi(L)=L'$. By \cite[Section 3.5]{EHK}, cf.~also \cite[Theorem 3.6]{CasalsNg}, there exists a map $\mathfrak{C}^\circ:\Lag(\La_\beta)\lr \mbox{Toric}(X(\La_\beta,T))$, where $\mbox{Toric}(X(\La_\beta,T))$ is the set of open unparametrized algebraic toric charts $(\C^\times)^d\sse X(\La_\beta,T)$ in the affine variety $X(\La_\beta,T)$, where $d=\dim_\C X(\La_\beta,T)$. An $\L$-compressing system $\Gamma$ for a Lagrangian filling $L\in\Lag(\La_\beta)$ endows the toric chart $\mathfrak{C}^\circ(L)$ with toric coordinates $A(\Gamma)$, e.g.~ see \cite[Section 4.6]{CasalsWeng22}. ($\L$-compressing systems will be momentarily defined, see also Definition \ref{def:compressible}.) Let $\Lag^c(\La_\beta)$ be the set of pairs $(L,\Gamma)$ consisting of a Lagrangian filling in $L\in\Lag(\La_\beta)$ and an $\L$-compressing system $\Gamma$ for $L$, up to Hamiltonian isotopy, satisfying the condition that $A(\Gamma)$ are cluster coordinates for a cluster seed of the cluster algebra structure in $\C[X(\La_\beta,T)]$ above. Finally, let $\Seed(X(\La_\beta,T))$ be the set of cluster seeds in $\C[X(\La_\beta,T)]$. In summary, at its coarsest level, the constructions cited above, sending a Lagrangian filling with an $\L$-compressing system $(L,\Gamma)$ to the cluster seed $\mathfrak{c}(L,\Gamma):=(\mathfrak{C}^\circ(L),A(\Gamma
))$, yields a map of sets:
$$\mathfrak{C}:\Lag^c(\La_\beta)\lr\Seed(X(\La_\beta,T)),\quad (L,\Gamma)\mapsto \mathfrak{C}(L,\Gamma):=\mathfrak{c}(L,\Gamma).$$


In our view, the surjectivity and injectivity of this map $\mathfrak{C}$ is a central open problem in low-dimensional contact and symplectic topology. It lies at the core of the study of Legendrian knots in $(\R^3,\xi_{st})$ and, more generally, the symplectic topology of Weinstein 4-manifolds. Note that \cite[Prop.~5.3]{CaoKellerQin22} implies that the forgetful map $\iota:\Seed(X(\La_\beta,T))\lr\mbox{Toric}(X(\La_\beta,T))$ sending a cluster seed to its underlying toric chart is injective. Thus, surjectivity of $\mathfrak{C}$ would imply that $\mathfrak{C}^\circ$ surjects onto $\iota(\Seed(X(\La_\beta,T)))$.

The state of affairs is as follows:

\begin{itemize}
    \item[-] Injectivity of $\mathfrak{C}$ is a generalization of the nearby Lagrangian conjecture for surfaces with boundary to a statement about embedded exact Lagrangians in Weinstein neighborhoods of Lagrangian skeleta. At core, it states that any embedded exact Lagrangian in a neighborhood of the arboreal Lagrangian skeleton consisting of $L$ and Lagrangian disks attached to it is either Hamiltonian isotopic to $L$ or to those Lagrangians obtained from it by Lagrangian surgeries along the disks in the skeleton. By \cite{EliashbergPolterovich96}, $\mathfrak{C}$ is injective if $\La_\beta$ is the max-tb Legendrian unknot. Injectivity of $\mathfrak{C}$ remains open for any other Legendrian link $\La_\beta\sse(\R^3,\xi_{st})$.\\

    \item[-] Surjectivity of $\mathfrak{C}$ is a reconstruction statement, from an algebraic invariant back to actual 4-dimensional symplectic topology. Indeed, it inputs algebraic data, provided by the ring of functions $\C[X(\La_\beta,T)]$ and a seed for its cluster structure, and should output symplectic topological data, an embedded exact Lagrangian filling of $\La_\beta$ and an $\L$-compressing system.\\
    
    By the finite type classification of cluster algebras, established in \cite{FominZelevinsky_ClusterII}, $\Seed(X(\La_\beta,T))$ is known to be a finite set only for a few exceptional cases, i.e.~the ADE cases. For those ADE cases, $\mathfrak{C}$ can be verified to be surjective by direct computation: this has recently been established in \cite{Hughes21D,AnBaeLee21ADE} by using weaves \cite{CasalsZaslow}. See also \cite{AnBaeLee21DEtilde} for the affine ADE cases where $\Seed(X(\La_\beta,T))$ is still finite up to the natural tame quotient. These finite type and affine type are rare: essentially all braids $\beta$ have $\Seed(X(\La_\beta,T))$ be an infinite set,  cf.~\cite[Section 4]{GSW} or \cite[Section 5]{CasalsLagSkel}. Confer \cite{CasalsHonghao,CasalsNg,CasalsWeng22,STWZ,STW} and references therein for partial results on $\mathfrak{C}$ and \cite[Section 5]{CasalsLagSkel} for further discussions. Surjectivity of $\mathfrak{C}$ remains open for any other Legendrian link $\La_\beta\sse(\R^3,\xi_{st})$.
\end{itemize}

There has been significant activity in recent times in the study of Lagrangian fillings, see e.g.~\cite{AnBaeLee21ADE,AnBaeLee21DEtilde,orsola,CasalsHonghao,CasalsNg,CasalsLi22,CGGLSS,CasalsWeng22,CasalsZaslow,EHK,GSW,Golovko,Hughes21D,Hughes_Atype,STWZ}. These results use techniques from and connecting to the microlocal theory of sheaves, Floer theory and cluster algebras among others. That said, all fall objectively short of showing anything close to the surjectivity of $\mathfrak{C}$. This manuscript introduces a genuinely new technique: the definition, study and use of a quiver with potential $(Q,W)$ to construct Lagrangian fillings.\\

{\bf The Main Result}. The goal of this manuscript is to establish the surjectivity of the map $\mathfrak{C}$. In order to state the main result, which is a stronger version of surjectivity, we introduce the following concepts. Let $L\sse(\R^4,\la_{st})$ be an exact oriented Lagrangian filling, $\g\sse L$ an embedded oriented curve and $\La_\g$ its Legendrian lift to the ideal contact boundary $\dd\Op(L)$ of a convex open neighborhood $\Op(L)$ of $L$ in $\R^4$. Note that $\Op(L)$ is symplectomorphic to a convex neighborhood of the zero section in $(T^*L,\la_{st})$. By definition, $\g$ is said to be $\bL$-compressible if there exists a properly embedded Lagrangian 2-disk $D\sse (T^*\R^2\setminus \Op(L))$ such that $\dd\overline{D}\cap \dd\Op(L)=\La_\g\sse\R^4$ and the union of $\overline{D}\cup \nu_\gamma$ is a smooth Lagrangian disk, where $\nu_\gamma\sse\Op(L)$ is the Lagrangian conormal cone of $\gamma$.

A collection $\Gamma=\{\gamma_1,\ldots,\gamma_b\}$ of oriented embedded curves in $L$, with a choice of $\mathbb{L}$-compressing disks $\sD(\Gamma)=\{D_1,\ldots,D_b\}$, one for each curve, is said to be an $\L$-compressing system for $L$ if $D_i\cap D_j=\emptyset$ for all $i,j\in[b]$ and the homology classes of the curves in $\Gamma$ form a basis of $H_1(L;\Z)$. Here we use the notation $[n]$ for the set $\{1,\ldots,n\}$, where $n\in\N$ is a natural number. Two $\mathbb{L}$-compressing systems $\Gamma,\Gamma'$ for $L$ are said to be equivalent if there exists a sequence of triple point moves and bigon moves, i.e.~Reidemeister IIIs and non-dangerous tangencies, that applied to the curves in $\Gamma$ lead to the curves in $\Gamma'$. See Section \ref{sec:curves} for further discussions and cf.~Figure \ref{fig:Moves} and Figure \ref{fig:Moves2} for such moves. Given a Lagrangian filling $L$ with an $\L$-compressing system $\Gamma$ and a disk $D\in\sD(\Gamma)$, Lagrangian disk surgery produces an embedded exact Lagrangian filling $\mu_D(L)$. For details on Lagrangian disk surgeries, see Section \ref{sec:Symplectic} below or cf.~\cite{Polterovich_Surgery,MLYau17}.

A priori, it is not clear whether $\mu_D(L)$ inherits an $\L$-compressing system from $(L,\Gamma)$: curves in $\Gamma$ might become immersed under Lagrangian disk surgery. For instance, see Example \ref{ex:Example_Immersed} in Section \ref{ssec:effectLag} and adjacent discussion. Therefore, the set of curves $\mu_D(\Gamma)$ obtained from $\Gamma$ after Lagrangian disk surgery on $D$ might not be an $\L$-compressing system. This is a known problem in this approach, cf.~Section \ref{ssec:iteration} for more details. The main technical achievement of this manuscript is to show that this can be corrected, showing that certain $\L$-compressing systems persist under arbitrary sequences of Lagrangian disk surgeries. The quiver with potential that we introduce and study is a crucial ingredient for implementing this correction.

From the previous subsection, we recall that \cite{GSW} constructs a cluster algebra structure in $\C[X(\La_\beta,T)]$, cf.~also \cite{CasalsWeng22,ShenWeng}. It has the property that, in all cases relevant for this article, an embedded exact Lagrangian filling $L$ for $\La_\beta$ endowed with an $\L$-compressing system $\Gamma$ defines a cluster seed $\mathfrak{c}(L,\Gamma)$ for that cluster structure. In particular, it defines a quiver $Q(\mathfrak{c}(L,\Gamma))$ for that cluster seed $\mathfrak{c}(L,\Gamma)$. In addition, the cluster structure is such that the mutable (a.k.a.~unfrozen) vertices of $Q(\mathfrak{c}(L,\Gamma))$ are in bijection with the curves of the $\L$-compressing system $\Gamma$. The frozen vertices of $Q(\mathfrak{c}(L,\Gamma))$ are related to base points and have no significant role in this article, cf.~\cite{CasalsWeng22,GSW} for details on frozens.

The core result of the manuscript is the following theorem, which implies the surjectivity of $\mathfrak{C}$ for all positive braids $\beta$. The entire article is devoted to developing a new technique that proves this result.

\begin{thm}\label{thm:main}
Let $\La_\beta\sse(\R^3,\xi_{st})$ be the Legendrian link associated to a positive braid word $\beta$, and $T\sse\La_\beta$ a set of marked points with one marked point per component.

Then there exists an embedded exact Lagrangian filling $L\sse(\R^4,\la_{st})$ of $\La_\beta$ and an $\mathbb{L}$-compressing system $\Gamma$ for $L$ such that the following holds:

\begin{itemize}
    \item[(i)] If $\mu_{v_\ell}\ldots\mu_{v_1}$ is any sequence of mutations, where $v_1,\ldots,v_\ell$ are mutable vertices of the quiver $Q(\mathfrak{c}(L,\Gamma))$ associated to the cluster seed $\mathfrak{c}(L,\Gamma)$ of $L$ in $\C[X(\La_\beta,T)]$, then there exists a sequence of embedded exact Lagrangian fillings $L_k$ of $\La_\beta$, each equipped with an $\mathbb{L}$-compressing system $\Gamma_k$, with associated cluster seeds
    $$\mathfrak{c}(L_k,\Gamma_k)=\mu_{v_k}\ldots\mu_{v_1}(\mathfrak{c}(L,\Gamma))$$
    in $\C[X(\La_\beta,T)]$, for all $k\in[\ell]$.\\

    \item[(ii)] Each $\mathbb{L}$-compressing system $\Gamma_k$ for $L_k$ is such that Lagrangian disk surgery on $L_k$ along any Lagrangian disk in $\sD(\Gamma_k)$ yields an $\mathbb{L}$-compressing system. In addition, $\Gamma_{k+1}$ is equivalent to such an $\mathbb{L}$-compressing system via a sequence of triple point moves and local bigon moves.\hfill$\Box$\\

\end{itemize}
\end{thm}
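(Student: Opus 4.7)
The plan is to argue by induction on the length $\ell$ of the mutation sequence, using the quiver with potential $(Q,W)$ attached to an $\L$-compressing system as the bookkeeping device that controls how Lagrangian disk surgery interacts with the curves on the filling. For the base case $\ell=0$, I would take $L$ to be the embedded exact Lagrangian filling of $\La_\beta$ arising from a pinching sequence of $\beta$, equipped with the explicit $\L$-compressing system $\Gamma$ from \cite{CasalsNg,GSW}; the associated cluster seed $\mathfrak{c}(L,\Gamma)$ is then one of the standard Bott--Samelson seeds of the cluster structure on $\C[X(\La_\beta,T)]$, so the required initial pair is in hand.

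For the inductive step, assume $(L_k,\Gamma_k)$ with $\mathfrak{c}(L_k,\Gamma_k)=\mu_{v_k}\cdots\mu_{v_1}(\mathfrak{c}(L,\Gamma))$ has been built. Perform Lagrangian disk surgery on $L_k$ along the disk $D_{v_{k+1}}\in\sD(\Gamma_k)$ dual to the next vertex, obtaining an embedded exact Lagrangian filling $L_{k+1}:=\mu_{D_{v_{k+1}}}(L_k)$ together with a candidate collection of curves $\mu_{D_{v_{k+1}}}(\Gamma_k)$. The known pathology is that some of these curves may become immersed, so that the candidate is not yet an $\L$-compressing system. Here the quiver with potential intervenes: the pair $(L_k,\Gamma_k)$ determines a QP $(Q_k,W_k)$, and a local model computation near the surgery disk shows that the surgered curves determine a QP right-equivalent to the Derksen--Weyman--Zelevinsky mutation $\mu_{v_{k+1}}(Q_k,W_k)$. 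The rigidity of $(Q_k,W_k)$, i.e.~the triviality of its deformation space established earlier in the paper, forces this mutated QP to be uniquely determined up to right-equivalence, and the monomials of its reduced potential record precisely the bigons and triangles obstructing the candidate from being an $\L$-compressing system.

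I would then remove each immersed intersection of $\mu_{D_{v_{k+1}}}(\Gamma_k)$ by the finite sequence of triple point moves and local bigon moves dictated by the reduced potential, producing an honest $\L$-compressing system $\Gamma_{k+1}$ for $L_{k+1}$ that differs from $\mu_{D_{v_{k+1}}}(\Gamma_k)$ only by such moves; this yields condition (ii). Since the cluster $\A$-coordinates attached to an $\L$-compressing system transform by a cluster mutation under Lagrangian disk surgery, the identity $\mathfrak{c}(L_{k+1},\Gamma_{k+1})=\mu_{v_{k+1}}\mathfrak{c}(L_k,\Gamma_k)$ then closes the induction and yields (i). The main obstacle I expect is the last, combinatorial-topological, step of actually executing the curve corrections: one must verify that the list of moves prescribed by the mutated potential is finite, preserves embeddedness and disjointness of the $\L$-compressing disks, and keeps the curve classes a $\Z$-basis of $H_1(L_{k+1};\Z)$. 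This is precisely where the invariance of $(Q,W)$ under triple point and bigon moves, together with the triviality of its deformation space, are indispensable, since they rule out higher-order obstructions beyond the explicit relations encoded in $W_{k+1}$.
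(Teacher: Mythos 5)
Your overall strategy, inducting on the mutation sequence and using the curve quiver with potential as the bookkeeping device that lets Lagrangian disk surgery realize each algebraic mutation, is indeed the paper's strategy, but the inductive step as you phrase it has a genuine gap in its logical order. You allow the surgered collection $\mu_{D_{v_{k+1}}}(\Gamma_k)$ to contain immersed curves and then propose to repair them afterwards by triple point and local bigon moves ``dictated by the reduced potential.'' This cannot work: triple point and local bigon moves are moves on configurations of \emph{embedded} curves, a curve that has acquired a self-intersection under the $\g$-exchange cannot be made embedded by them, the curve QP is not even defined for such a collection, and Lagrangian disk surgery along a disk with immersed boundary is not available (Remark \ref{rmk:immersed}), so the induction would stall at that point. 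The actual argument must ensure that immersed curves never appear in the first place. This is exactly what non-degeneracy (a consequence of rigidity, Proposition \ref{thm:QPnondeg}) buys: before each surgery the configuration is reduced and its QP non-degenerate, hence its quiver has no $2$-cycles; no $2$-cycle through the surgery vertex means every other curve meets $\g$ with intersections all of one sign, so the $\g$-exchange of Lemma \ref{lem:configuration_Lagrangiansurgery} keeps every curve embedded (Proposition \ref{prop:iterative_step}). The triple point and local bigon moves enter only \emph{after} the surgery, via the Hass--Scott algorithm (Theorem \ref{thm:HassScott}), to remove the honest bigons the exchange may create; Lemma \ref{lem:reduced_parts} together with non-degeneracy (which is preserved under QP-mutation because rigidity is) then guarantees the reduced configuration again has no $2$-cycles, so the next surgery can be performed. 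In short, the potential is not a recipe for undoing immersed points after the fact; it is the certificate, available before each surgery, that none will occur.

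A secondary mismatch is your base case. Rigidity is established in the paper only for the curve configurations $\SC(\bG)$ of plabic fences, and the proof of Theorem \ref{thm:main} therefore starts from the conjugate-surface filling $L_\beta$ with the $\L$-compressing system $\Gamma(\beta)$ of \cite{CasalsWeng22}, whose configuration is $\SC(\bG(\beta))$ by construction. If you instead start from a pinching-sequence filling with the system of \cite{CasalsNg,GSW}, the phrase ``rigidity established earlier in the paper'' does not automatically apply: you would need to identify that system, curves and disks, with $\Gamma(\beta)$ up to the allowed equivalences, or prove rigidity and non-degeneracy of its curve QP separately. Likewise, the identification $\mathfrak{c}(L_{k+1},\Gamma_{k+1})=\mu_{v_{k+1}}\mathfrak{c}(L_k,\Gamma_k)$ is supplied by Theorem \ref{thm:cluster}, which is stated for fillings obtained from $(L_\beta,\Gamma_\beta)$ by sequences of Lagrangian disk surgeries, so anchoring the induction at that particular initial pair is not merely a convenience but what makes the seed-tracking step legitimate.
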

\color{black}
Theorem \ref{thm:main} is a reconstruction result that we prove by first introducing a new object: a quiver with potential $(Q(\SC),W(\SC))$ associated to a curve configuration $\SC$. The algebraic notion of a quiver with potential\footnote{The word potential has different meanings in the literature. Here a quiver with potential is meant in the sense of \cite{DWZ}. It is unrelated to the potentials from \cite{PascaleffTonkonog20}, for instance, which count holomorphic disks bounded by monotone Lagrangians and thus vanish for exact Lagrangians.} was introduced in \cite{DWZ}. The curve configurations that we use for Theorem \ref{thm:main} can be extracted from the Legendrian link $\La_\beta$ and its initial Lagrangian filling $L$. Intuitively, this quiver with potential encodes an arboreal skeleton for the Weinstein relative pair $(\C^2,\La_\beta)$, cf.~\cite[Section 2]{Eliashberg18_Revisited} or \cite[Section 2]{CasalsLagSkel}.

In a nutshell, Theorem \ref{thm:main} states that we can arbitrarily perform Lagrangian disk surgeries to the initial Lagrangian filling $L$ of $\La_\beta$ so as to construct new Lagrangian fillings reaching any cluster seed. Sections \ref{sec:curves} and \ref{sec:Symplectic} explain how these Lagrangian surgeries induce a mutation to the associated quivers with potential. See \cite[Definition 5.5]{DWZ} or Section \ref{sssec:DWZ} below for the definition of mutation in this context. In general, quivers with potential cannot be arbitrarily mutated. Nevertheless, \cite[Corollary 8.2]{DWZ} shows that quivers with potential that are rigid can be arbitrarily mutated. See \cite[Section 6.10]{DWZ} or Section \ref{ssec:nondeg_def} below for the definition of rigid. Our results will show that the right-equivalence class of this particular quiver with potential $(Q(\SC),W(\SC))$ associated to $\La_\beta$ is rigid, it is invariant under triple point moves and bigon moves, and changes according to a mutation (of a quiver with potential) under Lagrangian disk surgeries.



\color{black}
Thanks to the constructions in Sections \ref{sec:curves} and \ref{sec:QP_PlabicFence}, we are able to use this rigidity and properties in Section \ref{sec:Symplectic} to show that we can geometrically realize any arbitrary sequence of algebraic mutations in the cluster algebra $\C[X(\La_\beta,T)]$ by a sequence of Lagrangian disk surgeries on that arboreal skeleton for $(\C^2,\La_\beta)$. Near the entirety of the manuscript is devoted to the construction, study and use of this new quiver with potential.

\begin{remark}\label{rmk:Weinstein} Each embedded exact Lagrangian filling $L$ in Theorem \ref{thm:main} gives a closed embedded exact Lagrangian surface $\overline{L}$ in $W(\La_\beta)$, the Weinstein 4-fold obtained by attaching a Weinstein 2-handle to each component of $\La_\beta$. If the cluster seeds of $L,L'$ are different, then $\overline{L}$ and $\overline{L'}$ are not Hamiltonian isotopic in $W(\La_\beta)$, only possibly Lagrangian isotopic, cf.~\cite[Proposition 7.11]{CasalsNg}. Theorem \ref{thm:main} shows that each of these (typically infinitely many) different closed exact Lagrangians in $W(\La_\beta)$ is embedded in a different closed arboreal Lagrangian skeleton, cf.~\cite[Section 2.4]{CasalsLagSkel}. Note also that for each $(L_k,\Gamma_k)$,  \cite[Theorem 1.13]{GPS2} implies that the object $\mathscr{L}_k:=C_1\oplus\ldots\oplus C_{\pi_0(\La_\beta)}\oplus T^*_{p_1}D_1\oplus\ldots\oplus T^*_{p_b}D_b$ is a compact generator of the wrapped Fukaya category of $W(\La_\beta)$, where $p_i\in D_i$ are interior points, $T^*_{p_i}D_i$ the local cotangent fibers and $C_j$ are the co-cores of the Weinstein 2-handles. Therefore, Theorem \ref{thm:main} geometrically constructs a compact generator for each vertex of the cluster exchange graph of the cluster algebra $\C[X(\La_\beta,T)]$. In particular, when the dg-algebras $\mbox{End}(\mathscr{L}_k)$ are non-positively graded, Theorem \ref{thm:main} geometrically constructs bounded $t$-structures for the wrapped Fukaya category.\hfill$\Box$
\end{remark}

{\bf The map is surjective.} The construction of the cluster algebra structure for $\C[X(\La_\beta,T)]$ has the following property, relating Theorem \ref{thm:main} to the map $\mathfrak{C}$; cf.~\cite{CasalsWeng22,GSW} or Section \ref{ssec:aug} below. Given a Lagrangian filling $L$ of $\La_\beta$ and an $\L$-compressing system $\Gamma$, the cluster variables in $\mathfrak{c}(L,\Gamma)$ are indexed by the curves in $\Gamma$ and the arrows in the quiver of $\mathfrak{c}(L,\Gamma)$ record geometric intersections of curves in $\Gamma$. The cluster variables are described by a microlocal parallel transport, cf.~\cite[Section 4]{CasalsWeng22}. In addition, the cluster seed $\mu_v(\mathfrak{c}(L,\Gamma))$ obtained by algebraically mutating at a vertex $v=\gamma$ of the quiver, indexed by some $\gamma\in\Gamma$, is precisely $\mathfrak{c}(\mu_{D_\gamma}(L,\Gamma))$, where $D_\gamma$ is the $\L$-compressing disk associated to the curve $\gamma\in\Gamma$. Therefore, Theorem \ref{thm:main} implies the desired surjectivity:

\begin{cor}\label{cor:main}
Let $\La_\beta\sse(\R^3,\xi_{st})$ be the Legendrian link associated to a positive braid word $\beta$, $T\sse\La_\beta$ a set of marked points, one per component, and $X(\La_\beta,T)$ its augmentation variety. Then 
$$\mathfrak{C}:\Lag^c(\La_\beta)\lr\Seed(X(\La_\beta,T))$$
is surjective, i.e.~ each cluster seed is induced by an embedded exact Lagrangian filling endowed with an $\L$-compressing system.
\end{cor}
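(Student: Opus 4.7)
The plan is to deduce the corollary directly from Theorem \ref{thm:main}. Take the embedded exact Lagrangian filling $L$ and $\L$-compressing system $\Gamma$ provided by Theorem \ref{thm:main} and set $\mathfrak{c}_0 := \mathfrak{c}(L,\Gamma) \in \Seed(X(\La_\beta,T))$; by construction $(L,\Gamma) \in \Lag^c(\La_\beta)$ and $\mathfrak{C}(L,\Gamma) = \mathfrak{c}_0$, so at least the initial seed already lies in the image of $\mathfrak{C}$.

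For an arbitrary target seed $\mathfrak{s} \in \Seed(X(\La_\beta,T))$, the connectedness of the exchange graph of the cluster algebra $\C[X(\La_\beta,T)]$ — tautological from the definition of the seed set — supplies a finite sequence $\mu_{v_\ell} \cdots \mu_{v_1}$ of mutations at mutable vertices of the successive quivers starting from $\mathfrak{c}_0$ with $\mu_{v_\ell} \cdots \mu_{v_1}(\mathfrak{c}_0) = \mathfrak{s}$. Feeding precisely this sequence into Theorem \ref{thm:main}(i) produces, for each $k \in [\ell]$, an embedded exact Lagrangian filling $L_k$ equipped with an $\L$-compressing system $\Gamma_k$ such that $(L_k,\Gamma_k) \in \Lag^c(\La_\beta)$ and $\mathfrak{c}(L_k,\Gamma_k) = \mu_{v_k} \cdots \mu_{v_1}(\mathfrak{c}_0)$. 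Taking $k = \ell$ yields $\mathfrak{C}(L_\ell,\Gamma_\ell) = \mathfrak{s}$, exhibiting $\mathfrak{s}$ in the image of $\mathfrak{C}$ and proving surjectivity.

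All of the genuine difficulty has been absorbed into Theorem \ref{thm:main}. At the level of the corollary itself there is no substantive obstacle: the argument is a formal combination of (a) the connectedness of the cluster exchange graph, (b) the dictionary recalled in Section \ref{ssec:aug} identifying Lagrangian disk surgery along $D_\gamma$ with cluster mutation at the vertex labelled by $\gamma$, and (c) Theorem \ref{thm:main}(i), which promotes an arbitrary algebraic mutation sequence issuing from the initial seed into an honest sequence of Lagrangian disk surgeries whose output still carries an $\L$-compressing system inducing the mutated seed. The hard part — namely arranging an initial pair $(L,\Gamma)$ whose $\L$-compressing system persists under every conceivable sequence of disk surgeries, avoiding the creation of immersed curves — is exactly what the quiver-with-potential technology of the body of the paper is designed to accomplish.
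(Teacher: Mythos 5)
Your argument is correct and is essentially the paper's own: the paper likewise deduces the corollary from Theorem \ref{thm:main} by noting that every seed of $\C[X(\La_\beta,T)]$ is reachable from $\mathfrak{c}(L,\Gamma)$ by a finite mutation sequence and that Theorem \ref{thm:main}(i), together with the identification of Lagrangian disk surgery with cluster mutation, realizes that sequence geometrically. Your slightly more explicit appeal to the (tautological) mutation-connectedness of the seed set is exactly the implicit step in the paper's one-line deduction.
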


We conjecture that the map $\mathfrak{C}$ is injective. Corollary \ref{cor:main} and a proof that $\mathfrak{C}$ is injective would settle the core symplectic aspects of the classification of Hamiltonian isotopy classes of Lagrangian fillings for $\La_\beta$, proving that they are equivalent to studying a class of cluster algebras, an algebraic matter.\\


{\bf Acknowledgements}. We are grateful to Mikhail Gorsky, James Hughes, Daping Weng and Alex Wright for useful discussions. We thank the referees for their comments and suggestions. We also thank Joel Hass for kindly explaining his work with P.~Scott to us. R.~Casals is supported by the NSF CAREER DMS-1942363, a Sloan Research Fellowship of the {\color{black} Alfred P. Sloan Foundation} and a UC Davis College of L\&S Dean's Fellowship. Sloan Foundation. H.~Gao is supported by a Tsinghua University start-up grant and {\color{black}the Tsinghua University Dushi Program}.

{\bf Notation}. We denote by $[n]$ the set $\{1,\ldots,n\}$, as indicated above. The group of compactly supported diffeomorphisms of a smooth manifold $\Sigma$ is denoted by $\mbox{Diff}^c(\Sigma)$. In this article, a quiver will refer to a multidigraph with no loops, but possibly with 2-cycles. The set of vertices of $Q$ is denoted by $Q_0$ and its set of arrows by $Q_1$. We often abbreviate {\it quiver with potential} to QP, as in \cite{DWZ}. For specificity, we use the ground ring $R=\C$ for the (complete) path algebra of the quiver.

{\bf Structure of the manuscript}. The study and use of our quiver with potential (QP) is as follows:
\begin{itemize}
    \item[(i)] Section \ref{sec:curves} introduces and develops the new concept: the quiver with potential $(Q(\SC),W(\SC))$ associated to a curve configuration $\SC$. Proposition \ref{lem:triplepoint} and Lemma \ref{lem:bigon} show that the right equivalence class of $(Q(\SC),W(\SC))$ is invariant under triple point moves and local bigon moves. Then Lemma \ref{lem:reduced_parts} and Proposition \ref{prop:quivermutation} show that the QP associated to a reduction of the $\g$-exchange on $\SC$ yields the QP-mutation of $(Q(\SC),W(\SC))$ at the corresponding vertex.\\

    \item[(ii)] Section \ref{sec:QP_PlabicFence} introduces the curve configurations $\SC(\bG)$ associated to plabic fences $\bG$. Proposition \ref{thm:QPnondeg} shows that the curve QP associated to $\SC(\bG)$ is rigid.\\

    \item[(iii)] Section \ref{sec:Symplectic} constructs an initial Lagrangian filling with an $\L$-compressible system whose associated curve configuration is of the form $\SC(\bG)$ for a plabic fence $\bG$. It then develops a few necessary technical results until Section \ref{ssec:proof_main}, where we show how to use the rigidity of the QP for $\SC(\bG)$ to prove Theorem \ref{thm:main}. It also includes Theorem \ref{thm:main_general}, a variant of Theorem \ref{thm:main}.
\end{itemize}

\section{The curve quiver with potential}\label{sec:curves}

Let $\Sigma$ be an oriented surface and $\SC=\{\g_1,\ldots,\g_b\}$, $b\in\N$, a collection of embedded oriented closed connected curves $\g_i\sse\Sigma$, $i\in[b]$, whose pairwise intersections are all transverse. Two such collections $\SC$ and $\SC'$ will be considered equal if there {\color{black} exists} a diffeomorphism $\varphi\in\mbox{Diff}^c(\Sigma)$ such that $\varphi(\SC)=\SC'$. In particular, two such collections $\SC$ and $\SC'$ related by compactly supported isotopies are considered to be equal. We assume that the only intersection points of curves in $\SC$ are double intersection points, i.e.~exactly two curves $\g_i,\g_j\in\SC$ intersect transversely at a given intersection point. From now onward, we always assume that $\Sigma$ is compact or at least of finite topological type.

\begin{definition}\label{def:configuration}
Let $\Sigma$ be an oriented surface and $\SC=\{\g_1,\ldots,\g_b\}$, $b\in\N$, a collection of embedded oriented closed connected curves $\g_i\sse\Sigma$, $i\in[b]$. By definition, $\SC$ is said to be a curve configuration, or simply a configuration, if the classes $[\g_1],\ldots,[\g_b]$ in $H_1(\Sigma;\Z)$ form a basis.\hfill$\Box$ 
\end{definition}

In particular, for a curve configuration $\SC=\{\g_1,\ldots,\g_b\}$ in $\Sigma$, we must have $b=b_1(\Sigma)$. The collections of curves $\SC$ needed to prove Theorem \ref{thm:main} will be configurations, i.e.~consist of $b_1(\Sigma)$ curves whose homology classes span $H_1(\Sigma;\Z)$. Throughout this manuscript, from now onward, we assume all collections of curves $\SC$ that we use satisfy such hypothesis, i.e.~they are curve configurations.


\subsection{The QP $(Q(\SC),W(\SC))$ associated to $\SC$} Let $\SC$ be a curve configuration. Consider the integers
{\color{black} $$A_{ij}(\SC):=|\{p\in\g_i\cap\g_j: \mbox{sign}(p)\mbox{ is positive}\}|\in\N,$$}
where $\mbox{sign}(p)=\pm$ is the sign of the intersection point $p$. 

\begin{definition}
Let $\SC$ be a curve configuration. The quiver $Q(\SC)$ is defined to have vertex set $Q(\SC)_0=\{\g_1,\ldots,\g_b\}$ and the arrow set $Q(\SC)_1$ is given by the condition that the number of arrows from $\g_i$ to $\g_j$ is $A_{ij}(\SC)$, $i,j\in[b]$.\hfill$\Box$
\end{definition}

The quiver $Q(\SC)$ is referred to as the curve quiver of $\SC$. Since the curves in $\SC$ are embedded, the quiver $Q(\SC)$ has no loops; it might nevertheless have 2-cycles. The arrows from the vertex associated to $\g_i$ to that of $\g_j$ are in natural bijection with the positive intersection points $p\in\g_i\cap\g_j$ between $\g_i$ and $\g_j$: we indistinctly identify arrows in $Q(\SC)$ and such intersection points $p\in\Sigma$. Let us now discuss the preliminaries to introduce the potential $W(\SC)$.

\begin{definition} By definition, an $\ell$-gon $P$ bounded by $\SC$, $\ell\in\N$ and $\ell\geq2$, is a closed contractible subset $P\sse\Sigma$ with a piecewise smooth boundary $\dd P$ and embedded interior such that:

\begin{itemize}
    \item[(i)] There are $\ell$ connected components in $\dd P\setminus V$, where $V\sse\dd P$ is the set of non-smooth points of $\dd P$, i.e.~$V$ is the set of vertices of $P$. That is, there are $\ell$ sides to $P$.\\
    
    \item[(ii)] For each smooth connected component $\dd P_j\sse(\dd P\setminus V)$, $j\in[\ell]$, there exists a $\g_{i_j}$ such that $\dd P_j\sse\g_{i_j}$ and the orientations either coincide for all $j\in[\ell]$ or they are opposite for all $j\in[\ell]$. That is, each oriented side of $P$ is an oriented subspace of a curve in $\SC$ or each oriented side of $P$ is an oriented subspace of a curve in $\overline{\SC}$. Here $\overline{\SC}=\{-\g_1,\ldots,-\g_b\}$ denotes the same configuration of curves as in $\SC$ where we have switched the orientation of each curve $\g\in\SC$.\\

    \item[(iii)] At a small enough neighborhood $U\sse\Sigma$ of a vertex $v\in V$, which locally is given by the intersection of two curves $\g_i,\g_j\in\SC$, the intersection $P\cap(U\setminus(U\cap(\g_i\cup\g_j)))$ is a unique quadrant. That is, vertices of an $\ell$-gon only use one of the quadrants; in a combinatorial sense, $\ell$-gons bounded by $\SC$ are convex.\hfill$\Box$
\end{itemize}
\end{definition}

\noindent See Figure \ref{fig:Example_Polygon} for an instance of an $\ell$-gon bounded by a curve configuration $\SC$.

\begin{center}
	\begin{figure}[H]
		\centering
		\includegraphics[scale=0.5]{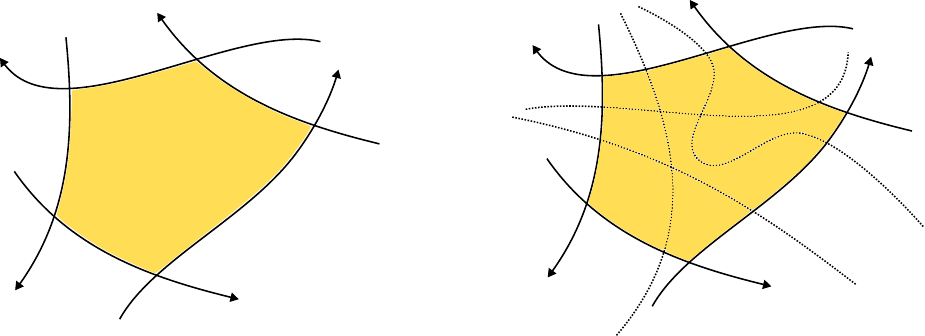}
		\caption{(Left) An example of an $\ell$-gon in a curve configuration $\SC$, with $\ell=5$, drawn in yellow. Pieces of curves in $\SC$ are drawn in black. (Right) A drawing illustrating that an $\ell$-gon for a configuration $\SC$ might have pieces of curves in $\SC$ passing through the interior of the $\ell$-gon.}
		\label{fig:Example_Polygon}
	\end{figure}
\end{center}

\noindent An $\ell$-gon $P$ bounded by $\SC$ determines a cyclically ordered set of vertices. Conversely, it is uniquely determined by its cyclically ordered set of vertices and its orientation, clockwise or counter-clockwise. Since the vertices of $P$ must be intersection points between the curves in $\SC$, which bijectively correspond to arrows in $Q(\SC)$, such $P$ is uniquely determined by a (cyclic) word of composable arrows starting and ending at the same vertex, along with its orientation. In other words, by a signed monomial in $\HH_0(Q(\SC))$, the trace space of the path algebra $\C\langle Q(\SC)\rangle$ of $Q(\SC)$. This correspondence is written $P=v_1\dots v_\ell$ where $v_1,\ldots,v_\ell$ are the vertices of $P$ read according to the order induced by the orientation of $\dd P$.

\begin{remark}
Note that the connected components of $\Sigma\setminus(\g_1\cup\ldots\cup \g_b)$ might or might not be $\ell$-gons bounded by $\SC$, due to the orientations of the curves in $\SC$. Also, typically polygons bounded by $\SC$ are not connected components of $\Sigma\setminus(\g_1\cup\ldots\cup \g_b)$ as they might have curves in $\SC$ cross through them, see e.g.~Figure \ref{fig:Example_Polygon} (right).\hfill$\Box$
\end{remark}

Let $\Gamma_\ell^+$, resp.~$\Gamma_\ell^-$, be the set of $\ell$-gons $P$ bounded by $\SC$ where the orientation of $\dd P$ coincides with, resp.~it is opposite to, the orientations of $\Sigma$.

For each intersection point of two curves $\g_i,\g_j\in\SC$ that represents an arrow from $\g_i$ to $\g_j$ in $Q(\SC)$ we decorate (shade) two consecutive quadrants as follows. The tangent vectors of $\g_i$ and $\g_j$, in this order, are an oriented basis of the tangent space at the intersection point. Then we shade the two quadrants adjacent to the part of $\g_j$ where $\g_j$ points outwards from the intersection point; see Figure \ref{fig:OrientationSigns}, where the shading of these two quadrants is depicted. In other words, the two shaded quadrants are those in the half-plane to the left of $\g_i$, where we traverse $\g_i$ according to its orientation. Similarly, if the intersection point represents an arrow from $\g_j$ to $\g_i$ in $Q(\SC)$, so that the basis $\g_i$ and $\g_j$ (in this order) gives the reverse orientation, then we shade the two quadrants adjacent to the part of $\g_i$ where $\g_i$ points outwards from the intersection point.

\begin{center}
	\begin{figure}[H]
		\centering
		\includegraphics[scale=0.5]{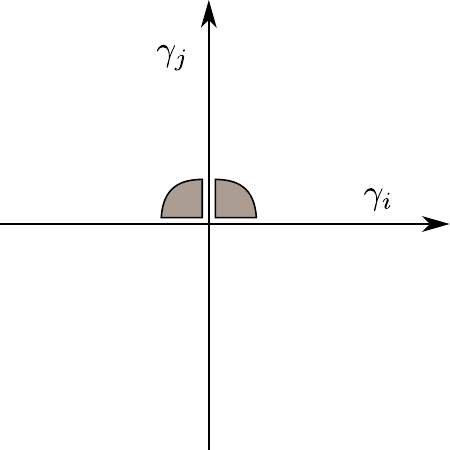}
		\caption{The shading of two quadrants at an intersection point. The two consecutive quadrants that are shaded are the first and the second quadrants, where we take the oriented basis as the two axes.}
		\label{fig:OrientationSigns}
	\end{figure}
\end{center}

Given a polygon $P=v_1\ldots v_\ell$ bounded by $\SC$, each vertex $v_i$ of $P$ is assigned the sign $\sigma(v_i;v_1\ldots v_\ell)=1$ if $v_i$ uses a non-shaded quadrant, and the sign $\sigma(v_i;v_1\ldots v_\ell)=-1$ if $v_i$ uses a shaded quadrant. By definition, the vertex sign $\sigma(v_1\ldots v_\ell)$ of the polygon $P$ is
$$\sigma(v_1\ldots v_\ell)=\prod_{i=1}^{\ell}\sigma(v_i;v_1\ldots v_\ell).$$

\begin{definition}\label{def:quiverC}
The potential $W(\SC)\in \HH_0(Q(\SC))$ of $Q(\SC)$ is defined by
$$W(\SC)=\sum_{v_1\dots v_\ell\in\Gamma_\ell^+}\sigma(v_1\dots v_\ell)\cdot v_\ell\dots v_1\quad-\sum_{w_1\dots w_\ell\in\Gamma_\ell^-}\sigma(w_1\dots w_\ell)\cdot w_1\dots w_\ell,$$
where the sums run over all possible $\ell\in\N$, $\ell\geq2$, and all possible elements of $\Gamma_\ell^{\pm}$. The pair $(Q(\SC),W(\SC))$ is referred to as the curve quiver with potential of $\SC$.  We often abbreviate and refer to such a pair as a {\it curve QP} or a {\it cQP}.\hfill$\Box$
\end{definition}

\noindent In Definition \ref{def:quiverC} we always write the vertices on the boundary left to right as read counter-clockwise; in this manner the monomial is an actual cycle in the quiver $Q(\SC)$.

We often consider QPs up to right-equivalence, i.e.~up to automorphisms of the path algebra, which we now define. Following the notation from \cite{DWZ}, let $\C\langle Q\rangle$ denote the path algebra of a quiver $Q$ and $\C\langle\langle Q\rangle\rangle$ its completion. Here we view $\C\langle\langle Q\rangle\rangle$ as a topological algebra via the $\mathfrak{m}$-adic topology, where $\mathfrak{m}$ is the two-sided ideal
generated by the arrow span of $Q$. The following is \cite[Definition 4.2]{DWZ}:

\begin{definition}[\cite{DWZ}]\label{def:right_equivalent}
Let $(Q,W)$ and $(Q',W')$ be two QPs on the same vertex set. A right-equivalence between $(Q,W)$ and $(Q',W')$ is a $\C$-algebra isomorphism $\varphi:\C\langle\langle Q\rangle\rangle\lr\C\langle\langle Q\rangle\rangle$ such that $\varphi|_\C=\mbox{id}$ and $\varphi(W)$ is cyclically equivalent to $W'$.\hfill$\Box$
\end{definition}

In Definition \ref{def:right_equivalent}, two potentials $W,W'$ are said to be cyclically equivalent if their difference $W-W'$ lies in the
closure of the span of all elements of the form $a_1\cdots a_d-a_2\cdots a_da_1$, where $a_1,\ldots,a_d$ is a cyclic path, cf.~\cite[Definition 3.2]{DWZ}.

\subsection{Properties of curve QPs under planar moves}\label{ssec:planar_moves} A configuration of curves $\SC$ can be modified by compactly supported smooth isotopies of $\Sigma$. The combinatorics of $\SC$, including the intersection pattern and polygons bounded by $\SC$, do not change under such isotopies. We can modify $\SC$ more significantly by choosing one curve $\g\in\SC$ and smoothly isotope it to another curve $\g'\sse\Sigma$. The new configuration $\SC':=(\SC\cup\{\g'\})\setminus\{\g\}$ has different combinatorics than that of $\SC$. In this article, we will consider two configurations $\SC$ and $\SC'$ equivalent if they can be connected by a sequence of triple moves and bigon moves; moves that we now introduce.
\subsubsection{Behavior under triple point moves.} Let $\g_1,\g_2,\g_3\in\SC$. The two moves in Figure \ref{fig:Moves} will be referred to as triple point moves. In general, triple moves will refer to any local move that is smoothly isotopic to either of the two moves in Figure \ref{fig:Moves}, possibly after switching orientations of arrows. The two moves in the figure capture all triple moves, up to rotational symmetries. A triple move applied to a configuration $\SC$ is a local operation: there exists a neighborhood $U\sse\Sigma$ such that $\SC\cap U$ is as in Figure \ref{fig:Moves} (left), the new configuration $\SC'$ coincides with $\SC$ outside of $U$, and $\SC'\cap U$ is as in Figure \ref{fig:Moves} (right). That is, this is a change in a configuration $\SC$ which is compactly supported, as the boundary conditions in the local model in Figure \ref{fig:Moves} coincide before and after the move.

\begin{center}
	\begin{figure}[H]
		\centering
		\includegraphics[scale=0.5]{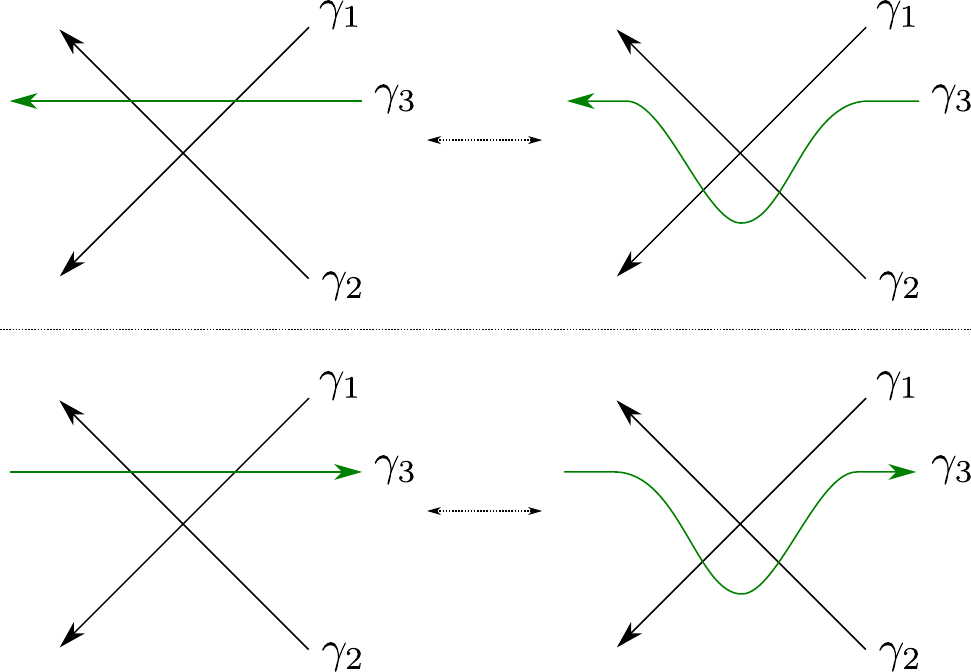}
		\caption{The two triple point moves.}
		\label{fig:Moves}
	\end{figure}
\end{center}

Let us study how the curve QP $(Q(\SC),W(\SC))$ behaves under such triple point moves applied to $\SC$.

\begin{prop}\label{lem:triplepoint}
Let $(Q(\SC),W(\SC))$ be a curve QP associated to $\SC$. Then $(Q(\SC),W(\SC))$ is invariant under triple point moves, up to right-equivalence.
\end{prop}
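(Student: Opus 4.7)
The plan is to produce an explicit right-equivalence between $(Q(\SC),W(\SC))$ and $(Q(\SC'),W(\SC'))$ given by a substitution on the three local arrows at the triple point. First I would observe that a triple point move neither creates nor destroys arrows: each pairwise intersection point $p_{ij}\in\g_i\cap\g_j$ in $\SC$ maps to a unique pairwise intersection point $p_{ij}'\in\g_i\cap\g_j$ in $\SC'$, and outside a small disk $U\sse\Sigma$ containing the triple point $\SC$ and $\SC'$ agree. This gives a tautological identification $Q(\SC)=Q(\SC')$, so that $W(\SC')$ already lives in $\HH_0(Q(\SC))$ and the claim reduces to finding a single $\varphi\in\Aut(\widehat{\C\langle Q(\SC)\rangle})$ with $\varphi(W(\SC))$ cyclically equivalent to $W(\SC')$.

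Next I would classify the polygons that contribute differently to $W(\SC)$ and $W(\SC')$. A polygon whose boundary avoids $U$ contributes identically on both sides. The remaining contributions split into two types: the central $3$-gon $\triangle$ cut out by $\g_1,\g_2,\g_3$ inside $U$, which bounds on exactly one side of the move (and with a definite vertex-sign); and polygons $P$ of size $\ell\ges 2$ whose boundary enters $U$ with a vertex at some $p_{ij}$. The key geometric observation is that each such $P$ bounded by $\SC$ corresponds to a polygon $P'$ bounded by $\SC'$ related to it by swapping the vertex $p_{ij}$ for the composable detour $p_{ik}p_{kj}$ around the opposite corner of $\triangle$, i.e.~by gluing or removing $\triangle$ along a shared edge. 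This bijection between affected polygons is forced by the convexity condition (iii) in the definition of $\ell$-gon.

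With this bijection in hand, the candidate right-equivalence is
$$\varphi(p_{ij}) \;=\; p_{ij} \;+\; \epsilon_{ij}\, p_{ik}p_{kj}, \qquad \varphi(a)=a \text{ for every other arrow } a\in Q(\SC)_1,$$
where $\{i,j,k\}=\{1,2,3\}$ and the signs $\epsilon_{ij}\in\{\pm 1\}$ are determined by the shading rule of Figure~\ref{fig:OrientationSigns} at the three vertices of $\triangle$. Because the correction terms $p_{ik}p_{kj}$ lie in the square of the arrow ideal, $\varphi$ is unit-triangular and hence a bona fide automorphism of $\widehat{\C\langle Q(\SC)\rangle}$. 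Expanding $\varphi$ on any polygon monomial $v_1\cdots v_\ell$ of $W(\SC)$ reproduces the original monomial plus a sum of "augmented" monomials in which a single vertex $p_{ij}$ is replaced by the two-vertex detour $p_{ik}p_{kj}$; by the polygon bijection above, these augmented monomials are exactly the new polygon contributions to $W(\SC')$. The difference $\varphi(W(\SC))-W(\SC')$ therefore collapses to the isolated $\triangle$-monomial, whose sign is then absorbed into the choice of the $\epsilon_{ij}$.

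The main obstacle will be an exhaustive sign audit under the shading convention of Definition~\ref{def:quiverC}, complicated by the fact that Figure \ref{fig:Moves} records two distinct triple point moves and that the three curves can be oriented in several inequivalent ways. I would handle this by first proving the identity in the maximally symmetric case of the left move in Figure~\ref{fig:Moves}, where all three arrows cycle coherently around $\triangle$; there the shading signs at the three triangle corners are rotationally symmetric and the $\epsilon_{ij}$ can all be taken to be $+1$. The remaining cases, including the right move of Figure~\ref{fig:Moves} and all permissible orientation choices, then follow by reversing the orientation of a single curve $\g_i\mapsto -\g_i$ and tracking how this swaps positive/negative intersections and shaded/unshaded quadrants; since $\varphi$ is built only from composable products of the three local arrows, such a reorientation only flips the corresponding $\epsilon_{ij}$ and preserves the form of the right-equivalence.
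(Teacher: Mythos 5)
Your overall mechanism is the same as the paper's: identify $Q(\SC)$ with $Q(\SC')$ tautologically (no intersection points are created or destroyed) and exhibit the right-equivalence as a unitriangular substitution on one local arrow, $p_{ij}\mapsto p_{ij}\pm p_{ik}p_{kj}$; this is literally the automorphism $p_{21}\mapsto p_{21}-p_{23}p_{31}$ used in the paper for the first move of Figure \ref{fig:Moves}. However, your bookkeeping of the affected polygons contains genuine errors. First, the claim that the central $3$-gon ``bounds on exactly one side of the move'' is false under the orientation-coherence condition (ii) in the definition of an $\ell$-gon: depending on the orientation pattern, the small triangle is a coherently oriented polygon on \emph{both} sides of the move (this is the paper's second case, where it contributes $p_{13}p_{32}p_{21}$ before and $q_{13}q_{32}q_{21}$ after, with matching signs), or on \emph{neither} side (the first case). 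Consequently there is never a stray triangle monomial left over, and your plan to have the difference $\varphi(W(\SC))-W(\SC')$ ``collapse to the isolated $\triangle$-monomial, whose sign is then absorbed into the choice of the $\epsilon_{ij}$'' cannot work: changing $\epsilon_{ij}$ rescales the correction term in every monomial through $p_{ij}$, it does not create or cancel a standalone cubic term. Second, you have the two cases interchanged: in the ``maximally symmetric'' situation where the three local arrows form a directed $3$-cycle, there is no composable detour $p_{ik}p_{kj}$ parallel to any local arrow, so your substitution is not even defined there --- and none is needed, since the trivial relabeling is already a right-equivalence. The nontrivial substitution is required precisely in the other case, where a through-going region has two local routes (e.g.\ $p_{21}$ or $p_{23}p_{31}$) on one side of the move and one route on the other; this is what the paper's matrices $R$, $R'$ record.

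Third, the final reduction of ``all permissible orientation choices'' to one case by reversing the orientation of a single curve $\g_i$ is not justified. Reversing $\g_i$ changes the configuration itself: it reverses all arrows incident to $\g_i$ globally, changes the shading at every intersection on $\g_i$, and qualitatively changes which local regions are coherently oriented polygons (the triangle appears or disappears). Invariance of the QP of one configuration under a triple move does not formally imply invariance for the re-oriented configuration, so each orientation pattern (up to symmetry, the two of Figure \ref{fig:Moves}) needs its own short verification of the route matrices and of the signs coming from the shaded quadrants, which is exactly what the paper's proof carries out. With the case analysis corrected --- substitution automorphism in the first case, identity relabeling plus the sign check on the two triangles in the second --- your argument becomes the paper's proof.
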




\begin{proof} There are two cases to consider for a triple point move, depending on the orientations, see Figure \ref{fig:Moves} for the two cases. First, we consider the local model in Figure \ref{fig:TriplePointMove_Case1_Before} (left) and denote by $\g_i^{in}$, resp.~$\g_i^{out}$, the tail, resp.~the head, of the segment of $\g_i$ in the local model; the head is where the arrow is drawn.

\begin{center}
	\begin{figure}[H]
		\centering
		\includegraphics[scale=0.6]{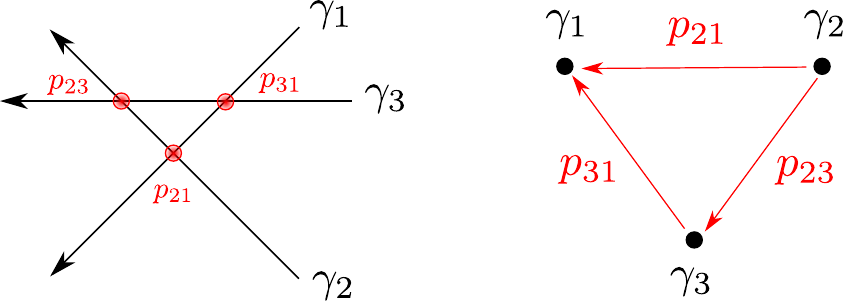}
		\caption{(Left) Local model for Case I before triple move. (Right) Local quiver. The local variables for this local model are $p_{23},p_{31}$ and $p_{21}$, corresponding to the three intersection points (left), or equivalently, arrows of the quiver (right).}
		\label{fig:TriplePointMove_Case1_Before}
	\end{figure}
\end{center}

Let $R=(r_{ij})$ be the matrix such that $r_{ij}$ equals the sums of monomials on $p_{lk}$ that correspond to corners of regions that intersect the local model entering at $\gamma_l^{in}$ and exiting at $\gamma_k^{out}$, from either side. That is, a monomial $p_{i_1i_2}\cdots p_{i_{s-1}i_s}$ on the three variables $p_{lk}$ appears as a summand in the entry $r_{ij}$ if and only if there exists an $\ell$-gon in $\SC$ whose vertices in the boundary correspond to a monomial (in the global potential) of the form $\alpha(p_{i_1i_2}\cdots p_{i_{s-1}i_s})\beta$, where $\alpha,\beta$ are monomials without $p_{lk}$ variables, and the $\ell$-gon enters the local model via $\g_1^{in}$ and exits via $\g_2^{out}$. Figure \ref{fig:TriplePointMove_Case1_ExampleRegions} illustrates two regions contributing to the entry $r_{12}$ of $R$ for this local model.

\begin{center}
	\begin{figure}[H]
		\centering
		\includegraphics[scale=0.6]{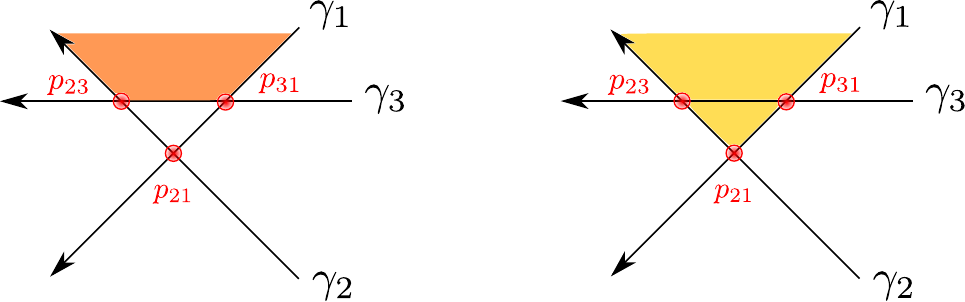}
		\caption{(Left) A region, in orange, that contributes $p_{23}p_{31}$ to the $r_{12}$ entry of the matrix $R$, as it is of the form $\alpha(p_{23}p_{31})\beta$ and it enters via $\gamma_1^{in}$ and exits via $\g_2^{out}$. (Right) Another region, in yellow, that contributes $p_{21}$ to the same entry $r_{12}$.}
		\label{fig:TriplePointMove_Case1_ExampleRegions}
	\end{figure}
\end{center}

Then the matrix $R$ reads
$$R=\begin{pmatrix}
0 & p_{21}+p_{23}p_{31} & p_{31}\\
p_{21} & 0 & p_{23}\\
p_{31} & p_{23} & 0
\end{pmatrix}.$$
Indeed, the second entry in the first row corresponds to regions that enter in the upper right through $\gamma_1^{in}$ and exist through $\gamma_2^{out}$ on the upper left. There are two such regions: using $p_{21}$ or using $p_{23}p_{31}$, see Figure \ref{fig:TriplePointMove_Case1_ExampleRegions}. Since both regions are oriented clockwise, there is an overall positive sign, and since none of the quadrants being used are shaded, the sign remains positive. The third entry of the first row is similar. The first entry $p_{21}$ and the third entry $p_{23}$ on the second row are actually $p_{21}=-(-p_{21})$ and $p_{23}=-(-p_{23})$: both of the regions they record are oriented counter-clockwise, which gives an overall minus sign, and in addition both of them use a unique quadrant which happens to be shaded. This introduces another minus sign and therefore the entry has two minus signs, thus a positive coefficient in the end. The first entry on the third row is also $p_{31}=-(-p_{31})$, in that sense, whereas the second entry on the third row has two positive signs directly. From now onwards we will compute with both types of signs in mind (orientation sign and shaded quadrants signs), without further specifying when a positive sign is actually an even number of negative signs. The local quiver, recording the intersections that occur only in the local model, is depicted in Figure \ref{fig:TriplePointMove_Case1_Before} (right).\\

After the triple point move we have the local model in Figure \ref{fig:TriplePointMove_Case1_After} (left) and its corresponding local quiver in Figure \ref{fig:TriplePointMove_Case1_After} (right). Note that the quivers in Figure \ref{fig:TriplePointMove_Case1_Before} (right) and Figure \ref{fig:TriplePointMove_Case1_After} (right), before and after, are identical: thus it is clear that $Q(\SC)$ is invariant under this particular triple point move.

\begin{center}
	\begin{figure}[H]
		\centering
		\includegraphics[scale=0.6]{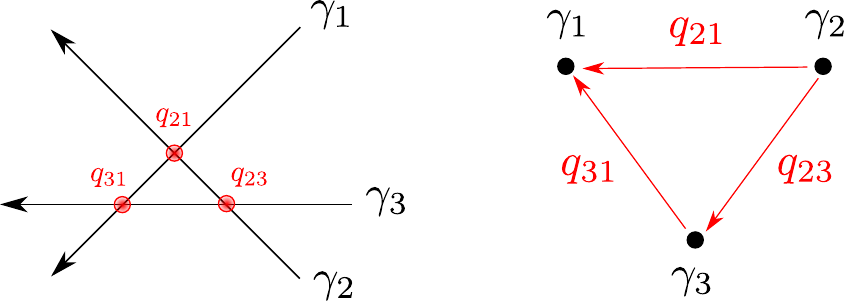}
		\caption{(Left) Local model for Case I after triple move. (Right) Local quiver.}
		\label{fig:TriplePointMove_Case1_After}
	\end{figure}
\end{center}

The matrix of regions $R'$ after the triple move is computed as with $R$ above. It reads:
$$R'=\begin{pmatrix}
0 & q_{21} & q_{31}\\
q_{21}-q_{23}q_{31} & 0 & q_{23}\\
q_{31} & q_{23} & 0
\end{pmatrix}$$

Note that the signs in the entry $q_{21}-q_{23}q_{31}$ are indeed correct: the region associated to $q_{21}$ is oriented counter-clockwise and it uses a shaded quadrant, thus it is positive, and the region associated to $q_{23}q_{31}$ is oriented counter-clockwise and it uses two shaded quadrants, thus it is negative. Let us now compare $R$ and $R'$, which keep track of the regions in the potential $W(\SC)$ before and after a triple point move.

For that, consider the automorphism $\phi\in\Aut(\C\langle Q(\SC)\rangle)$ of the path algebra which is the identity on all arrows $p_{ij}$ except for $(i,j)=(2,1)$ and sends the arrow $p_{21}$ to
$$p_{21}\mapsto p_{21}-p_{23}p_{31}.$$
After applying this automorphism, $R$ becomes
$$\phi(R)=\begin{pmatrix}
0 & (p_{21}-p_{23}p_{31})+p_{23}p_{31} & p_{31}\\
(p_{21}-p_{23}p_{31}) & 0 & p_{23}\\
p_{31} & p_{23} & 0
\end{pmatrix}=
\begin{pmatrix}
0 & p_{21} & p_{31}\\
p_{21}-p_{23}p_{31} & 0 & p_{23}\\
p_{31} & p_{23} & 0
\end{pmatrix}.$$

The matrix $R'$ and $\phi(R)$ are now related by the trivial relabeling $p_{ij}\longmapsto q_{ij}$. Note that the automorphism $\phi\in\Aut(\C\langle Q(\SC)\rangle)$ is defined on the entirety of the path algebra of $Q(\SC)$, not just the variables $p_{ij}$ for the local model. Furthermore, the matrices $R'$ and $\phi(R)$ match, up to trivial relabeling, and their entries are all computing regions with all possible boundary conditions in the local model, with the specific endpoints $\gamma_i^{in}$ and $\gamma_j^{out}$ being recorded in different components. Therefore this equivalence given by $p_{21}\mapsto p_{21}-p_{23}p_{31}$, and otherwise the identity, extends to an equivalence between the global potentials $W(\SC)$ and $W(\SC')$, where $\SC'$ is the given configuration $\SC$ after applying a triple point move. As a consequence, since the quiver $Q(\SC)=Q(\SC')$ remains invariant, the quivers with potential $(Q(\SC),W(\SC))$ and $(Q(\SC'),W(\SC'))$ are right-equivalent. Thus, the right-equivalence class of the quiver with potential $(Q(\SC),W(\SC))$ is invariant under this triple point move.\\

Second, we now consider the other local model for a triple move, as depicted in Figure \ref{fig:TriplePointMove_Case2_Before} (left). The local quiver is drawn in Figure \ref{fig:TriplePointMove_Case2_Before} (right).

\begin{center}
	\begin{figure}[H]
		\centering
		\includegraphics[scale=0.6]{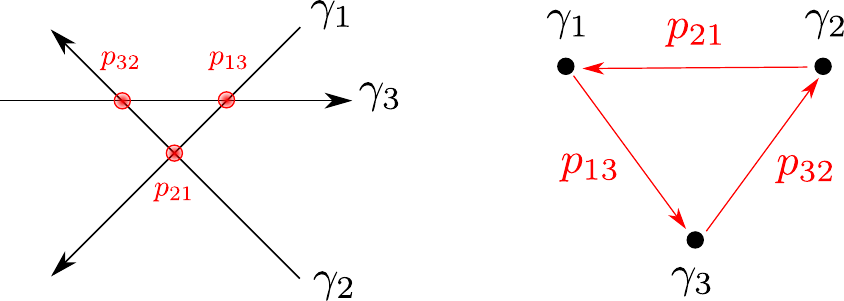}
		\caption{(Left) Local model for Case II before triple move. (Right) Local quiver.}
		\label{fig:TriplePointMove_Case2_Before}
	\end{figure}
\end{center}

In the same notation as above, the matrix $R$ before the triple point move is
$$R=\begin{pmatrix}
0 & p_{21} & p_{13}\\
p_{21} & 0 & p_{32}\\
p_{13} & p_{32} & 0
\end{pmatrix},$$
except that in this case we also have the closed triangle region associated to the monomial $p_{13}p_{32}p_{21}$, entirely contained in this local model. Notice that this is a clockwise oriented triangle with none of its (interior) quadrants being shaded: the sign is therefore positive.

\begin{center}
	\begin{figure}[H]
		\centering
		\includegraphics[scale=0.6]{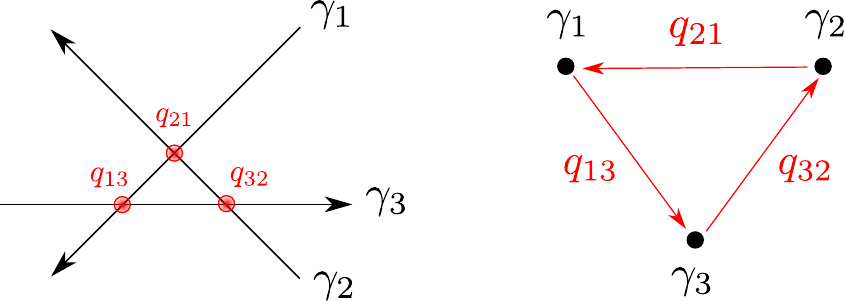}
		\caption{(Left) Local model for Case II after triple move. (Right) Local quiver.}
		\label{fig:TriplePointMove_Case2_After}
	\end{figure}
\end{center}

After the triple point move we have the local model depicted in Figure \ref{fig:TriplePointMove_Case2_After} (left), whose quiver is illustrated in Figure \ref{fig:TriplePointMove_Case2_After} (right). Therefore, the quiver $Q(\SC)$ also remains invariant under this second type of triple point move. In the notation as above, the matrix $R'$ after the triple point move is
$$R'=\begin{pmatrix}
0 & q_{21} & q_{13}\\
q_{21} & 0 & q_{32}\\
q_{13} & q_{32} & 0
\end{pmatrix},$$
and we also have the triangle region associated to the monomial $q_{13}q_{32}q_{21}$, again entirely enclosed in this local model. The sign of this triangle is indeed positive: it is oriented counter-clockwise and its three (interior) quadrants are shaded. This accounts for a total of four negative signs, and therefore a resulting positive sign. In this second type of triple move the comparison before and after is given by the relabeling $p_{ij}\mapsto q_{ij}$, which indeed maps $R$ to $R'$ and the triangle $p_{13}p_{32}p_{21}$ to the triangle $q_{13}q_{32}q_{21}$. Therefore the quiver $Q(\SC)$ and the potential $W(\SC)$ are both identical before and after this triple point move. This concludes the result.
\end{proof}


\subsubsection{A property of local bigons.}\label{sssec:local_property_bigons} An $\ell$-gon with $\ell=2$ will be referred to as a bigon $B\sse\Sigma$. Note that a bigon $B\sse\Sigma$ must be oriented, either clockwise or counter-clockwise.\footnote{If $\g_i,\g_j\in\SC$ bound an ``unoriented bigon'', then $Q(\SC)$ has two arrows from $\g_i$ to $\g_j$, or viceversa. We do not consider this a bigon. It does not yield a 2-cycle $Q(\SC)$ and (thus) it does not contribute a quadratic monomial to $W(\SC)$.} By definition, a bigon  $B\sse\Sigma$ is said to be local if $\mbox{int}(B)\cap\g_i=\emptyset$ for all $\g_i\in\SC$.

Given a local bigon $B\sse\Sigma$ as in Figure \ref{fig:LocalBigon}, we refer to the region $\rho(B)$ drawn in red, resp.~the region $\la(B)$ drawn in blue, as its right region, resp.~as its left region.

\begin{center}
	\begin{figure}[H]
		\centering
		\includegraphics[scale=0.8]{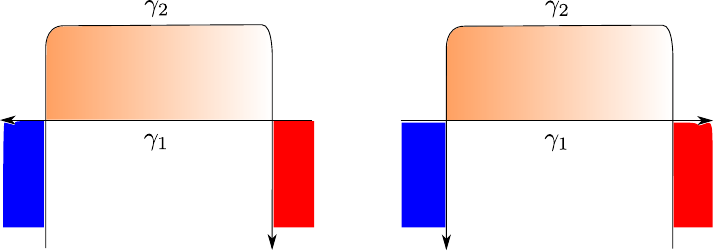}
		\caption{The regions $\rho(B)$ (red) and $\la(B)$ (blue). The bigons depicted in orange.}
		\label{fig:LocalBigon}
	\end{figure}
\end{center}

\color{black}
\begin{assumption}\label{assumption:bigons}
Let $\SC$ be a curve configuration. Every local bigon $B\sse\Sigma$ is assumed to satisfy that there is no polygon bounded by $\SC$ which contains both $\la(B)$ and $\rho(B)$.\hfill$\Box$
\end{assumption}

In other words, given a local bigon, we assume that there is no other polygon that has vertices two of its vertices be the vertices of the local bigon. \color{black} It will be proven in Section \ref{ssec:assumption_bigons}, specifically Proposition \ref{prop:local_bigons}, that this assumption is satisfied for the configurations of curves that we shall use, i.e.~for those configurations constructed in Section \ref{sec:QP_PlabicFence}. For now we work under Assumption \ref{assumption:bigons}: we suppose it holds for all local bigons discussed subsequently.
\color{black}
\begin{remark}
Proposition \ref{prop:local_bigons} shows that non-degeneracy of the quiver with potential guarantees that Assumption \ref{assumption:bigons} holds. Non-degeneracy is a robust condition: if a potential is non-degenerate, then any of its mutations and any restriction to a
full subquiver is also non-degenerate. That said, there are specific examples one can build where Assumption \ref{assumption:bigons} fails. For instance, the cylindrical closure of the 2-stranded braid word $\sigma_1^2$ (with two crossings) in a cylinder $\Sigma=S^1\times\R$ gives two curves $\g_1$ and $\g_2$ where Assumption \ref{assumption:bigons} does not hold.\hfill$\Box$
\end{remark}
\color{black}

\subsubsection{Behavior under bigon moves.} By definition, the local bigon moves are the local moves depicted in Figure \ref{fig:Moves2}. This move applies to local bigons, i.e.~bigons $B\sse\Sigma$ such that $\mbox{int}(B)\cap\g_i=\emptyset$ for all $\g_i\in\SC$. This is the reason for referring to it as a local bigon move, instead of just a bigon move.

\begin{center}
	\begin{figure}[H]
		\centering
		\includegraphics[scale=0.6]{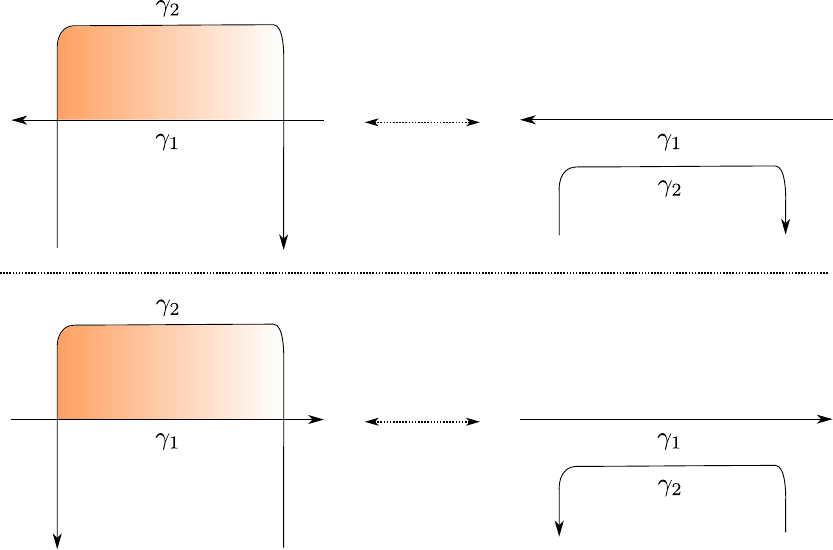}
		\caption{The two local bigon moves.}
		\label{fig:Moves2}
	\end{figure}
\end{center}

In order to understand how $(Q(\SC),W(\SC))$ changes under local bigon moves, we introduce the following:

\begin{definition}\label{def:localreduction}
Let $(Q,W)$ be a QP and $a,b$ be two arrows in $Q$ such that $ab$ is a 2-cycle and $ab$ appears as a quadratic monomial in $W$. The $ab$-reduction of $(Q,W)$, or the local reduction $(Q,W)$ at $ab$, is the QP $(Q',W')$ obtained as follows:
\begin{itemize}
    \item[(i)] The quiver $Q'$ coincides with $Q$ except that both arrows $a$ and $b$ have been erased.\\

    \item[(ii)] The potential $W'$ is constructed as follows. Suppose that there exist polynomials $U,V$, neither of them containing $a$ or $b$, such that
    $$W = (a-U)(b-V) + W',$$ where $W'$ does not contain $a$ or $b$, and the equality is up to cyclic permutation of each monomial. Then $W':=W-(a-U)(b-V)$.
\end{itemize}
If such polynomials $U,V$ do not exist, then the $ab$-reduction of $(Q,W)$ is said not to exist.\hfill$\Box$
\end{definition}

Definition \ref{def:localreduction} and the use of the word reduction for such an operation is a direct influence of \cite[Section 4]{DWZ}. Note that an oriented bigon $B\sse\Sigma$ uniquely determines its two intersection points, and it is uniquely determined by them if we know they bound a  bigon. Equivalently, these are two arrows $a,b$ in $Q(\SC)$ such that $ab$ is a 2-cycle and $ab$ is a monomial appearing in $W(\SC)$. In these cases, where $ab$ is the 2-cycle corresponding to an oriented bigon $B$, we also refer to an $ab$-reduction as a $B$-reduction.

\begin{lemma}\label{lem:bigon}
Let $(Q(\SC),W(\SC))$ be the curve QP associated to a curve configuration $\SC$, $B\sse\Sigma$ be a local bigon and $\SC'$ the configuration $\SC$ after a local bigon move at $B$. Then the $B$-reduction of $(Q(\SC),W(\SC))$ exists and it equals $(Q(\SC'),W(\SC'))$.
\end{lemma}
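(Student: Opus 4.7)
The goal is to produce polynomials $U, V$ in the arrows of $Q(\SC) \setminus \{a, b\}$ satisfying $W(\SC) = (a-U)(b-V) + W(\SC')$ under the natural identification of arrows in $Q(\SC) \setminus \{a, b\}$ with arrows of $Q(\SC')$, so that Definition \ref{def:localreduction} yields the claim. Since $B$ is local and $\la(B) \neq \rho(B)$ by Assumption \ref{assumption:bigons}, no curve of $\SC$ enters $\mbox{int}(B)$; hence at each of $a, b$ the four quadrants are distinct and only $B$ itself occupies the interior-of-$B$ quadrant at both vertices. The bigon $B$ therefore contributes the 2-cycle $ab$ to $W(\SC)$, with sign determined by the orientation of $\partial B$ and the shading conventions of Definition \ref{def:quiverC}.

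Next I would define $V$ as the signed sum, in the sense of Definition \ref{def:quiverC}, of the cyclic complements $v_2 \cdots v_\ell$ ranging over polygons $P = a\, v_2 \cdots v_\ell$ of $\SC$ that use $a$ exactly once and do not use $b$, and symmetrically define $U$ from polygons visiting $b$ but not $a$. By construction, the terms $-aV$ and $-Ub$ in the expansion $(a-U)(b-V) = ab - aV - Ub + UV$ cancel exactly the cyclic monomials of $W(\SC)$ involving only one of $a, b$. For polygons of $\SC$ visiting both $a$ and $b$, the cyclic monomial factors as $a V' b U'$ with $V', U'$ paths in $Q(\SC) \setminus \{a, b\}$; the crucial geometric observation is that after the bigon move the arcs of $\g_i, \g_j$ forming $\partial B$ separate, so that the two paths $V'$ and $U'$ close through the region vacated by $B$ into a single cycle $U'V'$ in $Q(\SC')$ bounding a polygon of $\SC'$. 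This furnishes a bijection between polygons of $\SC$ meeting both $a$ and $b$ and those polygons of $\SC'$ born from the bigon move, and each contributes to the $UV$ term. All other polygons of $\SC$ disjoint from $a, b$ persist unchanged as polygons of $\SC'$.

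The remaining task is sign verification. The shading of Definition \ref{def:quiverC} assigns a sign to every vertex of every polygon, and I would trace how the shading at the four quadrants of $a, b$ distributes across $B$, the polygons contributing to $aV$ and $Ub$, and the merged polygons contributing to $UV$. The case analysis splits along the two orientations of a local bigon in Figure \ref{fig:Moves2}, parallel to the proof of Proposition \ref{lem:triplepoint}. I expect the principal obstacle to be verifying the sign for the $UV$ term: one must check that $\sigma(aV')\,\sigma(U'b)$ coincides, up to the overall $+$ produced by the expansion $(a-U)(b-V)$, with the sign $\sigma(U'V')$ that Definition \ref{def:quiverC} assigns to the merged polygon in $\SC'$. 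The hypothesis $\la(B) \neq \rho(B)$ is precisely what rules out a single polygon of $\SC$ from serving as its own partner through the bigon, which is the degeneracy that would obstruct this clean factorization.
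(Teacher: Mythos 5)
Your setup (isolating the $2$-cycle contribution $ab$, defining $U$ and $V$ from polygons through exactly one of the two intersection points, and cancelling the single-vertex terms against $-aV$ and $-Ub$) matches the paper's strategy, but the last third of your argument rests on a misconception. You populate the $UV$ term with ``polygons of $\SC$ visiting both $a$ and $b$,'' factored as $aV'bU'$, and claim these merge into the new polygons of $\SC'$. Under the standing Assumption \ref{assumption:bigons} this family is empty: the actual content of that assumption in this proof is that any polygon through the south-west quadrant at one vertex of $B$ lies in $\la(B)$, so if it also passed through the other vertex it would use $\rho(B)$ and, by embeddedness, force $\la(B)=\rho(B)$. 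Hence no polygon of $\SC$ other than $B$ itself meets both intersection points, and this is exactly what makes the factorization $W(\SC)=(a-U)(b-V)+W'$ of Definition \ref{def:localreduction} possible: a monomial of the form $aV'bU'$ with $a$ and $b$ non-adjacent could not be absorbed into the expansion $ab-aV-Ub+UV$ at all, so if such polygons existed the $B$-reduction would fail to exist in the required form rather than be accounted for by $UV$. Your reading of the assumption as merely ruling out ``a polygon serving as its own partner'' misses this, and your claimed bijection is between the wrong sets.

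The term $UV$ is instead matched geometrically after the move: when the bigon is removed, each pair consisting of one polygon through $a$ only and one polygon through $b$ only concatenates across the strip vacated by $B$ into a single new polygon of $\SC'$, and these new polygons contribute $-UV$ to $W(\SC')$; polygons disjoint from $a,b$ persist unchanged, giving $W(\SC')=-UV+\tilde W$ where $\tilde W$ collects the terms of $W(\SC)$ containing neither $a$ nor $b$. You would also still need to carry out the sign bookkeeping you defer (the polygons through a single vertex are oriented oppositely to $B$ and use shaded quadrants, which is what produces the $-U$ and $-V$), but the structural gap above is the essential one to repair.
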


\begin{proof} Let us consider a bigon $B\sse\Sigma$ bounded by two curves $\g_1$ and $\g_2$. There are two cases, depending on whether the bigon $B$ is oriented clockwise or counter-clockwise. The two cases are almost identical and thus we focus on that of a clockwise oriented bigon, as depicted in Figure \ref{fig:Bigon} (left). The local quiver is drawn in Figure \ref{fig:Bigon} (right) and contains the 2-cycle $p_{12}p_{21}$. Since the bigon is oriented clockwise and none of the two quadrants of the bigon is shaded, its contribution to the potential $W(\SC)$ is the monomial $p_{12}p_{21}$.\footnote{In the case of a counter-clockwise oriented bigon, the bigon would contribute to the potential with $-p_{12}p_{21}$. There would be a minus sign because the bigon would be oriented counter-clockwise and use exactly two shaded quadrants.}

\begin{center}
	\begin{figure}[H]
		\centering
		\includegraphics[scale=0.6]{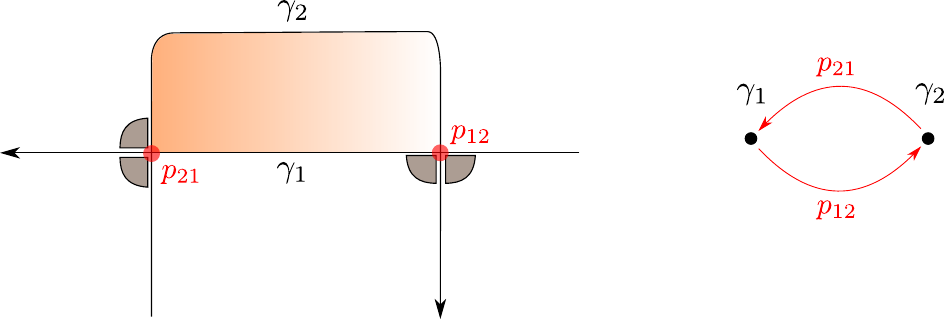}
		\caption{(Left) Local model for a bigon before bigon move. The intersection points are highlighted (red) and we have shaded the quadrants. (Right) Local quiver.}
		\label{fig:Bigon}
	\end{figure}
\end{center}

First, let us prove that the $B$-reduction of $(Q(\SC),W(\SC))$ exists.

Let $S_{21}$ be the set of polygons that contain the south-west corner of $p_{21}$, i.e. the set of polygons that come in from $\gamma_2^{in}$, turn left at $p_{21}$ and exit via $\gamma_1^{out}$.\footnote{There are no polygons using the north-west or south-east quadrants at $p_{21}$ because of orientations. Similarly, there are no polygons using the north-east and south-west quadrants at $p_{12}$.} A polygon in $S_{21}$ has the opposite orientation from the bigon $B$ and contains $p_{21}$ in its associated monomial in $W(\SC)$. Therefore, the contribution from polygons in $S_{21}$ to the potential $W(\mathcal{C})$ has the form $-Up_{21}$ for some polynomial $U$ that does not contain $p_{21}$. Similarly, let $S_{12}$ be the set of polygons that contain the south-east corner of $p_{12}$, i.e. the set of polygons that come from $\gamma_1^{in}$, turn down at $p_{12}$ and exit via $\gamma_2^{out}$. The contribution from the polygons in $S_{12}$ to the potential $W(\mathcal{C})$ has the form $-Vp_{12}$ for some polynomial $V$ that does not contain $p_{12}$.
\color{black}
By Assumption \ref{assumption:bigons}, the polynomial $U$ does not contain $p_{12}$ and $V$ does not contain $p_{21}$.

\color{black}Let us write the potential as
$$W(\cC) = p_{12}p_{21} - Up_{21} - Vp_{12} + \tilde{W}.$$
for some $\tilde{W}$. Now, monomials in $W(\SC)$ are precisely given by boundaries of embedded regions, and thus their associated cycles in the quiver are irreducible, i.e.~not the composition of two cycles. Therefore, the  construction above is such that $\tilde{W}$ does not contain $p_{12}$ nor $p_{21}$. Following the formulation in Definition \ref{def:localreduction}, we rewrite
$$W(\cC) = (p_{12}-U)(p_{21}-V) - UV + \tilde{W},$$

where $\tilde{W}$ does not contain $p_{12}$ or $p_{21}$. Therefore, we can select $W':= -UV + \tilde{W}$ in Definition \ref{def:localreduction}. Thus we conclude that $(Q(\SC),W(\cC))$ is indeed $B$-reducible.\\


Second, let us now show that the $B$-reduction of $(Q(\SC),W(\SC))$ equals $(Q(\SC'),W(\SC'))$. After the bigon move at $B$ we have the local configuration in Figure \ref{fig:Bigon_After} (left). The local quiver $Q'(\SC)$ becomes two vertices with no arrows between them, as there are no intersections in this local piece; it is depicted in Figure \ref{fig:Bigon_After} (right). For the same reason, the potential $W(\SC')$ after the bigon move has no contributions coming from this local configuration.

\begin{center}
	\begin{figure}[H]
		\centering
		\includegraphics[scale=0.6]{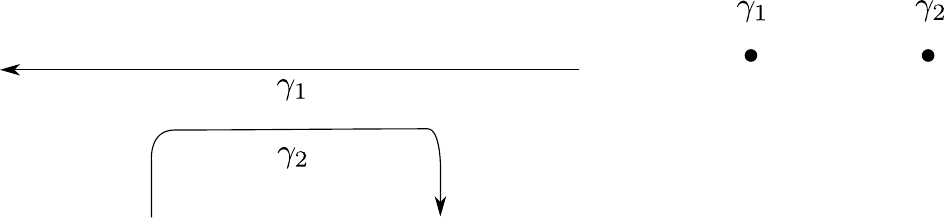}
		\caption{(Left) Local model after bigon move. (Right) Local quiver.}
		\label{fig:Bigon_After}
	\end{figure}
\end{center}

By comparing the two quivers directly, $Q(\SC')$ is obtained from $Q(\SC)$ by exactly removing the arrows $p_{12}$ and $p_{21}$. We claim that the $B$-reduction of the curve potential $W(\cC)$ equals the curve potential after the bigon removal, namely $W' = W(\cC')$, where $W'=-UV + \tilde{W}$ as above. Indeed, since the bigon region disappears, $p_{12}p_{21}$ is not in $W(\SC')$. Also, any region in $S_{21}$ itself disappears under the bigon move, and thus the polynomial term $-Up_{12}$ no longer appears in $W(\SC')$. The argument for $-Vp_{21}$ is identical. This justifies that $p_{12}p_{21}$, $-Up_{12}$ and $-Vp_{21}$ must be subtracted from $W(\SC)$ to obtain the potential $W(\SC')$. It suffices to prove that the term $UV$ must be added (with signs) to $W(\SC')$. Indeed, when we remove the local bigon, a polygon in $S_{21}$ and a polygon in $S_{12}$ will be connected along the strip between $\gamma_1$ and $\gamma_2$, creating new polygons that contribute $-UV$. This leads to the addition of $-UV$ in the potential $W(\SC')$. In conclusion, the curve QP $(Q(\SC'),W(\SC'))$ is the $B$-reduction of $(Q(\SC),W(\SC))$, as required.\\

\color{black}
Finally, the argument in the counterclockwise case is analogous. The only change comes from the signs of the polygons at the corners. In the same notation as above, the potential then reads
$$W(\SC) = p_{12}p_{21} + Up_{21} + Vp_{12} + \bar{W}$$
instead of $W(\cC) = p_{12}p_{21} - Up_{21} - Vp_{12} + \tilde{W}$. The rest of the argument remains same because we can absorb the signs into the $U$ and $V$ polynomials, by relabelling them $\wt U:=-U$ and $\wt V:=-V$.
\color{black}
\end{proof}


\color{black} 

\subsubsection{Bigon removal and the reduced part of $(Q(\SC),W(\SC))$.}\label{sssec:HassScott} We introduce the following definition:

\begin{definition}
A configuration of curves $\SC$ such that there is no bigon bounded by $\SC$ is said to be reduced. A configuration which is not reduced is said to be non-reduced.\hfill$\Box$
\end{definition}

\noindent Consider a non-reduced configuration $\SC_0$ with a collection of bigons $\{B_1,\ldots,B_m\}$. Note that these bigons $B_i$ might not be local, i.e.~the interior of each $B_i$ might intersect curves in $\SC_0$ in a non-empty set. The first step is to modify $\SC_0$ into a reduced configuration. For that, we must systematically remove bigons.

The argument for the removal of (not necessarily local) bigons dates back to E.~Steinitz in 1916, cf.~\cite[Sections 67/68]{BigonRemoval_76}. Since then, this argument has been reproduced in different parts of the literature. Specific instances are \cite[Lemma 2.2]{BigonRemoval_22}, Lemma 2 in \cite[Section 13.1]{BigonRemoval_67} and discussion thereafter, proofs of Lemmas 1.4 and 1.6 in \cite[Section 1]{BigonRemoval_94} and discussion preceding them, the first two lemmas of \cite[Section 2]{BigonRemoval_96}, the proof of \cite[Lemma 3.2]{BigonRemoval_09} and discussion thereafter, or the discussions in \cite[Section 1]{BigonRemoval_17} and \cite[Section 2]{BigonRemoval19} or references therein. Bigons are referred to as {\it 2-gons} in \cite{BigonRemoval_94}, {\it lentilles} in \cite{BigonRemoval_96}, {\it lenses} in \cite{BigonRemoval_67} and {\it spindel} in \cite[Section 67]{BigonRemoval_76}.

A piece of notation: by definition, a bigon $B\sse\Sigma$ of $\SC$ is said to be minimal if it does not contain another bigon of $\SC$, i.e.~there is no bigon $B'\sse\Sigma$ with $B'\sse B$ (and $B\neq B'$). Minimal bigons are called {\it indecomposable lenses} in \cite[Section 3.1]{BigonRemoval_67}, {\it irreduzible spindel} in \cite[Section 67]{BigonRemoval_76}, and {\it minimal} or {\it innermost} in \cite[Section 2.1]{BigonRemoval_22} and \cite[Section 1.1]{BigonRemoval_17}. For specificity, we choose the statement in \cite[Lemma 2.2]{BigonRemoval_22}:\\

\begin{lemma}[\cite{BigonRemoval_22}]\label{lem:bigon_removal} For any minimal bigon $B\sse\Sigma$ of $\SC$, there exists a sequence of triple point moves from $\SC$ to a new configuration $\SC'$ such that the image of the bigon $B\sse\Sigma$ in $\SC'$ is a local bigon, i.e.~any pieces of curves inside of $B$ can be removed by triple point moves.
\end{lemma}

\begin{center}
	\begin{figure}[H]
		\centering
		\includegraphics[scale=0.6]{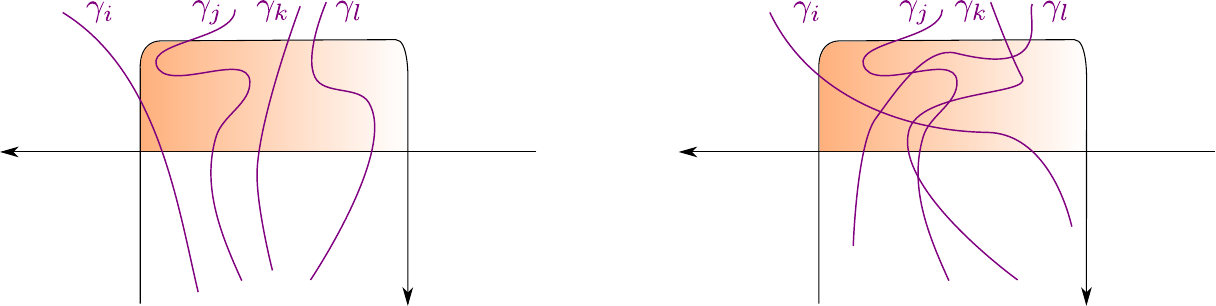}
		\caption{(Left) A bigon in the first case in the proof of Lemma \ref{lem:bigon_removal}. (Right) A non-local minimal bigon.}
		\label{fig:BigonRemoval}
	\end{figure}
\end{center}

The statement \cite[Lemma 2.2]{BigonRemoval_22} is referred to as {\it Steinitz bigon removal algorithm} in \cite{BigonRemoval_22}. At the request of the referee, we include a sketch of the proof:

\begin{proof}[Proof of Lemma \ref{lem:bigon_removal}]
Suppose first that $B$ contains no intersections of any $\gamma_i$ and $\gamma_j$ inside, i.e. any curve $\gamma_i$ intersecting the interior $\mbox{int}(B)$ does so in a connected interval  embedded in $B$ which does not intersect any other curves in $\mbox{int}(B)$. By minimality, this interval has endpoints on the two different sides of the bigon. Figure \ref{fig:BigonRemoval} (left) gives an example of such a case. In this case, it suffices to apply exactly one triple point move per intersecting curve to make the bigon local. Indeed, choose one of the two vertices of the bigon and perform a triple point move with the unique interval inside of $B$ which is a side of the unique local triangle in $B$ containing that vertex. This first step removes one such intersecting interval. Iterating, we can use triple point moves to remove each intersecting strand until the bigon is local.

Consider now the general situation, as depicted in Figure \ref{fig:BigonRemoval} (right), where there might be crossings of $\gamma_i,\gamma_j\in\SC$ inside of $B$. We reduce to the previous case as follows. First, one shows that any non-local minimal bigon $B$ must have a local triangle incident to one of its sides. This is \cite[Lemma 2.1]{BigonRemoval_22}, or \cite[Lemma 13.1.2]{BigonRemoval_67}, or also \cite[Lemma 1.4]{BigonRemoval_94}, for instance. Then one implements a triple point move to remove that triangle and the total number of crossings (of $\gamma_i,\gamma_j\in\SC$) inside of $B$ strictly decreases by one. Iterating this argument, the number of crossings is reduced to zero, which is the case discussed above.
\end{proof}

Lemma \ref{lem:bigon_removal} then implies:

\begin{thm}\label{thm:HassScott} Let $\SC_0$ be a non-reduced configuration with a collection of bigons $\{B_1,\ldots,B_m\}$. Then, for any $i\in[m]$ with $B_i$ minimal, there exists a sequence of triple point moves and one local bigon move on $\SC_0$ that yields a new configuration $\SC_1$ such that the collection of bigons of $\SC_1$ is $\{B_1,\ldots,B_m\}\setminus\{B_i\}$.
\end{thm}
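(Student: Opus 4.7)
The plan is to invoke the Hass-Scott algorithm directly: choose an innermost bigon inside $B_i$ and successively push third-curve arcs out of its interior via triple point moves until it becomes local, at which point a single local bigon move removes it. The key structural fact enabling this is that triple point moves preserve, for each pair of curves, the number of intersection points and their cyclic order up to a single local reconfiguration; this rigidly controls which bigons can be created or destroyed along the way.

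First, I would reduce to an innermost sub-bigon. Among all bigons $B\sse B_i$ bounded by any pair of arcs coming from curves in $\SC_0$ (not just the two curves bounding $B_i$), pick one whose interior contains the fewest intersection points; if $B_i$ is itself innermost, take $B=B_i$. By construction, no bigon is strictly contained in the interior of $B$.

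Second, I would show that if $\mbox{int}(B)$ contains any intersection points, then there exists an embedded triangle $T\sse B$ whose interior contains no curve arcs. Every third curve that meets $\mbox{int}(B)$ enters and exits across $\dd B$, cutting $B$ into polygonal regions. A combinatorial argument on the disk $B$, combined with the innermostness hypothesis (no sub-bigon inside $B$), forces one of these regions to be an empty triangle. A triple point move at $T$ is compactly supported near $T$, leaves $\dd B$ untouched outside a neighborhood of $T$, and strictly decreases the number of intersection points inside $B$. Iterating the innermost-triangle step, I would arrive after finitely many triple point moves at the configuration where $B$ is local, and then apply a single local bigon move to $B$ to remove it.

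The main obstacle will be verifying that the global bigon set changes by exactly $\{B_i\}$: no other bigon $B_j$ should be created or destroyed by the intermediate triple point moves. In principle, a triple point move can both create and annihilate bigons elsewhere; ruling this out requires a careful analysis showing that the moves performed on empty triangles inside an innermost $B$ affect only the arcs traversing $B$, and that the bigon count outside $B$ is unchanged. This preservation lemma is precisely the technical core of \cite{HassScott85}, and it is what the cited theorem imports wholesale for our use.
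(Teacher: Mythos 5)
The paper does not actually prove this statement---it is imported wholesale from \cite{HassScott85}, with only the remark that the procedure is local near $B_i$ and uses triple point moves to make the bigon local before one local bigon move removes it---and your sketch (innermost bigon, empty-triangle triple point moves, then a single local bigon move, with the preservation of the remaining bigon collection deferred to the citation) is exactly that argument. So your proposal matches the paper's treatment; the technical core you flag is precisely what the reference supplies.
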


\begin{proof}
Consider an $i\in [m]$ with $B_i$ minimal. By Lemma \ref{lem:bigon_removal}, there exists a sequence of triple point moves that makes $B_i$ a local bigon. Note that such sequence of triple point moves does not create or remove any additional bigons. Once the bigon $B_i$ has been made local, a unique local bigon move suffices to obtained the required configuration $\SC_1$.
\end{proof}

Note that the proof of Theorem \ref{thm:HassScott} is local on a given bigon $B_i$, in that it only modifies the configuration $\SC_0$ in an arbitrarily small neighborhood of $B_i$. It is worth remarking that Theorem \ref{thm:HassScott} is able to remove a bigon without introducing further (unnecessary) intersections: just with triple points moves any bigon (possibly non-local) becomes a local bigon.

In summary, given a non-reduced configuration  $\SC\sse\Sigma$, Theorem \ref{thm:HassScott} implies that there exists a sequence of moves, both possibly containing and intertwining triple point moves and local bigon moves, that when applied to $\SC$ yields a reduced configuration. Indeed, there always exists a minimal bigon in a non-reduced configuration. After applying Theorem \ref{thm:HassScott}, this bigon can be removed, yielding a configuration with strictly less bigons. Iterating this procedure leads to a reduced configuration.

\color{black}


\subsection{Curve QPs under $\g$-exchange} We introduce the following operation on a configuration $\SC$.


\subsubsection{Definition of $\g$-exchange of $\SC$}\label{sssec:definition_exchange} Let $\gamma\in\SC$ be a curve. By definition, the configuration of curves $\mu_{\g}(\SC)$ is given by the configuration of curves
$$\mu_{\g}(\SC):=\{\mu_\g(\g_1),\ldots,\mu_\g(\g_b)\},$$
where the curves $\mu_\g(\g_i)$ are obtained as follows, $i\in[b]$.

Consider a neighborhood $U\sse\Sigma$ of $\gamma$ such that any curves in $\SC\setminus\{\g\}$ intersect $U$ as depicted in Figure \ref{fig:CExchange_Before}. That is, up to planar isotopy,  each curve $\g_i\in\SC\setminus\{\g\}$ intersects $U$ at a collection of intervals $\{I_i^p\}$, $p\in[q_i]$ for some $q_i\in\N$, where each such interval only intersects $\g$ once and intervals do not intersect each other; note that this collection $\{I_i^p\}$ might be empty for some curves $\g_i\in\SC$. The neighborhood $U$ must be a cylinder, since it is the neighborhood of an embedded connected curve in an oriented $\Sigma$.

The curves $\mu_\g(\g_i)$ in $\mu_\g(\SC)$ are constructed as follows. We apply a positive Dehn twist of the cylinder $U$ along the simple embedded curve $\g$ to all the segments in $U$ that intersect $\g$ positively, depicted in blue in Figure \ref{fig:CExchange_Before}, and the identity map to all the segments in $U$ that intersect $\g$ negatively, depicted in green in Figure \ref{fig:CExchange_Before}. Note that a Dehn twist often refers to a mapping class, i.e.~an element of $\pi_0(\mbox{Diff}^c(U))$. In this construction we explicit mean a representative of that mapping class: at this stage we make one such choice of representative and continue.

Since both a Dehn twist and the identity are (represented by) compactly supported diffeomorphisms, each resulting segment $f(I_i^p)\sse U$, $f$ a Dehn twist or the identity, can be glued to the corresponding curve $\g_i\cap(\Sigma\setminus U)$. By definition, the result of applying such operation to $\g_i$ is the curve $\mu_\g(\g_i)$ if $\g_i\neq\g$. We define $\mu_\g(\g):=-\g$. See Figure \ref{fig:CExchange_After} for the result of applying $\mu_\g$ to Figure \ref{fig:CExchange_Before}.

\begin{remark}\label{rmk:exchange_choices}
The choices in the above construction will not affect any aspects of our results. For instance, the choice of neighborhood $U$ with the required properties only modifies the resulting configuration $\mu_\g(\SC)$ by a global isotopy of $\Sigma$. Similarly, the choice of representative of the Dehn twist class in $\pi_0(\mbox{Diff}^c(U))$ results in the same configuration $\mu_\g(\SC)$ up to global isotopy.\hfill$\Box$
\end{remark}

By Remark \ref{rmk:exchange_choices} and the fact that we only consider configurations $\SC$ up to a global diffeomorphism of $\Sigma$, the resulting configuration $\mu_{\g}(\SC)$ is well-defined. We therefore define:

\begin{definition}\label{def:exchange}
The configuration of curves
$\mu_{\g}(\SC)$ is said to be the $\g$-exchange of $\SC$.\hfill$\Box$
\end{definition}
Note that applying a $\g$-exchange twice along the same curve, first to $\g$ and then to $-\g$, leads to the same configuration; same up to an overall compactly supported diffeomorphism of $\Sigma$. Indeed, consecutively applying a $\g$-exchange at the same vertex yields the configuration $\mu_{-\g}\mu_\g(\SC)=\tau_\g(\SC)$, given by applying a (representative of a) Dehn twist $\tau_\g\in\mbox{Diff}^c(\Sigma)$ to {\it all} the curves in $\SC$. In that sense, a $\g$-exchange is an involution. From the perspective of the quiver with potential $(Q(\SC),W(\SC))$, applying a Dehn twist $\tau_\g$ preserves the quiver with potential: the quivers with potential of $\SC$ and $\tau_\g(\SC)$ are identical, $(Q(\SC),W(\SC))=(Q(\tau_\g(\SC)),W(\tau_\g(\SC)))$, not just right-equivalent.

Two comments on Definition \ref{def:exchange}:

\begin{itemize}
    \item[(i)] Since we only apply a Dehn twist to {\it some} collection of $\g_i\in\SC$, it is possible (and it occurs) that the smooth representatives we have constructed for the curves in $\mu_\g(\SC)$ bound bigons. That is, $\mu_\g(\SC)$ might be non-reduced even if $\SC$ is reduced.\\

    \item[(ii)] If a configuration $\SC$ of simple embedded curves is non-reduced and $\g,\g'$ are curves bounding a bigon, then $\mu_\g(\SC)$ contains the immersed curve $\mu_\g(\g')$. 
\end{itemize}

These two comments highlight an important principle: if we want to perform a sequence of $\g$-exchanges, we must guarantee that $\mu_\g(\SC)$ is reduced so that we can iterate the procedure, as $\g$-exchanges are not defined for immersed curves $\g$. The saving grace is that we might be able to perform triple moves and bigon moves to $\mu_\g(\SC)$ so that a configuration becomes reduced, following Theorem \ref{thm:HassScott}. Our focus thus now shifts towards understanding how polygons bounded by $\SC$ change under a $\g$-exchange. In other words, understanding how the right-equivalence class of the curve QP $(Q(\SC),W(\SC))$ changes under a $\g$-exchange.

\begin{remark}[Algebraic topological $\g$-exchange]\label{rmk:topological_exchange} There is a variation on Definition \ref{def:exchange} which is also natural. Namely, we define $\mu_\g(\g_i)=\tau_\gamma(\g_i)$ if the algebraic intersection number $\langle\gamma_i,\gamma\rangle$ is strictly positive (or $\g_i=\g$) and $\mu_\g(\g_i)=\g_i$ otherwise. Here $\tau_\gamma$ denotes a positive Dehn twist along $\g$ and we use the skew-symmetric intersection pairing in $H_1(\Sigma,\Z)$. For the configurations $\SC$ that we will study, which are always reduced, have algebraic intersection numbers equal to geometric intersection numbers and all $\g_i\in\SC$ be primitive, this alternative definition coincides with Definition \ref{def:exchange}. In general, it does not coincide: consider a $\gamma$-exchange along a null-homologous curve $\g$ which intersects another $\g_i$ exactly at a positive and negative intersection points, bounding a bigon. Definition \ref{def:exchange} would produce an immersed representative of the homology class of $\g_i$, and the definition in this remark would keep $\g_i$ identically the same.\hfill$\Box$
\end{remark}

\begin{remark} Definition \ref{def:exchange} or its homological variation (in Remark \ref{rmk:topological_exchange}) have appeared in previous works. Specifically, in \cite[Section 3]{KingQiu_TwistedSurfaces} and \cite[Definition 2.3]{STW}. In the context of the former reference, it is introduced as an analogue
of simple tilting on hearts for $t$-structures in triangulated categories, following the homological construction of forward and backward tilts in \cite{HRS96}. The context for the later reference is almost identical to ours and Definition \ref{def:exchange} is essentially \cite[Definition 2.3]{STW}.\hfill$\Box$
\end{remark}


\subsubsection{QP-mutation according to Derksen-Weyman-Zelevinsky}\label{sssec:DWZ} In order to understand how the right-equivalence class of the curve QP $(Q(\SC),W(\SC))$ changes under a $\g$-exchange, we recall the notion of QP-mutation introduced in \cite[Section 5]{DWZ}. Let $Q$ be a quiver with set of vertices $Q_0$, set of arrows $Q_1$ and let us denote $h(a)\in Q_0$, resp.~$t(a)\in Q_0$, the head, resp.~the tail, of an arrow $a\in Q_1$.

Let $(Q,W)$ be a quiver with potential. Consider a vertex $v_k\in Q_0$ and assume that $v_k$ does not belong to an oriented 2-cycle. Suppose also that no monomial in $W$ starts or ends with $v_k$, i.e.~ we write the monomials so that $v_k$ appears neither at the start nor the end of a monomial in $W$, which can always be achieved after cyclically reordering. The former hypothesis will always be satisfied in the applications of this manuscript and, as just said, the latter hypothesis can be always guaranteed after cyclically changing some monomials in the potential.

\begin{definition}[\cite{DWZ}] The non-reduced QP-mutation of $(Q,W)$ at $v_k$, satisfying the above hypotheses, is the QP $(\mu_k(Q),\mu_k(W))$ defined as follows.

The quiver $\mu_k(Q)$ has the same set of vertices $\mu_k(Q)_0=Q_0$ as $Q$ and its set of arrows $\mu_k(Q)_1$ is obtained from $Q_1$ according to the following procedure:

\begin{enumerate}
    \item All arrows in $Q_1$ not incident to $v_k$ also belong to $\mu_k(Q)_1$.\\

    \item For each pair $a,b\in Q_1$ of incoming arrow $a$ and outgoing arrow $b$ at $v_k$, create the composite
arrow $[ba]\in\mu_k(Q)_1$.\\

    \item Replace each incoming arrow $a\in Q_1$ (resp.~each outgoing arrow $b\in Q_1$) at $v_k$ by a
corresponding arrow $a^\ast\in\mu_k(Q)_1$ (resp.~$b^\ast\in\mu_k(Q)_1$) now oriented in the opposite way.
\end{enumerate}

The potential $\mu_k(W)$ is defined as
$$\mu_k(W):=[W]+\Delta_k,\quad \Delta_k:=\sum_{a,b\in Q_1,t(a)=h(b)=k}[ba]b^\ast a^\ast$$
where $[W]$ is obtained by substituting the composite arrow $[ab]$ for each factor $ab$ with $t(a) =
h(b) = k$ of any cyclic path occurring in the expansion of $W$ that contains $ab$.\hfill$\Box$
\end{definition}

By definition, the QP $(\mu_k(Q),\mu_k(W))$ is said to be obtained by non-reduced QP-mutation of $(Q,W)$ at the vertex $v_k$. It is shown in \cite[Theorem 5.2]{DWZ} that the right-equivalence class of $(\mu_k(Q),\mu_k(W))$ depends only on the right-equivalence class of $(Q,W)$.

\begin{remark}
The mutation $\mu_k(Q)$ of a quiver $Q$, without the potentials $W$ and $\mu_k(W)$, was previously defined in \cite[Definition 4.2]{FominZelevinsky_ClusterI}.\hfill$\Box$
\end{remark}

The result of a QP-mutation of $(Q,W)$ with no 2-cycles might result in a QP $(\mu_k(Q),\mu_k(W))$ with 2-cycles. In \cite{DWZ} the notions of {\it reduced} and {\it trivial} QPs are introduced, as follows. A QP $(Q,W)$ is said to be reduced if the degree-2 homogeneous part $W^{(2)}$ of $W$ is trivial, i.e.~$W^{(2)}=0$. That is, $(Q,W)$ is reduced if $W$ contains no quadratic monomial terms, i.e.~no terms of the form $ab$, $a,b\in Q_1$. A QP $(Q,W)$ is said to be trivial if $W$ is entirely quadratic, i.e.~ $W\in \C\langle Q\rangle^{(2)}$ belongs to the degree-2 homogenous part of the path algebra $\C\langle Q\rangle$ and the Jacobian algebra of $(Q,W)$ is isomorphic to $\C$.

\begin{remark} By \cite[Prop.~ 4.4.]{DWZ}, there is a more pragmatic criterion to detect triviality: $(Q,P)$ is trivial if and only if the set of arrows
$Q_1$ consists of $2N$ distinct arrows $a_1, b_1,\ldots,a_N , b_N$ such that each $a_kb_k$ is a cyclic
2-path, and there is a change of arrows $\varphi$ such that $\varphi(W)$ is
cyclically equivalent to $a_1b_1 +\ldots+ a_N b_N$.\hfill$\Box$
\end{remark}

\begin{remark}\label{rmk:reduced_2cycles} Note that the quiver $Q$ is not enough to determine the reduced and trivial parts of a QP $(Q,W)$. For instance, the quiver $Q$ consisting of two vertices $Q_0=\{v_1,v_2\}$ and two arrows $Q_1=\{a,b\}$ with $h(a)=t(b)=v_1$, $t(a)=h(b)=v_2$ is trivial if the potential is chosen to be $W=ab$, and it is reduced if $W=0$ is chosen to vanish.\hfill$\Box$
\end{remark}

The following structural result is established in \cite[Theorem 4.6]{DWZ}:

\begin{thm}[\cite{DWZ}]\label{thm:DWZ_splitting} For every $QP$ $(Q, W)$ with trivial arrow span
$Q_{triv}$ and reduced arrow span $Q_{red}$, there exist a trivial QP $(Q_{triv},W_{triv})$ and
a reduced QP $(Q_{red}, W_{red})$ such that $(Q,W)$ is right-equivalent to the direct sum $(Q_{triv},W_{triv})\oplus(Q_{red}, W_{red})$. Also, the right-equivalence class of each of the
QPs $(Q_{triv},W_{triv})$ and $(Q_{red}, W_{red})$ is determined by the right-equivalence class of
$(Q,W)$.
\end{thm}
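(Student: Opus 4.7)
The plan is to construct a right-equivalence realizing the splitting by an iterative change of arrows in the completed path algebra $R\langle\langle Q\rangle\rangle$, handling the quadratic part of $W$ first and then eliminating mixed higher-degree interactions via a Newton-type iteration. Write $W=W^{(2)}+W^{(\geq 3)}$, where $W^{(2)}$ consists only of $2$-cycles. Viewing $W^{(2)}$ as collecting bilinear pairings between arrow spaces $V_{ij}:=\Span\{a\in Q_1:t(a)=i,\ h(a)=j\}$ and $V_{ji}$, we may apply a linear change of arrows (a degree-preserving automorphism of $R\langle\langle Q\rangle\rangle$) to diagonalize the non-degenerate part of each pairing. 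This produces a new set of arrows splitting into pairs $a_1,b_1,\ldots,a_N,b_N$ spanning $Q_{\mathrm{triv}}$ and a complementary set spanning $Q_{\mathrm{red}}$, so that after this linear change $W^{(2)}$ is cyclically equivalent to $\sum_{k=1}^N a_kb_k$ and no residual quadratic term involves arrows of $Q_{\mathrm{red}}$.

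Next I would eliminate the ``mixed'' higher-order monomials, that is, cyclic monomials of degree $\geq 3$ that contain at least one $a_k$ or $b_k$. The plan is a Newton-style iteration: given the current potential $W_n = \sum a_kb_k + W_{\mathrm{red},n} + R_n$ where $R_n$ is the mixed remainder of minimum $\mathfrak{m}$-adic order $d_n\geq 3$, define an automorphism $\varphi_n$ of $R\langle\langle Q\rangle\rangle$ by
\[
\varphi_n(a_k)=a_k-\eta_k^{(n)},\qquad \varphi_n(b_k)=b_k-\xi_k^{(n)},\qquad \varphi_n|_{Q_{\mathrm{red}}}=\mathrm{id},
\]
with $\xi_k^{(n)},\eta_k^{(n)}\in\mathfrak{m}^{d_n-1}$ chosen by solving a linear equation so that the substitution in $\sum a_kb_k$ cancels the lowest-degree mixed part of $R_n$. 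The residual mixed terms introduced by this substitution live in $\mathfrak{m}^{d_n+1}$, so $d_{n+1}>d_n$. The infinite composition $\varphi=\cdots\varphi_2\varphi_1$ converges in the $\mathfrak{m}$-adic topology to an automorphism, and $\varphi(W)$ has the desired shape $\sum a_kb_k+W_{\mathrm{red}}$ with $W_{\mathrm{red}}$ supported entirely on $Q_{\mathrm{red}}$. This identifies $(Q,W)$ as right-equivalent to $(Q_{\mathrm{triv}},\sum a_kb_k)\oplus (Q_{\mathrm{red}},W_{\mathrm{red}})$; triviality of the first summand follows from the pragmatic criterion recalled just before the theorem.

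For the uniqueness clause, the plan is to show that any right-equivalence $\psi$ between two such splittings $(Q_{\mathrm{triv}},W_{\mathrm{triv}})\oplus(Q_{\mathrm{red}},W_{\mathrm{red}})$ and $(Q_{\mathrm{triv}},W_{\mathrm{triv}}')\oplus(Q_{\mathrm{red}},W_{\mathrm{red}}')$ induces, on the degree-$1$ level, a linear isomorphism of arrow spaces which preserves the intrinsic splitting into the non-degenerate and null parts of the bilinear form determined by the quadratic term of the potential (this intrinsic splitting depends only on the right-equivalence class of $(Q,W)$). Using this, one can modify $\psi$ by a further Newton iteration of the same shape as above to arrange that it preserves each factor, and then restrict to obtain the two separate right-equivalences.

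The main obstacle I expect is the second step: one must produce a convergent sequence of substitutions that simultaneously kills mixed terms of every degree without reintroducing quadratic terms or disturbing $Q_{\mathrm{red}}$. The delicate point is that solving the linear cancellation equation at order $d_n$ requires inverting the action of multiplication by $a_k$ and $b_k$ on cyclic monomials, which is only well-defined modulo cyclic equivalence; one must carefully keep track of cyclic representatives. Convergence in the $\mathfrak{m}$-adic topology then falls out of the bookkeeping of orders, giving the desired splitting up to right-equivalence.
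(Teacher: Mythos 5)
This statement is quoted verbatim from \cite{DWZ} (their Splitting Theorem) and the paper supplies no proof of its own, so the only comparison available is with the original source. Your sketch --- a linear change of arrows normalizing the quadratic part to $\sum_k a_kb_k$, followed by an $\mathfrak{m}$-adic limit of substitutions $a_k\mapsto a_k-\eta_k$, $b_k\mapsto b_k-\xi_k$ chosen via cyclic-derivative data to kill the mixed higher-order terms, with the uniqueness clause handled by adjusting a given right-equivalence to respect the splitting --- is essentially the argument of Derksen--Weyman--Zelevinsky, so your approach matches the cited proof.
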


This allows us to finally define QP-mutation:

\begin{definition}[\cite{DWZ}]\label{def:QP_mutation} The QP-mutation of $(Q,W)$ at $v_k$ is the QP $(\mu_k(Q)_{red},\mu_k(W)_{red})$ given by the reduced part of the non-reduced mutation $(\mu_k(Q),\mu_k(W))$.\hfill$\Box$
\end{definition}

This is an involutive operation if performed at the same vertex $v_k$ consecutively, i.e.~performing QP-mutation of $(Q,W)$ at $v_k$ twice consecutively leads to $(Q,W)$ again. Note also that Definition \ref{def:localreduction} of local reduction is one step towards extracting the reduced part of a QP.


\subsubsection{Reduced part for curve QPs}\label{sssec:reducedpart_curveQP} Let $\SC$ be a curve configuration and $(Q(\SC),W(\SC))$ its associated curve QP. Theorem \ref{thm:HassScott} above geometrically explains Theorem \ref{thm:DWZ_splitting} in the case of curve QPs. Indeed, consider a non-reduced configuration $\SC_0$ with a non-empty collection of bigons $\{B_1,\ldots,B_m\}$. Then its associated curve QP $(Q(\SC),W(\SC))$ is not reduced: by definition, it contains one 2-cycle in $Q(\SC)$ for each bigon, and such 2-cycles each appear as quadratic monomial in $W(\SC)$.

\begin{definition}\label{def:reduced_configuration} Let $\SC$ be a configuration with a collection of bigons $\{B_1,\ldots,B_m\}$. Any reduced curve configuration $\SC_{red}$ obtained by iteratively applying Theorem \ref{thm:HassScott} $m$ times to $\SC$, where at each time exactly one bigon is eliminated, is said to be a reduction of $\SC$.\hfill$\Box$
\end{definition}

Namely, Theorem \ref{thm:HassScott} allows us to remove one bigon at a time, by applying a sequence of triple point moves and then a local bigon move. By Proposition \ref{lem:triplepoint}, the sequence of triple point moves does not change the right-equivalence class of $(Q(\SC),W(\SC))$. Each time that we apply Theorem \ref{thm:HassScott} we need exactly one local bigon move. By Lemma \ref{lem:bigon}, the QP $(Q(\SC),W(\SC))$ then undergoes a $B$-reduction at a bigon $B$. By iteratively applying Theorem \ref{thm:HassScott}, we obtain the following:

\begin{lemma}\label{lem:reduced_parts} Let $\SC$ be a configuration, $(Q(\SC),W(\SC))$ its associated curve QP, $(Q(\SC)_{red},W(\SC)_{red}))$ its reduced QP part, and $\SC_{red}$ a reduction of the configuration $\SC$. Then

\begin{itemize}
    \item[(i)] $(Q(\SC_{red}),W(\SC_{red}))=(Q(\SC)_{red},W(\SC)_{red}))$, up to right equivalence.\\

    \item[(ii)] If $(Q(\SC),W(\SC))$ is non-degenerate, then $Q(\SC_{red})$ has no 2-cycles.
\end{itemize}

\end{lemma}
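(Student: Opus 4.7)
The plan is to proceed by induction on the number $m$ of bigons of $\SC$, using the Hass-Scott algorithm (Theorem \ref{thm:HassScott}) to eliminate them one at a time and matching each elimination with a QP-level reduction. If $m=0$ there is nothing to prove. For $m\ges 1$, let $\SC = \SC^{(0)}, \SC^{(1)}, \ldots, \SC^{(m)} = \SC_{red}$ be the intermediate configurations produced according to Definition \ref{def:reduced_configuration}. Each step $\SC^{(j)}\rightsquigarrow\SC^{(j+1)}$ consists of a sequence of triple point moves followed by a single local bigon move at some bigon $B_j$. By Proposition \ref{lem:triplepoint}, the triple point moves leave the right-equivalence class of the curve QP unchanged. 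By Lemma \ref{lem:bigon}, the subsequent local bigon move produces exactly the $B_j$-reduction of the curve QP in the sense of Definition \ref{def:localreduction}.

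For part (i), it remains to identify the iterated $B$-reduction with the DWZ reduced part. Since $\SC_{red}$ has no bigons, the potential $W(\SC_{red})$ contains no quadratic monomials and hence $(Q(\SC_{red}),W(\SC_{red}))$ is reduced in the sense of \cite{DWZ}. On the other hand, each $B$-reduction factors the potential locally as $(a-U)(b-V) + W'$ and discards the factored summand; this is precisely the explicit splitting-off of a trivial 2-arrow QP from $(Q,W)$, realizing one step of the abstract splitting provided by Theorem \ref{thm:DWZ_splitting}. Iterating these splittings until no quadratic monomials remain exhibits a right-equivalence between $(Q(\SC),W(\SC))$ and $(Q(\SC_{red}),W(\SC_{red}))\oplus T$ for some trivial QP $T$, so the reduced part of $(Q(\SC),W(\SC))$ equals $(Q(\SC_{red}),W(\SC_{red}))$ up to right-equivalence by the uniqueness clause of Theorem \ref{thm:DWZ_splitting}.

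Part (ii) then follows immediately from the standard definition of non-degeneracy in \cite{DWZ}: a QP is non-degenerate only if the quiver of its reduced part has no 2-cycles, a condition which in particular applies to the empty mutation sequence. Combined with part (i), this forces $Q(\SC_{red})$ to have no 2-cycles. The main obstacle I anticipate is verifying, at each step of Hass-Scott, that the locally created bigon $B_j$ satisfies Assumption \ref{assumption:bigons}, so that Lemma \ref{lem:bigon} legitimately applies; for the configurations relevant to Theorem \ref{thm:main} this will be handled in Section \ref{ssec:assumption_bigons} via Proposition \ref{prop:local_bigons}, and the induction also requires checking that this property is preserved along the sequence of intermediate configurations $\SC^{(j)}$.
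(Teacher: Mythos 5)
Your argument is correct and takes essentially the same route as the paper: iterate the Hass--Scott algorithm, use Proposition \ref{lem:triplepoint} and Lemma \ref{lem:bigon} to realize each bigon elimination as a right-equivalence followed by a $B$-reduction that splits off a trivial two-arrow summand, then conclude via the uniqueness clause of Theorem \ref{thm:DWZ_splitting}, with part (ii) obtained directly from the non-degeneracy definition (the paper offers exactly this alternative alongside Lemma \ref{lem:empty_cycles_degeneracy}). Your closing caveat about Assumption \ref{assumption:bigons} also mirrors the paper's treatment, which imposes it as a standing assumption and verifies it via Proposition \ref{prop:local_bigons} for the non-degenerate configurations actually used.
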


The proof of Lemma \ref{lem:reduced_parts} uses the notion of a non-degenerate QP, discussed in Section \ref{ssec:nondeg_def} below. This proof is thus postponed until Section \ref{ssec:proof_reduction}. Note that $Q(\SC_{red})$ might a priori have 2-cycles, even if there are no bigons bounded by $\SC_{red}$, cf.~Remark \ref{rmk:reduced_2cycles}. The non-degeneracy of a quiver with potential precisely rules out this type of situation, where a 2-cycle in $Q$ is not kept track by the potential $W$.


\subsubsection{Curve QP under $\g$-exchanges change via QP-mutations} We conclude this section by relating the $\g$-exchanges in Definition \ref{def:exchange} to the QP-mutation in Definition \ref{def:QP_mutation}.

\begin{prop}\label{prop:quivermutation}
Let $(Q(\SC),W(\SC))$ be a curve QP associated to $\SC$ and $\g\in \SC$. Then, possibly after applying a sequence of triple point moves and bigon moves to $\mu_\g(\SC)$, the QP $(Q(\mu_\g(\SC)),W(\mu_\g(\SC)))$ is right-equivalent to the QP-mutation of the QP $(Q(\SC),W(\SC))$ at the vertex $\gamma$.
\end{prop}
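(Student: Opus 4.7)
The plan is to lift the statement to the non-reduced level: I aim to show that the curve QP $(Q(\mu_\g(\SC)), W(\mu_\g(\SC)))$ is right-equivalent to the non-reduced QP-mutation $(\mu_\g(Q(\SC)), \mu_\g(W(\SC)))$ of Section \ref{sssec:DWZ}. The proposition then follows by passing to reduced parts. On the algebraic side, the reduced part of $(\mu_\g(Q(\SC)), \mu_\g(W(\SC)))$ is, by Definition \ref{def:QP_mutation} and Theorem \ref{thm:DWZ_splitting}, the QP-mutation of $(Q(\SC), W(\SC))$ at $\g$. On the geometric side, I extract the reduced part by running the Hass--Scott algorithm (Theorem \ref{thm:HassScott}) on $\mu_\g(\SC)$: its triple point moves preserve the right-equivalence class by Proposition \ref{lem:triplepoint}, and its local bigon moves realize $B$-reductions by Lemma \ref{lem:bigon}, so by Lemma \ref{lem:reduced_parts}(i) the curve QP of the resulting reduced configuration is the reduced part of $(Q(\mu_\g(\SC)), W(\mu_\g(\SC)))$.

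For the quiver identification, I work in the annular neighborhood $U$ of $\g$ from Section \ref{sssec:definition_exchange}. In $U$ each $\g_i \in \SC \setminus \{\g\}$ appears as a disjoint collection of arcs crossing $\g$ once; the positive (``blue'') arcs correspond bijectively to incoming arrows at the vertex $\g \in Q(\SC)_0$, and the negative (``green'') arcs to outgoing arrows. Choosing a model representative of $\tau_\g$ supported in $U$ which wraps blue arcs fully around $\g$ while leaving green arcs pointwise fixed, and then replacing $\g$ by $-\g$, a direct local inspection shows that (i) every old intersection at $\g$ persists but has its sign flipped, turning each arrow $a$ (resp.\ $b$) into $a^\ast$ (resp.\ $b^\ast$); (ii) each twisted blue arc crosses each green arc at a single new transverse point, one per ordered pair (incoming $a$, outgoing $b$); and (iii) no new intersections appear among blue arcs or among green arcs. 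This matches $\mu_\g(Q(\SC))$ on the nose, with the new intersections serving as the composites $[ba]$.

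For the potential, each such ordered pair $(a,b)$ bounds an embedded triangle $T_{a,b}$ in $U$ with vertices $[ba], b^\ast, a^\ast$ and sides on the twisted $\g_i$-arc, the $\g_j$-arc, and a subarc of $-\g$. Applying the shading rule of Figure \ref{fig:OrientationSigns} to $T_{a,b}$, and using that reversing $\g$ swaps the shaded pair of quadrants at both $a^\ast$ and $b^\ast$, one computes that $T_{a,b}$ contributes $+[ba]\, b^\ast a^\ast$ to $W(\mu_\g(\SC))$, so these triangles sum to $\Delta_\g$. Outside the $T_{a,b}$, I set up a bijection between the remaining polygons bounded by $\mu_\g(\SC)$ and polygons bounded by $\SC$: a polygon disjoint from $\g$ lies outside $U$ and is unaffected, while a polygon crossing $\g$ via a consecutive factor $ab$ deforms, along a short arc of the twisted $\g_i$-side past the vertex $[ba]$, into a polygon bounded by $\mu_\g(\SC)$ with $ab$ replaced by $[ba]$. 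The quadrant shadings away from $\g$ are unchanged by this deformation, so signs match, and one obtains $W(\mu_\g(\SC)) \equiv [W(\SC)] + \Delta_\g = \mu_\g(W(\SC))$ up to cyclic equivalence.

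The main obstacle will be the sign bookkeeping: verifying that the shading convention of Figure \ref{fig:OrientationSigns} produces exactly the universal coefficient $+[ba]\, b^\ast a^\ast$ on each new triangle, and that the substitution $ab \mapsto [ba]$ preserves the sign of every deformed polygon. I would reduce to a single normal form of a triangle $T_{a,b}$ and a single normal form of an $\ell$-gon crossing $\g$, check signs by hand in the style of the case analysis in the proof of Proposition \ref{lem:triplepoint}, and then extend by the planar symmetries of the annulus $U$. A secondary subtlety is that $Q(\mu_\g(\SC))$ can acquire new $2$-cycles---a composite $[ba]$ paired with any preexisting arrow $\g_j \to \g_i$ realizes geometrically as a (possibly non-local) bigon in $\mu_\g(\SC)$---but these are precisely the bigons removed by Hass--Scott, and their removal is what realizes the passage from the non-reduced identification to the reduced QP-mutation in the statement.
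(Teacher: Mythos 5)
Your proposal takes essentially the same route as the paper's proof: it establishes the non-reduced identification $(Q(\mu_\g(\SC)),W(\mu_\g(\SC)))=(\mu_\g Q(\SC),\mu_\g W(\SC))$ by the same local analysis in an annular neighborhood of $\g$ (persisting intersections with reversed arrows, one new crossing per incoming/outgoing pair giving $[ba]$, new triangles contributing $\Delta_\g$, and the substitution $ab\mapsto[ba]$ giving $[W]$), and then passes to the reduced part via Theorem \ref{thm:HassScott}, Proposition \ref{lem:triplepoint}, Lemma \ref{lem:bigon} and Lemma \ref{lem:reduced_parts}, exactly as the paper does. The sign bookkeeping and the treatment of polygons meeting $\g$ that you flag as remaining work are handled in the paper at the same level of detail, through the same local-model check, so the two arguments coincide in substance.
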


\begin{proof}
Given Definition \ref{def:QP_mutation}, there are three pieces to justify:
\begin{enumerate}
    \item The change in the quiver $Q(\SC)$ under $\g$-exchange.
    
    \item The change in the potential $W(\SC)$ under $\g$-exchange.

    \item The reduced part of the non-reduced QP-mutation is indeed $(Q(\mu_\g(\SC)_{red}),W(\mu_\g(\SC)_{red}))$.
\end{enumerate}
Parts (1) and (2) will be argued directly, as they indeed need a new computation. In fact, we will show that $(Q(\mu_\g(\SC)),W(\mu_\g(\SC)))$ is the non-reduced QP-mutation $(\mu_\g(Q(\SC)),\mu_\g(W(\SC)))$ of $(Q(\SC),W(\SC))$ at the vertex associated to $\g$. Part (3) follows directly from Lemma \ref{lem:reduced_parts} applied to the configuration $\mu_\g(\SC)$, once $(Q(\mu_\g(\SC)),W(\mu_\g(\SC)))=(\mu_\g(Q(\SC)),\mu_\g(W(\SC)))$ is proven.\\

First, we focus on the case of $\g$ and just two intersecting intervals $\tau_+$ and $\tau_-$, which intersect $\g$ in two points $p_+$ and $p_-$ with opposite signs. The core computations in the general case essentially reduce to this situation. We have depicted this configuration in Figure \ref{fig:CExchange_TwoBefore} (left), where the corresponding local quiver is drawn beneath the configuration. The local quiver is the linear $A_3$-quiver, with a unique arrow $p_+$ from (the vertex associated to) $\tau_+$ to $\g$, and a unique arrow $p_-$ from $\g$ to $\tau_-$. In Figure \ref{fig:CExchange_TwoBefore} (and the upcoming figures in this proof) we always identify the right hand side of the figure with the left hand side via the identity map: these are all configurations drawn in the resulting cylinder; indeed, $\g$ is a circle and it is depicted as a flat horizontal segment with its right endpoint being identified with its left endpoint.

\begin{center}
	\begin{figure}[H]
		\centering
		\includegraphics[scale=0.65]{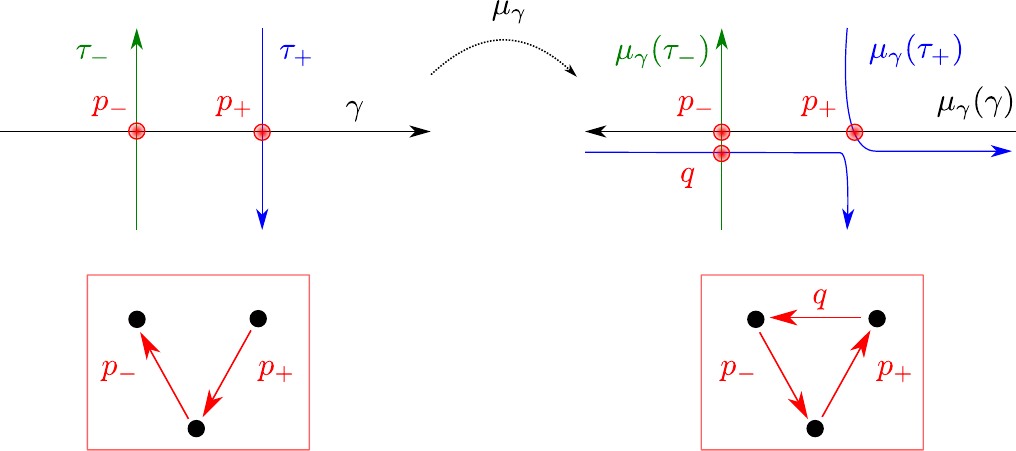}
		\caption{The effect of $\g$-exchange in the case of exactly one positive and one negative intersections. The configuration before the $\g$-exchange is depicted on the left, and the configuration after the $\g$-exchange is depicted on the right. The local quivers recording the intersection patterns are drawn under the configurations.}
		\label{fig:CExchange_TwoBefore}
	\end{figure}
\end{center}

After performing a $\g$-exchange, according to Definition \ref{def:exchange}, the resulting configuration is as depicted in Figure \ref{fig:CExchange_TwoBefore} (right). The local quiver, which is drawn right beneath the configuration, is the quiver obtained by quiver mutation at (the vertex corresponding to $\g$), according to Definition \ref{def:QP_mutation}. Indeed, the previous two intersection points $p_\pm$ persist in the configuration but since the orientation of $\g$ is opposite, their associated arrows go from $\tau_-$ to $\g$, for $p_-$, and from $\g$ to $\tau_+$, for $p_+$. As illustrated in Figure \ref{fig:CExchange_TwoBefore}, a third intersection point $q$ is created after the $\g$-exchange: it is an intersection between $\tau_-$ and $\tau_+$. This yields a new arrow $q$ in the quiver which is precisely the composite arrow $q=[p_-p_+]$. In conclusion, for these local configurations, we have verified that $Q(\mu_g(\SC))$ equals $\mu_g(Q(\SC))$.

Let us now study the change of the potential $W(\SC)$ under $\g$-exchange, also in these particular configurations just with $\g$, $\tau_+$ and $\tau_-$. For that we need to understand how polygons bounded by $\SC$ change under $\g$-exchange. The key image is Figure \ref{fig:CExchange_TwoRegions}, that we now explain.

\begin{center}
	\begin{figure}[H]
		\centering
		\includegraphics[scale=0.6]{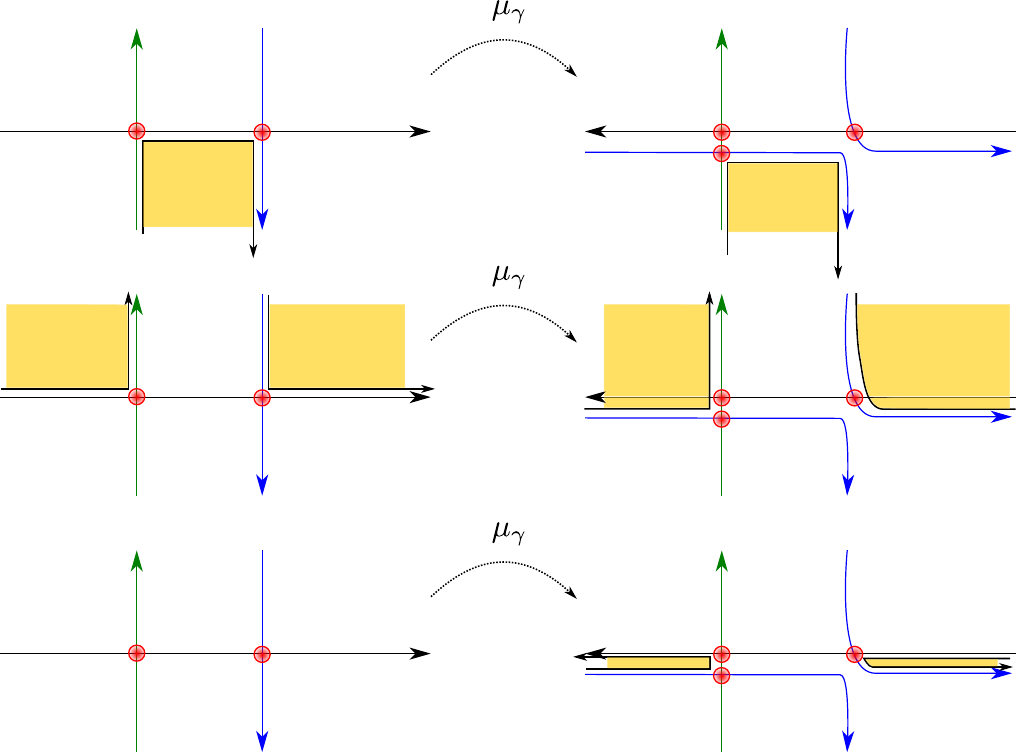}
		\caption{(Left) Potential pieces of polygons in $\SC$ that have $p_-$ and $p_+$ as vertices: the regions are highlighted in yellow. The diagram in the third row of this left column represents an empty region. (Right) Potential pieces of polygons in $\SC$ that have $p_-,p_+$ and $q$ as vertices, now after a $\g$-exchange.}\label{fig:CExchange_TwoRegions}
	\end{figure}
\end{center}

Before the $\g$-exchange, the potential $W(\SC)$ records polygons in $\SC$. Consider the intersection of any such polygon recorded by $W(\SC)$ with the region $U$ and assume again that we obtain the configuration with just $\g$, $\tau_+$ and $\tau_-$, intersecting as above. It suffices to focus on polygons that use $p_-$ and $p_+$. In this configuration, there are exactly two, drawn in the first and second rows of Figure \ref{fig:CExchange_TwoRegions} (left). The potential $W(\SC)$ records such polygons with monomials (on the arrows of $\SC$) that contain $p_-p_+$.

After the $\g$-exchange, as drawn in Figure \ref{fig:CExchange_TwoRegions} (right), those polygons still exist but now must use the new intersection point $q$.\footnote{From the perspective of $H_1(U;\Z)$, this intersection point appears because of the Picard-Lefschetz formula in its most elementary setting: explaining how homology classes  change under Dehn twists.} Indeed, the boundary conditions for the polygons are given (entering in $\tau_\pm$ and exiting in $\tau_\mp$) and there is a unique manner in which those region can exist in the configurations of Figure \ref{fig:CExchange_TwoRegions} (right). This is precisely the term $[W(\SC)]$ in Definition \ref{def:QP_mutation} of the mutation of the potential: each time we see $p_-p_+$ in a monomial in $W(\SC)$ we must substitute it by the composite arrow $q=[p_-p_+]$. There is an additional contribution to $W(\mu_\g(\SC))$: there is a triangle, as drawn in the third row of Figure \ref{fig:CExchange_TwoRegions}. Since its vertices are $q,p_+$ and $p_-$, it contributes the monomial $qp_+p_-$ to $W(\mu_\g(\SC))$, which is precisely the $\Delta_\g$ term in Definition \ref{def:QP_mutation}. Therefore, we have
$$W(\mu_\g(\SC))=[W(\SC)]+qp_+p_-=[W(\SC)]+\Delta_\g=\mu_\g(W(\SC)).$$

Thus far we have argued  $(Q(\mu_\g(\SC)),W(\mu_\g(\SC)))=(\mu_\g(Q(\SC)),\mu_\g(W(\SC)))$ for the configuration with $\g$, $\tau_+$ and $\tau_-$ as in Figure \ref{fig:CExchange_TwoBefore}. The general case is concluded as follows.

Consider a neighborhood $U$ of $\g$ such that\footnote{Such a neighborhood always exists because $\SC$ has finitely many curves and they all intersect transversely.} the configuration $\SC$ intersected with $U$ is as depicted in Figure \ref{fig:CExchange_Before}. That is, only curves $\g_i\in\SC$ that intersect $\g$ do intersect $U$ and the intersections are (up to planar isotopy) straight vertical segments that either point upwards or downwards and only intersect $\g$ once (and these segments do not intersect each other in $U$).

\begin{center}
	\begin{figure}[H]
		\centering
		\includegraphics[scale=0.6]{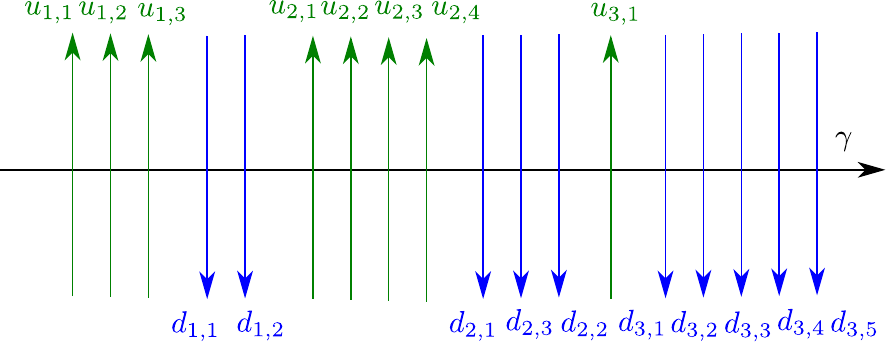}
		\caption{The general configuration $\SC$ in a neighborhood of $\g$ before a $\g$-exchange.}\label{fig:CExchange_Before}
	\end{figure}
\end{center}

Choose an arbitrary but fixed point $g\in\g$. As we scan the arrows starting at $g$ and move in the direction given by the orientation of $\g$, there will be:

\begin{itemize}
    \item[-] A number of blocks $U_1,\ldots,U_v$, $v\in\N$, of arrows pointing upward, each with $u_i:=|U_i|$ arrows, $i\in[v]$. The curves in $U_i$ will be labeled by $u_{i,j}$, $i\in[v]$, $j\in[u_i]$, where the index $j$ is ordered left to right as we transverse $\g$ according to its orientation.\\

    \item[-] A number of blocks $D_1,\ldots,D_e$, $e\in\N$, of arrows pointing upward, each with cardinality $d_i:=|D_i|$, $i\in[e]$. The curves in $D_i$ will be labeled by $d_{i,j}$, $i\in[e]$, $j\in[d_i]$, where the index $j$ is ordered left to right as we transverse $\g$ according to its orientation.
\end{itemize}

Figure \ref{fig:CExchange_After} illustrates this notation in a specific example. Now, for every pair of curves $u_{i,j}$ and $d_{l,k}$, the configuration of three curves $\gamma$, $u_{i,j}$ and $d_{l,k}$ is precisely the local configuration we studied above, as in Figure \ref{fig:CExchange_TwoBefore} (left). Since a $\g$-exchange fixes all the $u_{i,j}$ and applies a Dehn twist to all the $d_{l,k}$, the behaviour of {\it any} such pair under $\g$-exchange is identical and it coincides with the one studied above, see Figure \ref{fig:CExchange_TwoBefore} (right). As emphasized in Subsection \ref{sssec:definition_exchange}, a choice of representative for the (mapping class of the) Dehn twist must be made: we specifically choose a compactly supported diffeomorphism $f\in \mbox{Diff}^c(U)$ which acts on the set of curves $d_{l,k}$ as drawn\footnote{We drew the blue curves $d_{l,k}$ as piecewise smooth curves for convenience: the curves are smooth, and we smooth this PL depiction by an arbitrarily smoothing of the corners, so no further intersections are created. } in Figure \ref{fig:CExchange_After}. Namely, each curve $f(d_{l,k})$ intersects each $u_{i,j}$ and $\gamma$ exactly once.\footnote{Such a choice exists: the standard model for the Dehn twist defined via the geodesic flow in the punctured disk bundle (extended by the antipodal map to the zero section) has this property.}

\begin{center}
	\begin{figure}[H]
		\centering
		\includegraphics[scale=0.6]{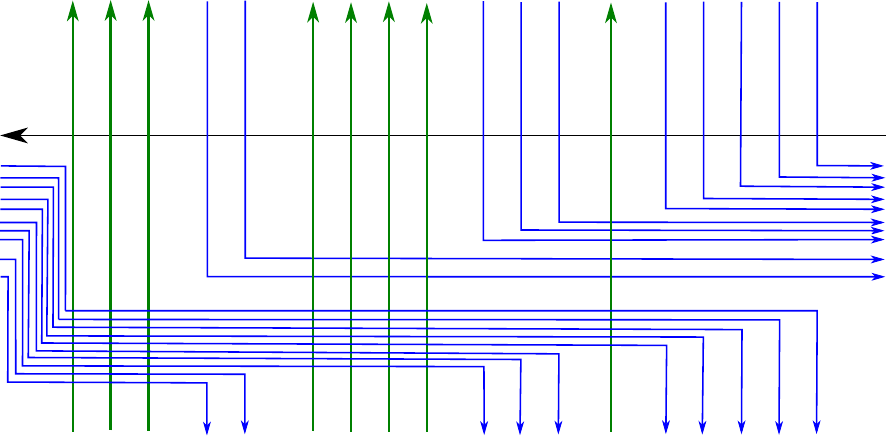}
		\caption{The general configuration $\SC$ in a neighborhood of $\g$ after a $\g$-exchange.}\label{fig:CExchange_After}
	\end{figure}
\end{center}

Since the only intersections before the $\g$-exchange are between $u_{i,j}$ and $\g$, or $d_{l,k}$ and $\g$, the new intersections afterwards will be precisely as recorded by the local configuration in Figure \ref{fig:CExchange_TwoBefore} (right). Note that this applies to every pair of $u_{i,j}$, $d_{l,k}$ and hence for every pair of arrows in-and-out of $\g$ in $Q(\SC)$ we will add the composite arrow (and flip that pair of arrows). Thus the quiver changes according to a mutation at $\g$ also in this general case. Similarly, since the only regions in Figure \ref{fig:CExchange_Before} using intersection points with $\g$ are precisely those entering a $u_{i,j}$ and exiting a $d_{l,k}$, or vice-versa, the change of polygons in $\SC$ will be recorded by tracking all the polygons that appear in the local configuration in Figure \ref{fig:CExchange_TwoBefore} (right) for {\it all} pairs of $u_{i,j}$ and $d_{l,k}$. We have already argued that the potential in each local configuration precisely changes as a (non-reduced) QP-mutation, and therefore it is also the case for the potential in the general configuration.
\end{proof}


\section{Non-degeneracy of curve QPs for plabic fences}\label{sec:QP_PlabicFence}

In this section we construct curve configurations $\SC$ from a certain type of plabic graph and show rigidity for their associated curve QP $(Q(\SC),W(\SC))$. Let us start with the plabic graphs that we need:

\begin{definition}\label{def:plabic_fence}
An embedded planar bicolored graph $\bG\sse\R^2$ is said to be a \emph{plabic fence} if it satisfies the following conditions.
\begin{itemize}\setlength\itemsep{0.5em}
    \item[(i)] The vertices of $\bG\sse\R^2$ belong to the standard integral lattice $\Z^2\sse\R^2$, and they are colored in either black or white.
    \item[(ii)] The edges of $\bG\sse\R^2$ belong to the standard integral grid $(\Z\times\R)\cup (\R\times\Z)\sse\R^2$. Edges that are contained in $\Z\times\R$ are said to be \emph{vertical}, and edges that are contained in $\R\times\Z$ are said to be \emph{horizontal}. 
    \item[(iii)] A maximal connected union of horizontal edges is called a \emph{horizontal line}. All horizontal lines must start, on the left, at univalent white vertices with the same $x$-coordinate and must end, on the right, at univalent white vertices with the same $x$-coordinate.
    \item[(iv)] Each vertical edge must end at trivalent vertices of opposite colors, with white on top and black on bottom, and the end points of a vertical edge must be contained in the interior of a horizontal line. In addition, no two vertical edges are contained in the same (vertical) line.
\end{itemize}
\end{definition}

\begin{center}
	\begin{figure}[H]
		\centering
		\includegraphics[scale=0.75]{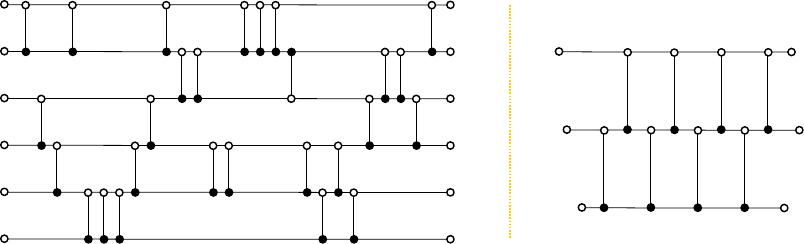}
		\caption{Two plabic fences. The fence on the right encodes $\beta=(\sigma_1\sigma_2)^4$.}\label{fig:PlabicFence_Example}
	\end{figure}
\end{center}

These are a special type of plabic graphs, studied in \cite[Section 12]{FPST}, \cite[Section 2.5]{CasalsWeng22}, \cite[Section 5]{STWZ}, and also \cite{GSW,ShenWeng}, the latter in the context of triangulations of flag configuration diagrams. For a visual instance, two plabic fences $\bG$ are drawn in Figure \ref{fig:PlabicFence_Example}.

\begin{remark}
The definition of plabic fences in \cite{FPST,CasalsWeng22} is more general but, for our purposes, it is without loss of generality that we can work with Definition \ref{def:plabic_fence}. This is due to the fact that cyclic rotation is a quasi-cluster transformation, as shown in \cite{CasalsWeng22,CGGLSS}; see also \cite{ShenWeng}.\hfill$\Box$
\end{remark}

\color{black}
We use the following bijection between plabic fences with $n$ horizontal edges and positive braid words in $n$-strands. For a plabic fence $\bG$ we construct a braid word $\beta(\bG)$ iteratively by scanning the plabic fence left-to-right: starting with the empty word $\beta(\bG)$, when we encounter a vertical edge between the $k$ and $(k+1)$st horizontal edges, counting from the bottom, we add the positive Artin generator $\sigma_k$ to the right of $\beta(\bG)$. Note that we obtain a positive braid word through this process, as all Artin generators being used in this construction are positive. Given a positive braid word $\beta$, the plabic fence that gives the braid word $\beta$ through this process will be denoted by $\bG(\beta)$. 

\color{black}


\subsection{Curve configuration $\SC(\bG)$ of a plabic fence $\bG$}\label{sssec:configuration_plabicfence} A plabic fence gives rise to a curve configuration $\SC(\bG)$ as follows. Consider the conjugate surface $\Sigma(\bG)$ of \cite[Section 1.1.1]{GonKen}; see also \cite[Section 2]{Goncharov_IdealWebs}, \cite[Section 4]{STWZ} or \cite[Section 2.1]{CasalsLi22} for definition and details on conjugate surfaces. For the purposes of this manuscript, this is a (ribbon) surface obtained from a plabic graph by using the three local models in Figure \ref{fig:ConjugateSurface_Models}.\footnote{The boundary of the surface is given by the alternating strand diagram of $\bG$, see \cite{Postnikov,Goncharov_IdealWebs}.} It retracts to $\bG$ and thus $H_1(\Sigma(\bG),\Z)\cong H_1(\bG,\Z)$, the latter being a free $\Z$-module of rank equal to the number of faces of $\bG$.\footnote{A face is a bounded connected component of the complement of the plabic fence in $\R^2$.}

\begin{center}
	\begin{figure}[H]
		\centering
		\includegraphics[scale=0.75]{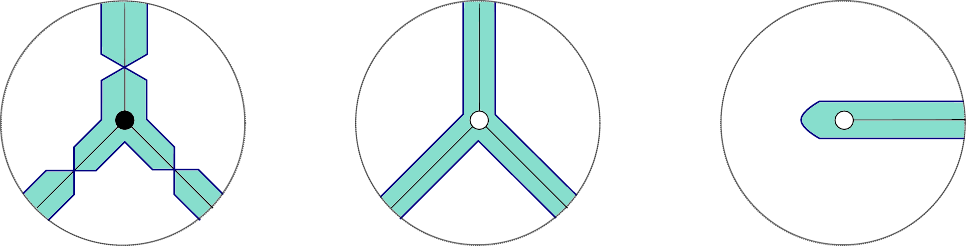}
		\caption{The three local models needed to draw a (projection of a) conjugate surface associated to a plabic fence $\bG$. The boundary of the surface is in dark blue and the surface itself in light blue.}\label{fig:ConjugateSurface_Models}
	\end{figure}
\end{center}

We allow the equivalence in Figure \ref{fig:ConjugateSurface_Models3} for conjugate surfaces. Combinatorially, this move is allowed in the literature so as to keep the underlying graph bicolored and with all edges having one black end and one white end, by introducing a white vertex in the middle region of Figure \ref{fig:ConjugateSurface_Models3} (left). The fact that this move does not affect the symplectic geometry in Section \ref{sec:Symplectic}, is verified rather simply\footnote{The move removes/creates a trivial $2$-cycle that would also feature in the potential, therefore the reduced part of the QP quiver of the associated curve configuration will remain invariant.}, e.g.~ it is proven in \cite[Section 2.1.3]{CasalsLi22}. We always consider the diagram for the conjugate surface $\Sigma(\bG)$ after we have applied this move (left to right in Figure \ref{fig:ConjugateSurface_Models3}) to every local double-crossing configuration as in Figure \ref{fig:ConjugateSurface_Models3} (left). That is, we remove any instances of Figure \ref{fig:ConjugateSurface_Models3} (left) in the diagram of $\Sigma(\bG)$, substituting them by Figure \ref{fig:ConjugateSurface_Models3} (right).

\begin{center}
	\begin{figure}[H]
		\centering
		\includegraphics[scale=0.85]{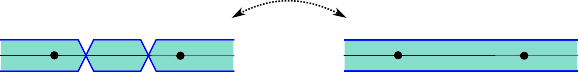}
		\caption{An equivalence of conjugate surfaces. This corresponds to a non-dangerous tangency in the alternating strand diagram.}\label{fig:ConjugateSurface_Models3}
	\end{figure}
\end{center}

Therefore, near a face $F\sse\bG$, the conjugate surface can be drawn as in Figure \ref{fig:PlabicFence_AddingCrossing}; the face $F$ is the central face, immediately to the left of the green vertical edge. Note that this is the general pattern near a face $F$: bounded between two vertical edges at the same level, with a series of vertical edges arriving with a black vertex from the level right above, and a series of vertical edges arriving with a white vertex from the level right below. We use the word {\it level} to denote the horizontal space (which might contain vertical edges) between two consecutive horizontal lines of the plabic fence. In particular, once we turn the plabic fence into a positive braid, the (height of the) level of a vertical edge corresponds
to the index of the corresponding Artin generator. The levels are indexed bottom to top, with the bottom level being the first level.

\begin{center}
	\begin{figure}[H]
		\centering
		\includegraphics[scale=0.9]{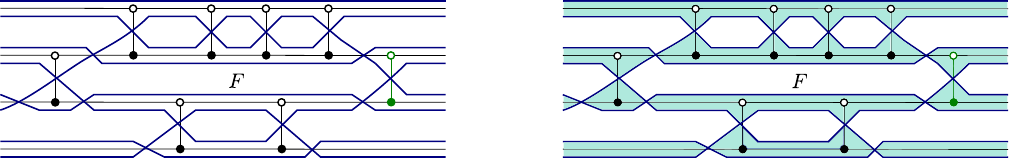}
		\caption{(Left) A plabic fence in black with the boundary of the conjugate surface drawn in blue. (Right) The conjugate surface associated to the plabic fence, with its interior drawn in turquoise.}\label{fig:PlabicFence_AddingCrossing}
	\end{figure}
\end{center}

The curve configuration $\SC(\bG)$ from $\bG$ is built as follows:

\begin{definition}\label{def:configuration_from_fence}
Let $\bG$ be a plabic fence and $\Sigma(\bG)$ its conjugate surface. The curve configuration $\SC(\bG)$ is the configuration of embedded closed curves in $\Sigma(\bG)$ constructed as follows:

\begin{itemize}
    \item[(i)] There is a curve $\g_F\in\SC(\bG)$ for every face $F\sse\bG$.

    \item[(ii)] Each curve $\g_F\sse\Sigma(\bG)$ is obtained by applying the local models in Figure \ref{fig:ConjugateSurface_Models2} near each vertex of $\bG$ and connecting the resulting segments by following the boundary of (the diagram of) $\Sigma(\bG)$ in a planar parallel manner.
\end{itemize}
The configuration $\SC(\bG)$ of curves in $\Sigma(\bG)$ is said to be the (curve) configuration associated to $\bG$.\hfill$\Box$
\end{definition}

\begin{center}
	\begin{figure}[H]
		\centering
		\includegraphics[scale=0.75]{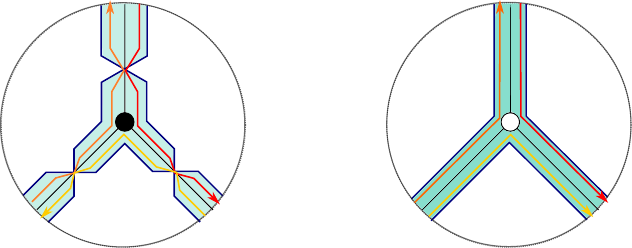}
		\caption{The two local models needed to associate a closed curve $\g_F$ in $\Sigma(\bG)$ to every face of $\bG$. In the left picture, two curves in $\SC(\bG)$ intersect transversely in the conjugate surface (inside the twisted band). For instance, the orange and red curve intersect transversely: the twist in the band might make it appear otherwise but the intersection is transverse in the conjugate surface. The convention for the twist is fixed in this local picture and so are the intersection signs.}\label{fig:ConjugateSurface_Models2}
	\end{figure}
\end{center}

Figure \ref{fig:PlabicFence_AddingCrossing2} (left) depicts a plabic fence with a face $F$ and Figure \ref{fig:PlabicFence_AddingCrossing2} (right) depicts its associated closed embedded curve $\g_F$ in $\Sigma(\bG)$. Note again that this is the generic form of any plabic fence near a face $F$, possibly with different numbers of vertical edges at the levels right above and below, which would in any case not change $\g_F$.


\begin{center}
	\begin{figure}[H]
		\centering
		\includegraphics[scale=0.9]{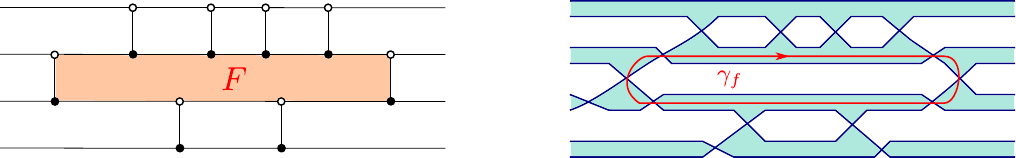}
		\caption{(Left) A plabic fence $\bG$ in black with a face $F$ highlighted. (Right) The curve $\g_F$ in $\Sigma(\bG)$ associated to the face $F$.}\label{fig:PlabicFence_AddingCrossing2}
	\end{figure}
\end{center}

Note that we can (and do) draw the curves in $\SC(\bG)$ such that two curves $\g_{F_1}$ and $\g_{F_2}$ in $\SC(\bG)$ only intersect at the precise twist point of the ribbon diagram for $\Sigma(\bG)$. That is, the only intersections of curves in $\SC(\bG)$ that occur are of the form depicted in Figure \ref{fig:PlabicFence_AddingCrossing4}.

\begin{center}
	\begin{figure}[H]
		\centering
		\includegraphics[scale=1.1]{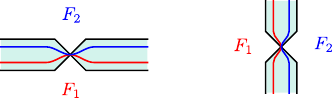}
		\caption{The two local models near an intersection point of two curves in $\SC(\bG)$.}\label{fig:PlabicFence_AddingCrossing4}
	\end{figure}
\end{center}

In the next subsection we introduce a QP $(Q(\bG),W(\bG))$ that will be useful to understand the properties of the curve QP associated $\SC(\bG)$.

\subsection{The QP associated to a plabic fence}\label{ssec:QP_plabicfence} There are several descriptions of quivers $Q(\bG)$ associated to a plabic fence $\bG$, see \cite{BLL18,CasalsWeng22,FPST,GonKen,GSW,Postnikov} for some; they are all equivalent. For specificity, we explicitly describe the quiver $Q(\bG)$ in the upcoming Defininion \ref{def:quiver_plabicfence}.

A piece of notation: suppose that $e\sse\bG$ is a vertical edge at level $k$, we denote by $F_e$ the face of $\bG$ that has $e$ as its right vertical edge. This face $F_e$ is unique or it does not exist. By definition, a black pente-row (resp.~white pente-row) is a consecutive collection of black vertices in the same horizontal line of $\bG$ such that:

\begin{itemize}
    \item[-] There must be two white (resp.~black) vertices bounding it: one white vertex at its left, and one white vertex at its right.\footnote{This is in the spirit of pente capture configurations, following the board game Pente.}\\

    \item[-] Each connected component of $\R^2\setminus\bG$ whose closure contains any segment between two vertices of the black vertices above or between a black vertex and one of the two white vertices above must be a  face.
\end{itemize}

The total number of black (resp.~white) vertices in a black (resp.~white) pente-row is said to be its length. See Figure \ref{fig:PlabicFence_Row1} for a length four black pente-row and a length four white pente-row, where we have marked the connected components of $\R^2\setminus\bG$ that must be faces with a blue dot. The rightmost face in a black (resp.~white) pente-row, which has a left corner at the rightmost black (resp.~white) vertex, is said to be its right face.


\begin{center}
	\begin{figure}[H]
		\centering
		\includegraphics[scale=0.9]{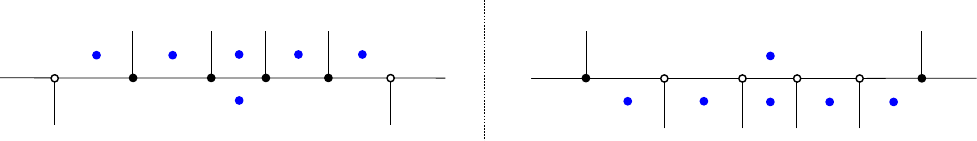}
		\caption{(Left) A black pente-row. (Right) A white pente-row.}\label{fig:PlabicFence_Row1}
	\end{figure}
\end{center}

\begin{definition}\label{def:quiver_plabicfence} Let $\bG$ be the plabic fence with $n$ horizontal edges, its associated QP $(Q(\bG),W(\bG))$ is defined as follows. The quiver $Q(\bG)$ has vertex set the set of faces of $\bG$. The arrow set of $Q(\bG)$ is inductively described as follows, scanning $\bG$ left to right:

\begin{itemize}
    \item[(i)] If $\bG$ is the empty plabic fence, then the arrow set of $Q(\bG)$ is empty.\\

    \item[(ii)] Choose a vertical edge $e\sse\bG$ at level $k$ and assume that the arrow set of $Q(\bG_{<e})$ is $A_{<e}$, where $\bG_{<e}$ is the plabic subfence of $\bG$ consisting of those vertical edges to the left of $e$. If $F_e$ does not exist, the arrow set of $Q(\bG_{\leq e})$, where $\bG_{\leq e}=\bG_{<e}\cup\{e\}$, is defined to be $A_{<e}$.\\
    
    \noindent If $F_e$ exists, the arrow set of $Q(\bG_{\leq e})$ is defined to be $A_{<e}$ union the following possible arrows:\\

    \begin{itemize}
        \item[(a)] Let $d$ be the left vertical edge of $F_e$. If $F_d$ exists, then we add an arrow $[de]$ from $F_d$ to $F_e$.\\

       \item[(b)] Let $d^\uparrow$ be the first vertical edge in $\bG$ at level $(k+1)$ to the right of $d$. If $d^\uparrow$ and $F_{d^\uparrow}$ exist, then we add an arrow $[ed^\uparrow]$ from $F_e$ to $F_{d^\uparrow}$. See Figure \ref{fig:PlabicFence_Quiver} (left) for such an arrow, marked with a pink $(Z)$ pattern.\\

       \item[(c)] Let $d^\downarrow$ be the first vertical edge in $\bG$ at level $(k-1)$ to the right of $d$. If $d^\downarrow$ and $F_{d^\downarrow}$ exist, then we add an arrow $[ed^\downarrow]$ from $F_e$ to $F_{d^\downarrow}$. See Figure \ref{fig:PlabicFence_Quiver} (left) for such an arrow, marked with a red $(S)$ pattern.\\
    \end{itemize}

 \noindent If the hypotheses in these cases are not met, no arrows are added: e.g.~if $F_e$ or $F_d$ do not exist in case (a), we do not add any arrows at that stage.\\
\end{itemize}

\begin{center}
	\begin{figure}[H]
		\centering
		\includegraphics[scale=0.9]{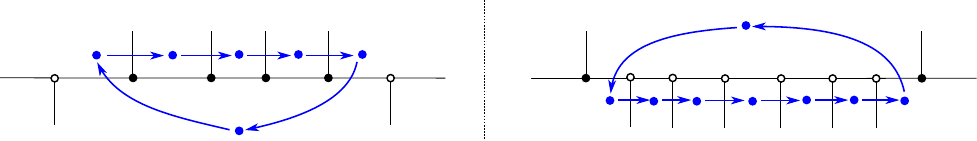}
		\caption{(Left) A black pente-row with its associated cycle around it. (Right) A white pente-row with its associated cycle around it.}\label{fig:PlabicFence_Row1_Example}
	\end{figure}
\end{center}

Note that $Q(\bG)$ around a black pente-row of $\bG$ has a (planar) clockwise cycle. The length of this cycle equals the number of black vertices in the black pente-row plus two. See Figure \ref{fig:PlabicFence_Row1_Example} (left) for an instance of such a quiver cycle. Similarly, the quiver $Q(\bG)$ around a white pente-row has a (planar) counter-clockwise cycle. The length of such a cycle equals the number of white vertices in the white pente-row plus two. See Figure \ref{fig:PlabicFence_Row1_Example} (right) for an example of such a quiver cycle. See Figure \ref{fig:PlabicFence_Quiver} (right) for an instance of $Q(\bG)$.

\begin{center}
	\begin{figure}[H]
		\centering
		\includegraphics[scale=0.9]{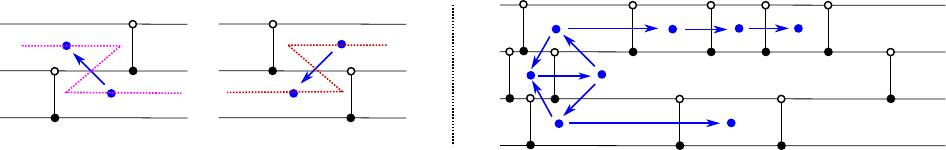}
		\caption{(Left) The two local patterns for arrows in $Q(\bG)$ that are not horizontal. (Right) An example of $Q(\bG)$.}\label{fig:PlabicFence_Quiver}
	\end{figure}
\end{center}

The potential $W(\bG)$ is similarly described inductively. If $\bG$ is the empty plabic fence, then $W(\bG)=0$. For each vertical edge $e$ added to the right of $\bG_{<e}$ as above, the potential is defined as
$$W(\bG_{\leq e})=W(\bG_{<e})+P_{black}(e)-P_{white}(e),$$
where the monomials in $P_{black}(e)$ and $P_{white}(e)$ are described as follows. By definition, $P_{black}(e)$, resp.~$P_{white}(e)$, is the (cyclic) monomial in the arrows of $Q(\bG)$ encoding the planar cycle in $Q(\bG)$ associated to the unique black (resp.~white) pente-row with right face equal $F_e$, if such pente-row exists and else it equals zero. Note that monomials in $P_{white}(e)$ have an overall minus sign in front due to the counter-clockwise orientation of their cycles.\hfill$\Box$
\end{definition}

\begin{remark}
Note that a pente-row of length one with right face $F_e$ gives rise to a triangle cycle in $Q(\bG)$ and thus a cubic monomial in the potential. Thus this construction generalizes known quivers with potentials from bipartite graphs, cf.~\cite[Section 5.1.2]{Goncharov_IdealWebs}. For instance, the case of $\beta=w_0$, the longest element in the symmetric group $S_n$, recovers the potential associated to the $n$-triangulation of a triangle, see \cite[Section 1.1]{Goncharov_IdealWebs} or \cite[Section 3.1]{CasalsZaslow}. \hfill$\Box$
\end{remark}

\begin{remark} For a non-inductive definition of $Q(
\bG)$, cf.~\cite[Section 1]{GSW}. The inductive nature of Definition \ref{def:quiver_plabicfence} is useful in our inductive proof of Propositions \ref{lem:QP_coincide} and \ref{thm:QPnondeg}. Also, there is no need to keep track of frozen vertices in Definition \ref{def:quiver_plabicfence}. There are natural generalizations to iced quivers, e.g..~see \cite{Pressland20}, \cite{GSW} or \cite{CasalsWeng22}.\hfill$\Box$
\end{remark}

The main reason to introduce $(Q(\bG),W(\bG))$ is to be able to describe the QP associated to $\SC(\bG)$ combinatorially in terms of $\bG$. Indeed, we have:

\begin{prop}\label{lem:QP_coincide}
Let $\bG$ be a plabic fence and $\SC=\SC(\bG)$ its associated curve configuration. Then the curve QP $(Q(\SC),W(\SC))$ equals the QP $(Q(\bG),W(\bG))$.
\end{prop}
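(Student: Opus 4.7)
The plan is to proceed by induction on the number of vertical edges of $\bG$, mirroring the inductive definition of $(Q(\bG),W(\bG))$ in Definition \ref{def:quiver_plabicfence}. The base case is the empty plabic fence, which has no faces and no intersections in $\SC(\bG)$, so both QPs are trivially equal. For the inductive step, we assume the equality $(Q(\SC(\bG_{<e})),W(\SC(\bG_{<e})))=(Q(\bG_{<e}),W(\bG_{<e}))$ and add a vertical edge $e$ at level $k$ to obtain $\bG_{\leq e}=\bG_{<e}\cup\{e\}$; we then compare the changes on both sides.

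First I would match the quivers. In the curve configuration, adding $e$ either creates a new curve $\gamma_{F_e}$ (when $F_e$ exists) or simply alters the ribbon locally without creating any new curve. In the latter case, no new intersection points are created in $\SC(\bG_{\leq e})$, matching the fact that no new arrows appear in $Q(\bG_{\leq e})$. When $F_e$ does exist, the new curve $\gamma_{F_e}$ is a small loop that, following Figures \ref{fig:ConjugateSurface_Models2} and \ref{fig:PlabicFence_AddingCrossing4}, transversely intersects at most three previously existing curves: the curve $\gamma_{F_d}$ for $d$ the left vertical edge of $F_e$, and the curves $\gamma_{F_{d^\uparrow}}$ and $\gamma_{F_{d^\downarrow}}$ when those faces exist. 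A direct local computation in the two models of Figure \ref{fig:PlabicFence_AddingCrossing4} shows that each such transverse intersection is a single point of positive sign, with the intersection direction that produces an arrow $[de]$ from $F_d$ to $F_e$ (the horizontal case), an arrow $[ed^\uparrow]$ from $F_e$ to $F_{d^\uparrow}$ (the $Z$-pattern), and an arrow $[ed^\downarrow]$ from $F_e$ to $F_{d^\downarrow}$ (the $S$-pattern). These coincide precisely with the arrows added in cases (a)--(c) of Definition \ref{def:quiver_plabicfence}, so $Q(\SC(\bG_{\leq e}))=Q(\bG_{\leq e})$.

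Next I would match the potentials. The key observation is that the only new embedded polygons bounded by $\SC(\bG_{\leq e})$ and not already present for $\SC(\bG_{<e})$ are those whose boundary uses the new curve $\gamma_{F_e}$. Reading off the conjugate surface near $F_e$, these polygons are in bijection with the black and white pente-rows whose right face is $F_e$: a black pente-row of length $\ell$ produces a clockwise $(\ell+2)$-gon, while a white pente-row produces a counter-clockwise $(\ell+2)$-gon. In fact, the embedded polygons correspond exactly to the planar cycles around such pente-rows in $Q(\bG)$, which in turn is why at most one black and one white pente-row with right face $F_e$ can occur. It remains to verify the sign: for a black pente-row, the polygon is clockwise and I would check, by direct inspection of the shading rule in Figure \ref{fig:OrientationSigns} near the two local models in Figure \ref{fig:PlabicFence_AddingCrossing4}, that none of its vertex quadrants are shaded, giving the positive coefficient that matches $P_{black}(e)$; for a white pente-row the polygon is counter-clockwise with all of its vertex quadrants shaded, yielding two sign flips (one from orientation, $\ell+2$ from shadings of the correct parity) that total to a negative coefficient matching $-P_{white}(e)$.

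The main obstacle I expect is the sign bookkeeping in the previous paragraph: the combination of the orientation sign $\sigma(v_1\dots v_\ell)$, the clockwise versus counter-clockwise convention in Definition \ref{def:quiverC} (which already assigns opposite overall signs to $\Gamma_\ell^+$ and $\Gamma_\ell^-$), and the shading rule near the two distinct local intersection models of Figure \ref{fig:PlabicFence_AddingCrossing4} all have to be unpacked carefully at each vertex of every pente-row polygon. Once the signs are verified for a length-one pente-row (the triangle case), the general case should follow because each additional vertex in the row contributes a quadrant of the same color as the previous one, so parity is preserved as $\ell$ grows. One additional point worth checking is that there are no further new polygons: any hypothetical new polygon using $\gamma_{F_e}$ that is not of pente-row type would have to cross $\gamma_{F_e}$ at a non-adjacent edge or use a quadrant incompatible with Definition \ref{def:configuration}(iii), and this can be ruled out by the local picture of the conjugate surface near $F_e$ depicted in Figures \ref{fig:PlabicFence_AddingCrossing} and \ref{fig:PlabicFence_AddingCrossing2}.
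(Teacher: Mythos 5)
Your overall strategy is the same as the paper's: scan $\bG$ left to right, match the at most three new intersections of $\g_{F_e}$ with the arrows of cases (a)--(c) of Definition \ref{def:quiver_plabicfence}, identify the new polygons with the black and white pente-rows whose right face is $F_e$, and rule out other polygons using the local structure of the conjugate surface at ribbon twists. That skeleton is correct and matches the paper.

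The genuine gap is in the sign verification for the potential, exactly the step you flag as the main obstacle. With the paper's conventions (as used in the computations of Proposition \ref{lem:triplepoint} and Lemma \ref{lem:bigon}: a clockwise polygon enters $W(\SC)$ with a factor $+\sigma$, a counter-clockwise one with $-\sigma$, and each shaded quadrant contributes $-1$ to $\sigma$), matching $W(\bG_{\leq e})=W(\bG_{<e})+P_{black}(e)-P_{white}(e)$ uniformly in the row length forces each pente-row polygon to use an \emph{even} number of shaded quadrants, independently of $\ell$. Your claimed pattern for white pente-rows --- a counter-clockwise $(\ell+2)$-gon with all $\ell+2$ vertex quadrants shaded --- gives total sign $-(-1)^{\ell+2}=(-1)^{\ell+1}$, which equals $-1$ only when $\ell$ is even; for a white pente-row of length one (the triangle case, which occurs already in $\bG((\sigma_1\sigma_2)^4)$) it would produce $+1$, contradicting the required $-P_{white}(e)$. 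Relatedly, your closing claim that ``each additional vertex contributes a quadrant of the same color as the previous one, so parity is preserved as $\ell$ grows'' is backwards: if every vertex of the polygon used a shaded quadrant, then lengthening the row by one would flip the parity of $\sigma$ and hence the coefficient, so the sign could not be stable in $\ell$. Thus either your description of which quadrants the pente-row polygons actually use is wrong, or the conclusion drawn from it is; in either case the sign check must be redone by inspecting the shading rule of Figure \ref{fig:OrientationSigns} in the two local intersection models of Figure \ref{fig:PlabicFence_AddingCrossing4} and verifying that every pente-row polygon uses an even number of shaded quadrants, so that black rows contribute $+P_{black}(e)$ and white rows $-P_{white}(e)$ for all lengths. (The paper's own proof is terse on this point, but your argument as written does not establish it and, taken literally, is inconsistent for odd-length white rows.)
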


\begin{proof}
Let us scan the plabic fence $\bG$ left to right and compare $Q(\bG)$ and $Q(\SC)$ as we do so. Let $e\sse\bG$ be a vertical edge at level $k$ and $F=F_e$ its associated face.\footnote{If $F_e$ does not exist, there is no curve added nor the quiver gains vertices, thus both coincide.} Since $Q(\bG)$ and the curves in $\SC$ associated to faces at level $k$ are local in the union of levels $(k-1),k$ and $(k+1)$st, it suffices to consider the piece of plabic fence in the neighborhood of $F$ consisting of all faces that share a vertical edge (there is at most one such face) or a piece of a horizontal edge with $F$. An example of a neighborhood of $F$ is drawn in Figure \ref{fig:PlabicFence_AddingCrossing3}, with the relevant set of curves in $\SC$ on its left and the quiver on its right. Note that we are scanning left to right and assuming that the vertical edge $e$ is the rightmost edge of $\bG$ at this stage. For the plabic fence in Figure \ref{fig:PlabicFence_AddingCrossing3} (left), this is the edge depicted in green in Figure \ref{fig:PlabicFence_AddingCrossing}.

\begin{center}
	\begin{figure}[H]
		\centering
		\includegraphics[scale=0.9]{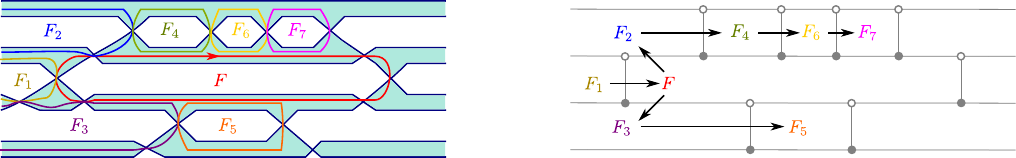}
		\caption{(Left) A neighborhood of a face $F$ with the conjugate surface depicted and all the curves $\g_{F_i}$ for the faces $F_i$ of $\bG$ adjacent to $F$. (Right).}\label{fig:PlabicFence_AddingCrossing3}
	\end{figure}
\end{center}

The curve $\g_F$ associated to the new vertex of $Q(\bG_{\leq e})$ not in  $Q(\bG_{<e})$ intersects the other curves $\g_{F_i}$, $F_i$ faces in $\bG_{<e}$, when there is a ribbon twist. (Recall Figure \ref{fig:PlabicFence_AddingCrossing4}.) The curve $\g_F$ only traverses twists that respectively connect to:

\begin{itemize}
    \item[-] The face $F_d$, where $d$ is the left vertical edge of $F_e$, if it exists.

    \item[-] The face $F_{d^\uparrow}$, as in Definition \ref{def:quiver_plabicfence}, if it exists.

    \item[-] The face $F_{d^\downarrow}$, as in Definition \ref{def:quiver_plabicfence}, if it exists.

    \item[-] The region to the right of $e$ that, by hypothesis at this stage ($e$ being the rightmost edge we have scanned) is unbounded and thus not a face.
\end{itemize}

The three intersections $\g_{d}\cap\g_{e}$, $\g_{d^\uparrow}\cap\g_{e}$ and $\g_{d^\downarrow}\cap\g_{e}$ are precisely recorded by the arrows $[de]$, $[ed^\uparrow]$ and $[de],[ed^\downarrow]$, if they respectively exist. The direction of the arrow is given by the intersection sign, which is positive for $\g_{d}\cap\g_{e}$ and negative for $\g_{d^\uparrow}\cap\g_{e}$ and $\g_{d^\downarrow}\cap\g_{e}$. Therefore $Q(\bG)$ and $Q(\SC)$ coincide.

In order to compare $W(\bG)$ and $W(\SC)$ it suffices to note that a polygon bounded by $\SC$ in the conjugate surface $\Sigma(\bG)$ must have vertices be as in Figure \ref{fig:PlabicFence_AddingCrossing5}. \color{black} Indeed, consider for instance Figure \ref{fig:PlabicFence_AddingCrossing4} (left), which depicts a piece of the conjugate surface (a twisted band) with two pieces of curves from $\SC$ in it. The curves from $\SC$ are drawn in blue and red in Figure \ref{fig:PlabicFence_AddingCrossing4}, the conjugate surface is highlighted in a lighter blue. These two red and blue pieces of curves intersect at a unique point. Near that intersection point, a polygon which has a vertex at that unique point must equal the yellow region depicted in Figure \ref{fig:PlabicFence_AddingCrossing5} (upper left) or Figure \ref{fig:PlabicFence_AddingCrossing5} (upper right). That is, it will either be to the left of the intersection point (and bounded by the red and blue curves) in the former case, or to its right, in the latter case. The case of Figure \ref{fig:PlabicFence_AddingCrossing4} (left), where the twisted band is drawn vertical, is analogous, and leads to the polygons in Figure \ref{fig:PlabicFence_AddingCrossing5} (lower left) and Figure \ref{fig:PlabicFence_AddingCrossing5} (lower right).\\


\color{black}
Therefore, polygons bounded by $\SC$ must correspond to pente-rows. Indeed, a pente-row gives a unique polygon bounded by $\SC$ by drawing $\Sigma(\bG)$ near the pente-row and cutting at the ribbon twists around the pente-row; each ribbon twist corresponds to a vertex of the polygon. Such cuts give an embedded planar region and there is a unique polygon embedded in it with vertices given by the (locations where we cut the) ribbon twists. Note that we cut as many vertical ribbon twists as the length of the pente-row and we always cut two horizontal ribbon twists, i.e.~the polygon has as many sides as the length of the pente-row plus two. The polygon is oriented clockwise, resp.~counter-clockwise, if the row is black, resp.~white.

\begin{center}
	\begin{figure}[H]
		\centering
		\includegraphics[scale=1.1]{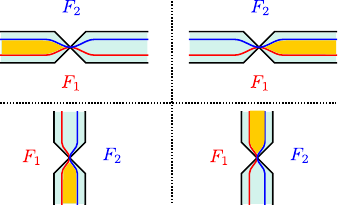}
		\caption{\color{black} In yellow, pieces of regions in the conjugate surface that have the potential to give a polygon bounded by $\SC$. As before, the conjugate surface is drawn in light blue (with a twisted band) and the two curves from $\SC$ are drawn in blue and red. In the figures of the first row, a yellow polygon takes either the left side (depicted on the upper left figure) or the right side (depicted on the upper right figure) of the unique intersection point in the ribbon twist. In the figures of the second row, a yellow polygon takes either the lower side (depicted on the lower left figure) or the upper side (depicted on the lower right figure) of the unique intersection point in the ribbon twist.\color{black}}
\label{fig:PlabicFence_AddingCrossing5}
	\end{figure}
\end{center}

Conversely, the conjugate surface $\Sigma(\bG)$ has a ribbon twist in any segment (horizontal or vertical) between a black and a white vertex, and no twist if a segment is between two vertices of the same color. Embeddedness of the polygon in $\Sigma(\bG)$ implies that it must lie within a region bounded by twists (and no twists on the interior of that region). Since the ends of a polygon must be as in Figure \ref{fig:PlabicFence_AddingCrossing5}, the only possible polygons bounded by $\SC$ must have the form of those around a pente-row. In conclusion, polygons bounded by $\SC$ in $\Sigma(\bG)$ must correspond to pente-rows and the potentials $W(\bG)$ and $W(\SC)$ coincide as we scan $\bG$ left to right.\end{proof}

We conclude this subsection with a combinatorial property satisfied by the quivers $Q(\bG)$. This property will be used in the proof of Proposition \ref{thm:QPnondeg}. The following proof was explained to us by D.~Weng, to whom we are grateful:

\begin{lemma}\label{lem:source}
Let $\bG$ be a plabic fence and $Q(\bG)$ its associated quiver. Consider the rightmost vertical edge $e\in\bG$ and the vertex $F_e\in Q(\bG)_0$ associated to the face $F_e$ immediately to the left of $e$, if it exists. Then there exists a sequence of vertices $(v_1,\ldots,v_k)$ in $v_i\in Q(\bG_{<e})_0$, $i\in[k]$ and $k\in\N$ depending on $e$, such that $\mu_k\cdots\mu_1(Q(\bG))$ has $F_e$ be a source vertex.
\end{lemma}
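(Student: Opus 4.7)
The plan is to construct an explicit mutation sequence by induction on the number of vertical edges of $\bG$, using the fact that the arrows at $F_e$ in $Q(\bG)$ are highly constrained because $e$ is the rightmost edge.

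First, I would read off the arrows of $Q(\bG)$ incident to $F_e$ directly from Definition \ref{def:quiver_plabicfence}. Let $k$ be the level of $e$, $d$ the left vertical edge of $F_e$, $d^\uparrow_1<\cdots<d^\uparrow_m$ the vertical edges of $\bG$ at level $k+1$ strictly between $d$ and $e$, and $d^\downarrow_1<\cdots<d^\downarrow_{m'}$ those at level $k-1$. A direct check of the three cases in the scanning rule shows that the arrows incident to $F_e$ are exactly $F_d\to F_e$ (when $F_d$ exists), $F_{d^\uparrow_i}\to F_e$ for $2\le i\le m$, $F_{d^\downarrow_j}\to F_e$ for $2\le j\le m'$, together with the outgoing arrows $F_e\to F_{d^\uparrow_1}$ (when $m\ge 1$) and $F_e\to F_{d^\downarrow_1}$ (when $m'\ge 1$). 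No other arrows at $F_e$ arise, because no scanning step to the right of $e$ can contribute one. In particular, $F_e$ has at most two outgoing arrows.

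The plan for the mutation sequence has three parts. First, I would apply the lemma inductively to the subfence $\bG_{\le d}$, which has $d$ as its rightmost vertical edge, obtaining a mutation sequence in $Q(\bG_{\le d})_0\setminus\{F_d\}$ making $F_d$ a source in $Q(\bG_{\le d})$. Applied inside $Q(\bG)$, these mutations are at vertices disjoint from $F_e$ and from the $F_{d^\uparrow_i}, F_{d^\downarrow_j}$, and so they leave all arrows at $F_e$ undisturbed. Second, I would apply the ``wheel-reversal'' sequence $\mu_{F_{d^\uparrow_1}},\mu_{F_{d^\uparrow_2}},\ldots,\mu_{F_{d^\uparrow_m}},\mu_{F_{d^\uparrow_1}}$ at level $k+1$, and an analogous sequence at level $k-1$. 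The first mutation at $F_{d^\uparrow_1}$ flips $F_e\to F_{d^\uparrow_1}$ and, via the out-arrows $F_{d^\uparrow_1}\to F_{d^\uparrow_2}$ and $F_{d^\uparrow_1}\to F_d$, creates composites $F_e\to F_{d^\uparrow_2}$ and $F_e\to F_d$ that form $2$-cycles with $F_{d^\uparrow_2}\to F_e$ and $F_d\to F_e$ and reduce to zero. Subsequent mutations at $F_{d^\uparrow_2},\ldots,F_{d^\uparrow_m}$ occur at vertices that are sources in the relevant subquiver, so they simply flip incident arrows and create no composite targeting $F_e$. The final mutation at $F_{d^\uparrow_1}$ flips $F_{d^\uparrow_1}\to F_e$ back into $F_e\to F_{d^\uparrow_1}$. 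Third, I would conclude by mutating at $F_d$ to absorb the arrow $F_d\to F_e$ that gets recreated as a composite at the final step of a wheel-reversal. Since the first step has prepared $F_d$ as a source in $Q(\bG_{\le d})$, this mutation simply reverses $F_d\to F_e$ into $F_e\to F_d$, and the composites $F_{d^\uparrow_1}\to F_e$ and $F_{d^\downarrow_1}\to F_e$ that it creates form $2$-cycles with the restored $F_e\to F_{d^\uparrow_1}$ and $F_e\to F_{d^\downarrow_1}$ and reduce to zero.

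The main obstacle is the combinatorial bookkeeping of the composite arrows and $2$-cycle reductions across all these mutations, and in particular verifying that no new incoming arrow to $F_e$ is created as a byproduct. The crucial structural observation is that $F_e$ is an in-neighbor of each $F_{d^\uparrow_i}$ and $F_{d^\downarrow_j}$ but, once the first mutation of a wheel-reversal is performed, never an out-neighbor of any vertex subsequently mutated during that sequence; consequently no composite produced by a later mutation can target $F_e$. Combined with the inductive preparation of $F_d$, which ensures that the final mutation at $F_d$ does not route composites from the rest of $\bG_{<d}$ into $F_e$, this produces the desired sequence. The base case of the induction, in which $\bG_{<e}$ contains at most one vertical edge, is verified by direct inspection.
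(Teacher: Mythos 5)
Your determination of the local quiver at $F_e$ is where the argument breaks. You assert in-arrows $F_{d^\uparrow_i}\to F_e$ for $2\le i\le m$ (and $F_{d^\downarrow_j}\to F_e$, $j\ge 2$), reading case (c) of Definition \ref{def:quiver_plabicfence} as if the referenced face may lie to the right of the edge currently being scanned. In the quiver the paper actually works with, these arrows do not exist: a level-$(k\pm1)$ face whose two vertical edges both lie strictly between $d$ and $e$ (a ``nested'' face) carries no arrow to $F_e$, as one sees from the curve description in Proposition \ref{lem:QP_coincide} (the curves of nested faces share no ribbon twist, hence do not intersect) and from the identification of $Q(\bG(\beta))$ with the seed quivers of the cited literature in Theorem \ref{thm:cluster}. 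Under the correct reading, when scanning an edge one only adds arrows among faces of the partial fence scanned so far, so the unique possible in-arrow at $F_e$ is $F_d\to F_e$, and the out-arrows go to $F_{d^\uparrow_1}$, $F_{d^\downarrow_1}$. Because your wheel-reversal sequences are built to cancel arrows that are not there, they can actively destroy the source property. Concretely, take the fence of $\sigma_2\sigma_1\sigma_2\sigma_2\sigma_1$, with vertical edges $b_1$ (level $2$), $a_1$ (level $1$), $b_2,b_3$ (level $2$), $a_2=e$ (level $1$): here $Q(\bG)$ is $F_{a_2}\to F_{b_2}\to F_{b_3}$, so $F_e=F_{a_2}$ is already a source, while your sequence (steps 1 and 3 are vacuous since $F_d$ does not exist, step 2 is $\mu_{F_{b_2}},\mu_{F_{b_3}},\mu_{F_{b_2}}$) ends with the arrows $F_{b_3}\to F_{a_2}$ and $F_{b_3}\to F_{b_2}$, so $F_e$ is no longer a source and the statement you need is false for your sequence.

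Even granting your reading of the definition, the sketch has an internal gap at the last mutation of each wheel-reversal: at that moment the arrow $F_{d^\uparrow_1}\to F_e$ is present (it is precisely the arrow you intend to flip back), so $F_e$ \emph{is} an out-neighbor of the vertex being mutated, contradicting your stated structural observation; that mutation therefore creates composites into $F_e$ from every in-neighbor of $F_{d^\uparrow_1}$ (including faces at level $k+2$ and faces to the left of $d$), and nothing in the proposal shows these cancel. Likewise, the arrow $F_{d^\uparrow_1}\to F_d$ you use in the first step need not exist (the case-(c) target of $d^\uparrow_1$ is the first level-$k$ edge to the right of the left edge of $F_{d^\uparrow_1}$, which may be strictly left of $d$), and the claim that the intermediate vertices $F_{d^\uparrow_2},\ldots,F_{d^\uparrow_m}$ are sources of the relevant subquiver ignores the composites produced by the first mutation and the arrows to and from level $k+2$. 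The paper's proof is structured quite differently and avoids this bookkeeping altogether: since the only in-arrow at $F_e$ is $F_d\to F_e$, it mutates left to right along the faces at the same level as $e$, and controls all side effects not by tracking composites but by realizing the sequence through plabic fence moves (a reflection move, which preserves the quiver, followed by slides past edges at adjacent levels, which also preserve it, and square moves, which are exactly the mutations), reading off at the end that $F_e$ has a unique, outgoing arrow.
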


\begin{proof}
Let us consider the set of faces $F_1,\ldots,F_k,F_{k+1}$ at the same level as the edge $e$. We order them left to right as we scan $\bG$, thus $F_1$ is the leftmost face of $\bG$ at that level, $F_2$ is the face adjacent to $F_1$ exactly to its right, and so on until $F_{k+1}=F_e$. The claim is:

{\underline {\it Assertion}}. $(v_1,\ldots,v_k)=(F_1,\ldots,F_k)$ is a sequence that turns $F_{k+1}=F_e$ into a source.\\

Before we prove the assertion, three comments. First, we recall that \cite{ShenWeng}, or \cite[Section 5]{GSW}, explain how to associate a quiver to a more general type of plabic fence than that in Definition \ref{def:plabic_fence}.\footnote{These are instances of the general theory of quivers for plabic graphs, see \cite{Postnikov}.} Namely, to a plabic fence that also allows vertical edges to have a black vertex at the top and a white vertex at the bottom. See also \cite[Section 2]{CasalsWeng22} for such objects, their quivers and their properties. These are essentially all rephrasings of the theory of double wiring diagrams, cf.~\cite[Section 2.4]{FWZ}, now in the plabic graph terminology and generalized to the non-reduced case. Just for this proof, we refer to these more general plabic fences also as plabic fences.

Now, the reflection move in \cite[Section 2.3]{ShenWeng}, see also \cite[Section 5.2]{CasalsWeng22} or \cite[Section 4]{GSW}, is as follows. A reflection move $r_k$ at level $k$ of a plabic fence $\bG$ is the operation that inputs $\bG$ and outputs a plabic fence $r_k(\bG)$ which coincides with $\bG$ everywhere except that the leftmost vertical edge at level $k$ has been flipped.\footnote{Here flipping means: if it had a black vertex at the top, now it has a black vertex at the bottom, and vice versa.} A reflection move does {\it not} change the quiver $Q(\bG)$, see \cite{ShenWeng}.

Second, for each face $F$, let us denote by $\dd_-F$, resp.~$\dd_+F$ its left vertical edge, resp.~right vertical edge. Suppose that we have a square face in $\bG$ which has a $\dd_-F$ with a black vertex at the top and $\dd_+F$ with a black vertex at the bottom. Then the square move, cf.~\cite[Section 2.5]{FWZ}, is the operation that exchanges $\dd_-F$ and $\dd_+F$, i.e.~the new plabic fence $\mu_F(\bG)$ coincides with $\bG$ everywhere except that $\dd_-F$ now has a black vertex at the bottom and $\dd_+F$ now has a black vertex at the top. A square move induces a mutation of $Q(\bG)$ at the vertex $F$, i.e.~$Q(\mu_F(\bG))=\mu_F(Q(\bG))$.

\begin{center}
	\begin{figure}[H]
		\centering
		\includegraphics[scale=0.75]{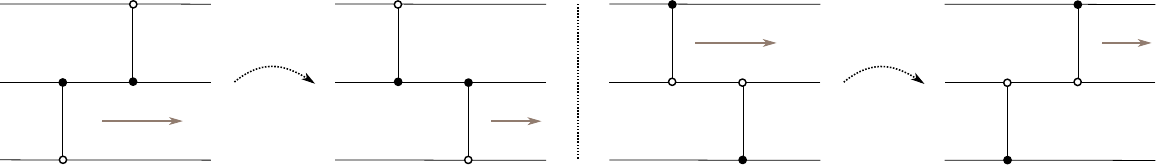}
		\caption{Two moves on plabic fences that do not change the quiver. They allows us to move a certain type of vertical edge to the right in the presence of certain vertical edges above and below it.}\label{fig:PlabicFence_Example2}
	\end{figure}
\end{center}

Third, the two moves in Figure \ref{fig:PlabicFence_Example2} allow us to slide right a vertical edge with black at the top through vertical edges with black at the bottom to its upper right or lower right. Let us now prove the assertion above, which will conclude the proof for the lemma.

{\it Proof of Assertion}: Let us consider the initial plabic fence $\bG$ and apply a reflection $r_k(\bG)$, where $k$ is the level of $e\in\bG$. We still have $Q(r_k(\bG))=Q(\bG)$. This reflection creates a vertical edge $\delta$ with black on top at the leftmost part of level $k$ of $r_k(\bG)$; all the remaining vertical edges in $r_k(\bG)$ have black at the bottom. Then we slide $\delta$ to the right through level $k$ by either applying the sliding moves in Figure \ref{fig:PlabicFence_Example2}, or using square moves. The former has no effect on the quiver, the latter induce quiver mutations.

Let us slide $\delta$ to the right until $\delta$ is exactly to the left of $e$, so that (the new) $F_e$ has $\dd_-F_e=\delta$ and $\dd_+F_e=e$. Denote by $\bG_{\delta,e}$ this plabic fence. Then $\delta$ will have slid through faces $F_1,\ldots,F_k$ when going from $\bG$ to $\bG_{\delta,e}$. Thus at this stage we have performed a sequence of mutations at those faces, in this same order, and $Q(\bG_{\delta,e})=\mu_{F_k}\cdots\mu_{F_1}(Q(\bG))$. To conclude, it suffices to note that the quiver $Q(\bG_{\delta,e})$ associated to the plabic fence $Q(\bG_{\delta,e})$ has a unique arrow out of the vertex associated to $F_e$, and therefore $F_e$ is a source, as required.
\end{proof}

Lemma \ref{lem:source} states that we can find a sequence of mutations for $Q(\bG)$ so that the vertex corresponding to the face immediately left of rightmost vertical edge of $\bG$ becomes a source vertex, and such that this sequence of mutations never includes a mutation at that particular vertex that we want to turn into a source vertex.

\begin{remark}
This is not needed for our applications, but note that the proof of Lemma \ref{lem:source} explicitly presents a mutation sequence that turns $F_e$ into a source.\hfill$\Box$
\end{remark}


\subsection{Non-degeneracy of $(Q(\SC(\bG)),W(\SC(\bG)))$}\label{ssec:nondeg_def} The notion of non-degeneracy of a QP was introduced in \cite[Section 7]{DWZ}. It reads as follows:

\begin{definition}\label{def:non_degenerate}
Let $(Q,W)$ be a QP and $k_1,\ldots, k_l\in Q_0$ be a finite sequence of vertices, no two consecutive ones being equal. By definition, $(Q,W)$ is $(k_1,\ldots, k_l)$-non-degenerate if all
the QPs $(Q,W)$, $\mu_{k_1}(Q,W),\mu_{k_2}\mu_{k_1}(Q,W),\ldots,\mu_{k_l}\cdots\mu_{k_2}\mu_{k_1}(Q,W)$ are 2-acyclic. By definition $(Q,P)$ is non-degenerate if it is $(k_1,\ldots, k_l)$-non-degenerate
for every such sequence of vertices.\hfill$\Box$
\end{definition}

Thus, in a non-degenerate $(Q,W)$ we can choose an arbitrary sequence of vertices in $Q$ and mutate the QP $(Q,W)$ along that sequence of vertices. That is, it is possible to perform QP-mutations to $(Q,W)$ just by naming the quiver mutations of $Q$. There are some QPs that are non-degenerate and some that are degenerate, see \cite{DWZ}, especially Sections 7 and 8 therein, and, for instance, \cite[Section 2.2]{chang_zhang_2022}, where quivers associated to the top positroid variety of a Grassmannians are shown to be non-degenerate.

Before establishing non-degeneracy of $(Q(\SC),W(\SC))$, we recall that \cite[Definition 6.10]{DWZ} defines a QP $(Q,W)$ to be rigid if the trace space of its Jacobian algebra is equal to the ground ring $\C$. Rather than the definition itself, the three following properties about rigid QPs are most relevant to us:

\begin{itemize}
    \item[-] Rigidity is preserved under QP mutation. That is, if $(Q,W)$ is rigid, then $\mu_v(Q,W)$ is rigid for any vertex $v\in Q_0$. This is established in \cite[Corollary 6.11]{DWZ}.\\

    \item[-] If $(Q,W)$ is rigid and $Q'$ is obtained by adding one vertex $v$ to $Q$, so that $Q'_0=Q_0\cup\{v\}$, such that $v$ is a source (or a sink) in $Q'$, then $(Q',W')$ with $W'=W$ is also rigid.\footnote{In the quiver literature, this operation is part of what is known as a triangular extension of $Q(\bG_{<e})$ and a point, cf.~\cite[Definition 3.8]{Amiot09}.} This is proven in \cite[Remark 4.4]{Ladkani13} or cf.~\cite[Section 8]{DWZ}.\\
    
    \item[-] A rigid QP $(Q,W)$ is non-degenerate. This is proven in \cite[Corollary 8.2]{DWZ}.
\end{itemize}

Let us now show that QP $(Q(\SC),W(\SC))$ associated to curve configurations $\SC$ such that $\SC=\SC(\bG)$ for a plabic fence are indeed non-degenerate.

\begin{prop}\label{thm:QPnondeg}
Let $\bG$ be a plabic fence and $\SC=\SC(\bG)$ its associated curve configuration. Then the QP $(Q(\SC),W(\SC))$ is rigid, and thus non-degenerate.
\end{prop}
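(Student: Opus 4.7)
The plan is to show that $(Q(\bG),W(\bG))$ is rigid, since this implies the proposition: Proposition \ref{lem:QP_coincide} identifies $(Q(\SC),W(\SC))$ with $(Q(\bG),W(\bG))$, and rigidity implies non-degeneracy by the third bulleted property above. The argument will go by induction on the number of vertical edges of $\bG$, scanning left to right as in Definition \ref{def:quiver_plabicfence}. The base case is the empty plabic fence, for which $Q(\bG)$ has no vertices and the QP is vacuously rigid.

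For the inductive step, let $e$ be the rightmost vertical edge of $\bG$. If the face $F_e$ does not exist, then Definition \ref{def:quiver_plabicfence} gives $(Q(\bG),W(\bG))=(Q(\bG_{<e}),W(\bG_{<e}))$ and the conclusion is immediate from the inductive hypothesis. Assume from now on that $F_e$ exists. The first key step is to apply Lemma \ref{lem:source} to produce a sequence of vertices $(v_1,\ldots,v_k)$ in $Q(\bG_{<e})_0$, none equal to $F_e$, such that $F_e$ becomes a source in $(\tilde Q,\tilde W):=\mu_{v_k}\cdots\mu_{v_1}(Q(\bG),W(\bG))$. Because $F_e$ is a source in $\tilde Q$, no oriented cycle passes through it, and so no monomial of $\tilde W$ can involve any arrow incident to $F_e$. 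Denoting by $(\tilde Q',\tilde W')$ the QP obtained from $(\tilde Q,\tilde W)$ by deleting the source vertex $F_e$ together with its outgoing arrows, the second bulleted property above reduces the rigidity of $(\tilde Q,\tilde W)$ to that of $(\tilde Q',\tilde W')$.

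The crux of the argument will be the compatibility claim $(\tilde Q',\tilde W')=\mu_{v_k}\cdots\mu_{v_1}(Q(\bG_{<e}),W(\bG_{<e}))$, up to right-equivalence. To establish this I would argue that, since each $v_i$ lies in $Q(\bG_{<e})_0$, the DWZ QP-mutations split into two disjoint strata: arrows and potential terms whose support avoids $F_e$, which evolve identically whether computed inside $(Q(\bG),W(\bG))$ or inside $(Q(\bG_{<e}),W(\bG_{<e}))$; and arrows and terms incident to $F_e$, which are precisely those discarded upon restriction. Any 2-cycle reduction occurring during $\mu_{v_1},\ldots,\mu_{v_k}$ either lives entirely in $Q(\bG_{<e})$, where it is matched by the corresponding reduction for the smaller QP, or involves an $F_e$-incident arrow, in which case it is absorbed when we delete the source $F_e$ and its outgoing arrows at the end. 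Granting this identification, the inductive hypothesis combined with the first bulleted property (mutation invariance of rigidity) gives rigidity of $(\tilde Q',\tilde W')$, hence of $(\tilde Q,\tilde W)$, and hence of $(Q(\bG),W(\bG))$ after applying the reverse mutations $\mu_{v_1},\ldots,\mu_{v_k}$.

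I expect the main obstacle to be the compatibility claim of the previous paragraph: one must carefully track how 2-cycles at $F_e$ that could appear in intermediate steps get reduced, and how those reductions interact with potential terms that straddle $F_e$ and $Q(\bG_{<e})_0$. The bounded valence of $F_e$ in $Q(\bG)$, namely at most one incoming arrow from $F_d$ and at most two outgoing arrows to $F_{d^\uparrow}$ and $F_{d^\downarrow}$, together with the explicit form of the mutation sequence from Lemma \ref{lem:source}, realized by sliding a reflected leftmost vertical edge across $\bG_{<e}$ via square moves, should keep the $F_e$-incident arrows combinatorially well-separated at every intermediate stage, making the clean separation of strata possible.
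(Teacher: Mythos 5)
Your proposal is correct and follows essentially the same route as the paper's proof: induction by scanning the fence left to right, Lemma \ref{lem:source} to turn $F_e$ into a source by mutating only at vertices of $Q(\bG_{<e})$, mutation-invariance of rigidity, and the fact that adding a source vertex to a rigid QP preserves rigidity. The compatibility claim you flag as the crux is precisely the statement that restriction to the full subquiver on $Q(\bG_{<e})_0$ commutes with mutation at its vertices, which the paper does not reprove but cites directly from \cite[Lemma 2.5]{FominIgusaLee21}, so your sketched stratification argument can be replaced by that reference.
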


\begin{proof}
By Lemma \ref{lem:QP_coincide}, it suffices to argue that $(Q(\bG),W(\bG))$ as in Definition \ref{def:quiver_plabicfence} is rigid. As usual, we prove this by scanning the plabic fence $\bG$ left to right. First, the empty quiver $(Q(\bG),W(\bG))$ is rigid and non-degenerate. Second, we assume that we have scanned until the vertical edge $e$ and we have rigidity for $W(\bG_{<e})$. Let us consider two cases:

\begin{itemize}
    \item[-] If $[de]$ does not exist, then $F_e$ is a source vertex and $Q(\bG_{\leq e})$ is obtained from $Q(\bG_{< e})$ by adding a source, thus no new cycles appear and the potential is still rigid by the second property above. Similarly, if neither $[ed^\uparrow]$ and $[ed^\downarrow]$ exist, then $F_e$ is a sink vertex and the same argument applies.\\

    \item[-] If either $[ed^\uparrow]$ or $[ed^\downarrow]$ exist and so does $[de]$, then $F_e$ is neither a sink nor a source.  Nevertheless, we can apply Lemma \ref{lem:source} to turn $F_e$ into a source by mutating at vertices in $Q(\bG_{<e})$. Indeed, the sequence of mutations $\mu_v$ from Lemma \ref{lem:source} is at an ordered collection of vertices $v:=(v_1,\ldots,v_k)$ which are already vertices in $Q(\bG_{<e})$, i.e.~ it does not mutate at $F_e$. By hypothesis, $W(\bG_{<e})$ is rigid and, as stated above, rigidity is preserved by QP mutation. Therefore $\mu_v(W(\bG_{<e}))$ is also rigid. Since restriction commutes with mutation by \cite[Lemma 2.5]{FominIgusaLee21} and the mutations occur at vertices of $Q(\bG_{<e})$, the mutated potential $\mu_v(W(\bG_{\leq e}))$ restricted to the subquiver $Q(\bG_{<e})$ is also rigid, as it equals $\mu_v(W(\bG_{<e}))$. Since the quiver $\mu_v(Q(\bG_{\leq e}))$ is given by adding a source to $\mu_v(Q(\bG_{<e}))$, the resulting potential $\mu_v(W(\bG_{\leq e}))$, now without restricting, is still rigid. The original potential $W(\bG_{\leq e})$ is QP mutation-equivalent to this resulting potential, and thus also rigid.
\end{itemize}
In summary, in either case, the QP $(Q(\bG),W(\bG))$ stays rigid as we scan $\bG$ left to right. Therefore $(Q(\SC(\bG)),W(\SC(\bG)))$ is non-degenerate.
\end{proof}

Note that there are many QP $(Q,W)$ that might not be rigid (or non-degenerate). The class of quivers with potentials $(Q(\bG),W(\bG))$ that we introduced and analyzed above is particular enough that rigidity can be argued directly as in Proposition \ref{thm:QPnondeg}. This is similar to the fact that the quivers in the Kontsevich-Soibelman class $\mathcal{P}$\footnote{This is the class generated by the one vertex quiver by triangular extensions and mutations.} are rigid and admit a unique non-degenerate potential up to right-equivalence. In fact, though it will not be needed for our application, it can also be proven that $W(\bG)$ is the unique non-degenerate potential for $Q(\bG)$ up to right-equivalence.

\begin{remark} It is likely that the more general class of curve configurations $\SC(\mathfrak{w})$ that we associated to the weaves $\mathfrak{w}$ in \cite{CGGLSS} also have $W(\SC(\mathfrak{w}))$ be non-degenerate; see \cite[Section 7.3]{CGGLSS}. In that case, the arguments in Section \ref{sec:Symplectic} would also prove Theorem \ref{thm:main} for $(-1)$-closures.\hfill$\Box$
\end{remark}

This concludes our construction and study of the QPs $(Q(\SC),W(\SC))$. We have established in Section \ref{sec:curves} the invariance of their right-equivalence classes under triple point moves and local bigon moves, proven that they undergo a QP-mutation when a $\g$-exchange is applied to $\SC$, and now shown rigidity for those configurations associated to plabic fences. There are two technical pieces that still need justification: arguing that Assumption \ref{assumption:bigons} holds and proving Lemma \ref{lem:reduced_parts}. We conclude this section by presenting such proofs.

\color{black}
\subsection{The assumption on bigons is satisfied} Recall that given a local bigon $B\sse\Sigma$ bounded by a curve configuration $\SC$, $\rho(B)$ denotes the region to its right and $\la(B)$ the region to its left. See Figure \ref{fig:LocalBigon} in Subsection \ref{sssec:local_property_bigons}. In this subsection we show that Assumption \ref{assumption:bigons} is satisfed. We recall the statement of the assumption:

{\bf Assumption 1}. Every local bigon $B\sse\Sigma$ is assumed to satisfy that there is no polygon bounded by $\SC$ which contains both $\la(B)$ and $\rho(B)$.\hfill$\Box$

\label{ssec:assumption_bigons} By definition, a cycle $v_1\ldots v_n$ in a QP $(Q,W)$ is said to be empty if $v_1\ldots v_n$, or any of its cyclically equivalent forms, does not appear as a monomial in $W$. The following lemma shows that we have no empty cycles for QP associated to curve configurations:

\begin{lemma}\label{lem:empty_cycles_degeneracy}
Let $(Q,W)$ be a QP and assume that it contains an empty cycle, i.e.~a cycle of arrows in $Q$ whose corresponding monomial does not appear in the potential $W$. Then $(Q,W)$ is degenerate.
\end{lemma}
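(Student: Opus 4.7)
The plan is to prove the contrapositive by induction on the length $n \geq 2$ of the empty cycle $c$. First I would reduce to the case where $(Q,W)$ is itself reduced: by Theorem \ref{thm:DWZ_splitting} we can decompose $(Q,W) \cong (Q_{triv},W_{triv}) \oplus (Q_{red},W_{red})$, and since the trivial summand consists only of 2-cycles fully paired in the quadratic part of $W_{triv}$, it cannot contain an empty cycle. Hence the empty cycle $c$ lives in $(Q_{red},W_{red})$, and degeneracy of the latter implies degeneracy of $(Q,W)$.

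For the base case $n = 2$, the empty cycle is a 2-cycle in the reduced quiver $Q_{red}$, which immediately violates 2-acyclicity and thus exhibits degeneracy via the empty mutation sequence. For the inductive step, write the cycle as $v_1 \xrightarrow{\alpha_1} v_2 \xrightarrow{\alpha_2} v_3 \xrightarrow{\alpha_3} \cdots \xrightarrow{\alpha_n} v_1$ and apply the DWZ non-reduced mutation at $v_2$. This introduces a composite arrow $[\alpha_2\alpha_1]\colon v_1 \to v_3$ and yields a candidate shorter cycle $c' = [\alpha_2\alpha_1]\alpha_3\cdots\alpha_n$ of length $n-1$ in $\mu_{v_2}(Q)$. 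The key claim is that $c'$ is empty in $\mu_{v_2}(W) = [W] + \Delta_{v_2}$. The $\Delta_{v_2}$ contribution consists only of 3-cycles $[ba]b^*a^*$ involving the freshly created dual arrows $a^*, b^*$, which cannot be cyclically equivalent to $c'$ since $\alpha_3,\ldots,\alpha_n$ are pre-existing arrows of $Q$. For a monomial in $[W]$ to be cyclically equivalent to $c'$, reversing the substitution $\alpha_2\alpha_1 \mapsto [\alpha_2\alpha_1]$ would produce a monomial in $W$ cyclically equivalent to the original cycle $c$, contradicting emptiness.

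After passing to the reduced part $\mu_{v_2}(Q,W)_{red}$, if every arrow of $c'$ survives then $c'$ remains an empty cycle of length $n-1$ and the induction hypothesis applies. Otherwise some arrow $a \in \{[\alpha_2\alpha_1],\alpha_3,\ldots,\alpha_n\}$ is paired with another arrow $\beta$ and eliminated by a quadratic term $a\beta$ in $\mu_{v_2}(W)$. By the same $[W]$/$\Delta_{v_2}$ dichotomy as above, such a quadratic monomial must originate from a cubic cycle in $W$ of the form $\alpha_2\alpha_1\beta$ (when $a = [\alpha_2\alpha_1]$) or already sit in $W$ (when $a = \alpha_i$ for $i \geq 3$). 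The Jacobian-type change of variables used in the DWZ splitting algorithm then rewrites the surviving portion of $c'$ as an empty cycle of length strictly less than $n$ inside $\mu_{v_2}(Q,W)_{red}$; this rewriting preserves emptiness because the substitution replaces $a$ with higher-order terms that, again by the original emptiness of $c$, cannot complete a cycle cyclically equivalent to the shortened one inside the new potential. Applying the induction hypothesis and invoking the fact that QP-mutation at a fixed vertex is an involution (so degeneracy transfers back to $(Q,W)$) completes the argument.

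The main obstacle is exactly this reduction step: controlling the possibility that pairing with the potential creates cancellations that make the shortened cycle non-empty. The heart of the proof is the bookkeeping that every potential source of such a quadratic monomial pulls back to a monomial cyclically equivalent to $c$ (or to a sub-cycle of $c$), which is forbidden by hypothesis; all other sources come from $\Delta_{v_2}$ and are structurally distinguishable by the presence of the new dual arrows.
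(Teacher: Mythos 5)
Your core induction is exactly the paper's: mutate at a vertex of the empty cycle, note that $\Delta_{v_2}$ only contributes cycles through the new starred arrows, and that a monomial of $[W]$ cyclically equal to $[\alpha_2\alpha_1]\alpha_3\cdots\alpha_n$ would pull back to the forbidden monomial of $W$, so the shortened cycle is empty in the non-reduced mutation; the base case is an empty $2$-cycle. However, the two places where you go beyond the paper are genuine gaps. First, the preliminary reduction to the reduced case does not work as stated: Theorem \ref{thm:DWZ_splitting} splits the \emph{arrow span}, not the quiver, so a cycle of $Q$ may use arrows from both the trivial and the reduced summands, and the inference ``no empty cycle lies wholly in the trivial summand, hence $c$ lives in $(Q_{red},W_{red})$'' is a non sequitur. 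Moreover the splitting is only achieved after a right-equivalence, which can absorb an arrow of $c$ into the trivial part (destroying the cycle) or introduce new monomials (destroying emptiness). Concretely, for arrows $a,c\colon 1\to 2$, $b\colon 2\to 1$ and $W=ab$, the cycle $cb$ is empty, yet the reduced part is the single arrow $c$ with zero potential and retains no trace of it; so the survival of an empty cycle under reduction is precisely what cannot be taken for granted.

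Second, in the inductive step your handling of the case where an arrow of $c'$ is cancelled during the passage to $\mu_{v_2}(Q,W)_{red}$ is asserted rather than proved: the reduction deletes the paired arrows and substitutes the cancelled arrow, inside the remaining monomials, by sums of paths of length at least two, so the ``surviving portion of $c'$'' is in general not a cycle of length strictly less than $n$, and its emptiness in the reduced potential does not follow from the emptiness of $c$. The paper avoids both issues by never reducing: since the lemma is stated for arbitrary QPs (reduced or not), the induction hypothesis is applied directly to the non-reduced mutation $(\mu_{v_2}(Q),\mu_{v_2}(W))=([W]+\Delta_{v_2})$, whose quiver visibly contains the empty $(n-1)$-cycle; degeneracy of that QP then transfers to $(Q,W)$ (as you note, via the involutivity/compatibility of mutation in Definition \ref{def:QP_mutation}), and the base case is read off from Definition \ref{def:non_degenerate} on $(Q,W)$ itself without invoking the splitting theorem. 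If you delete your two reduction steps and run the induction on the non-reduced mutations, your argument coincides with the paper's proof; as written, the steps you added are the ones that would fail.
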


\begin{proof}
Let us show this by induction on $n\in\N$, where $n$ is the number of arrows of an empty $n$-cycle $v_1\ldots v_n$. The base case is $n=2$: suppose there is a 2-cycle $v_1v_2$ in $Q$ such that $v_1v_2$ is not a monomial in $W$. By Definition \ref{def:non_degenerate}, without mutating at all, the quiver in the reduced part of $(Q,W)$ still has the 2-cycle $v_1v_2$ and thus $(Q,W)$ is degenerate.\\

The induction step is as follows. Consider an empty $n$-cycle $v_1\ldots v_n$ for $(Q,W)$ and assume that the existence of an empty $(n-1)$-cycle in a QP implies its degeneracy. Mutation of $(Q,W)$ at the vertex $v=h(v_1)=t(v_2)$, creates the $(n-1)$-cycle $[v_1v_2]v_3\ldots v_n$. By Definition \ref{def:QP_mutation}, the potential $\mu_v(W)$ for the mutated QP $\mu_v(Q,W)$ is obtained by substituting $v_1v_2$ by $[v_1v_2]$ in $W$ and adding the cubic term $[v_1v_2]v_1^\ast v_2^\ast$. Therefore, $\mu_v(W)$ contains the monomial $[v_1v_2]v_3\ldots v_n$ if and only if $W$ contains the monomial $v_1\ldots v_n$. By assumption, $W$ does not contain $v_1\ldots v_n$ and thus $[v_1v_2]v_3\ldots v_n$ is an empty $(n-1)$-cycle in $\mu_v(Q,W)$. By induction, $\mu_v(Q,W)$ is degenerate and thus so is $(Q,W)$.
\end{proof}

Let us now use Lemma \ref{lem:empty_cycles_degeneracy} to prove that Assumption \ref{assumption:bigons} is satisfied for our non-degenerate configurations. The precise statement reads as follows:

\begin{prop}\label{prop:local_bigons}
Let $\SC$ be a curve configuration and $B\sse\Sigma$ a local bigon bounded by $\SC$. Suppose that $(Q(\SC),W(\SC))$ is a non-degenerate potential. Then there is no polygon bounded by $\SC$ which contains both $\la(B)$ and $\rho(B)$.
\end{prop}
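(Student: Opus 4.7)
The plan is to argue by contradiction using Lemma~\ref{lem:empty_cycles_degeneracy}. Assume there is a local bigon $B$ with $\la(B)=\rho(B)=R$; write $p_{12}, p_{21}$ for its two vertices and $\alpha_1\sse\g_1, \alpha_2\sse\g_2$ for its two bounding arcs. The $2$-cycle $p_{12}p_{21}$ exists in $Q(\SC)$, and $B$ itself contributes the monomial $\epsilon\cdot p_{12}p_{21}$ to $W(\SC)$ with $\epsilon\in\{\pm 1\}$ determined by the shading and orientation of $B$. My aim is to show that the coefficient of $p_{12}p_{21}$ in $W(\SC)$ actually vanishes, so that this $2$-cycle is empty and Lemma~\ref{lem:empty_cycles_degeneracy} then forces degeneracy, contradicting the hypothesis.

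To this end I plan to exhibit a second polygon $B^*$ bounded by $\SC$, with the same vertex set $\{p_{12}, p_{21}\}$ as $B$, whose contribution to $W(\SC)$ is $-\epsilon\cdot p_{12}p_{21}$. The candidate $B^*$ is the \emph{outer companion} bigon: its boundary uses the same two arcs $\alpha_1, \alpha_2$, but with the quadrants at $p_{12}$ and at $p_{21}$ chosen diagonally opposite to those used by $B$. The hypothesis $\la(B)=\rho(B)$ is exactly what enables this construction: the two ``outer'' side-regions of $B$ lie in a single region $R$ of $\Sigma\setminus\SC$, which is the topological connectivity needed to close $B^*$ up into a contractible subset of $\Sigma$ using only arcs of $\SC$.

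For the sign calculation, recall that at each intersection point the shaded pair of quadrants consists of two consecutive quadrants (Figure~\ref{fig:OrientationSigns}), so the quadrant diagonally opposite a shaded one is unshaded and conversely. The vertex signs at $p_{12}, p_{21}$ therefore each flip from $B$ to $B^*$, giving $\sigma(B^*)=(-1)^2\sigma(B)=\sigma(B)$. Since $B$ and $B^*$ lie on opposite sides of the common boundary $\partial B=\partial B^*$, they belong to opposite classes $\Gamma_2^\pm$, and the sign convention in Definition~\ref{def:quiverC} then flips the overall sign of $B^*$'s contribution, producing $-\epsilon\cdot p_{12}p_{21}$ and giving the desired cancellation with $B$'s contribution.

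The hard step will be the rigorous construction of $B^*$ as a polygon in the sense of Definition~\ref{def:quiverC}, namely as a closed contractible subset of $\Sigma$ with embedded interior and PL-smooth boundary on $\SC$. Naively setting $B^*=\Sigma\setminus\op{int}(B)$ fails unless $\Sigma$ is a sphere; instead $B^*$ must be carved as a contractible disk with boundary exactly $\alpha_1\cup\alpha_2$, exploiting the locality of $B$ (no other arcs of $\SC$ meet a small neighborhood of $\overline{B}$) together with the connectivity of $R$. A convenient tactic is to first apply triple-point moves near $\partial B$ to reduce to a local model; by Proposition~\ref{lem:triplepoint} these preserve the right-equivalence class of $(Q(\SC), W(\SC))$ and hence its non-degeneracy, and in the local model the outer companion $B^*$ should emerge as a manifest contractible disk from which the cancellation can be read off directly.
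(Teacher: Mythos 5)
Your strategy has a genuine gap: the ``outer companion'' bigon $B^*$ does not exist, and the cancellation you want is false in general. First, the construction is internally inconsistent. At a vertex $p$ of $B$, the germs of the two sides $\alpha_1,\alpha_2$ bound exactly one quadrant, so \emph{any} polygon whose sides are the same arcs $\alpha_1,\alpha_2$ is forced by condition (iii) of the polygon definition to use the \emph{same} quadrant as $B$ at both vertices; ``same arcs, diagonally opposite quadrants'' is not a possible polygon. If instead $B^*$ were a genuinely different contractible region with boundary exactly $\alpha_1\cup\alpha_2$, it would lie on the other side of that circle, and $B\cup B^*$ would be a closed sphere inside $\Sigma$ --- impossible, since a curve configuration forces $b_1(\Sigma)=b\geq 1$ (and the relevant $\Sigma$ are surfaces with boundary). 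A polygon contributing the monomial $p_{12}p_{21}$ with diagonally opposite quadrants would have to have the complementary arcs of $\g_1,\g_2$ as its sides, which requires those arcs to contain no other intersection points; the hypothesis $\la(B)=\rho(B)$ gives nothing of the sort. In the paper's own local picture for $\la(B)=\rho(B)$ (Figure \ref{fig:Bigon_LeftRight}), the complementary arcs run through many other intersection points, $B$ is the unique $2$-gon on the vertices $p_{12},p_{21}$, and the coefficient of $p_{12}p_{21}$ in $W(\SC)$ is $\pm 1\neq 0$. So the $2$-cycle you are trying to empty is not empty, and Lemma \ref{lem:empty_cycles_degeneracy} cannot be invoked on it. Triple-point moves do not repair this: they preserve the right-equivalence class and do not create the missing companion polygon.

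The paper's proof runs differently and, tellingly, uses a hypothesis your argument never touches: that the curves of $\SC$ form a basis of $H_1(\Sigma;\Z)$. Assuming $\la(B)=\rho(B)$, one looks at the cycles $G$ and $T$ in $Q(\SC)$ formed by the other intersection points along the boundary of the merged region. If either cycle is empty, Lemma \ref{lem:empty_cycles_degeneracy} gives degeneracy, contradicting the hypothesis. If both bound polygons, an oriented-resolution argument shows that suitable sums of the classes $[\g_i]$ and $[\tau_j]$ are homologous to the two core curves of the annular region with opposite signs, producing a nontrivial vanishing linear combination in $H_1(\Sigma;\Z)$ and contradicting that $\SC$ is a configuration. (A footnote in Section \ref{sec:Symplectic} confirms that linear independence is used precisely in this proposition.) To fix your proof you would need either to locate an empty cycle that actually exists --- which is $G$ or $T$, not the bigon's $2$-cycle --- or to bring in the homological spanning condition; the cancellation mechanism as proposed cannot be carried out.
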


\begin{proof}
By contradiction, suppose that there exists a local bigon $B$ such that $\rho(B)$ and $\la(B)$ are both contained in the same polygon. We will now argue that either:

\begin{enumerate}
    \item The potential $(Q,W)$ is degenerate, or
    \item The homology classes in $H_1(\Sigma;\Z)$ of the curves in $\SC$ do not span $H_1(\Sigma;\Z)$, because there is a sub-collection of them that give linearly dependent homology classes.
\end{enumerate}

\begin{center}
	\begin{figure}[H]
		\centering
		\includegraphics[scale=0.75]{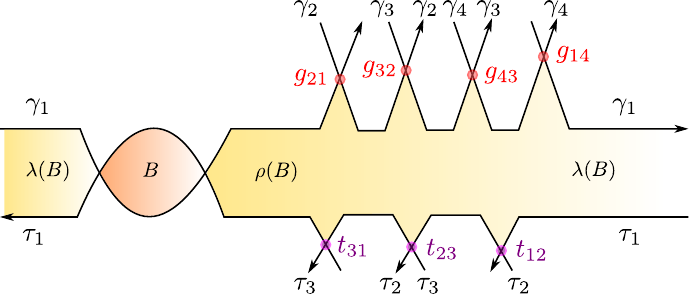}
		\caption{A configuration $\SC$ near bigon $B$, bounded by $\g_1,\tau_1$, with $\la(B)=\rho(B)$.}
		\label{fig:Bigon_LeftRight}
	\end{figure}
\end{center}

Let us consider a neighborhood of such local bigon $B$. It is as depicted in Figure \ref{fig:Bigon_LeftRight}, where the polygon $P_{\la,\rho}$ containing $\la(B)$ and $\rho(B)$ is drawn in yellow and $B$ is drawn in orange. To be precise, this yellow polygon might have curves from $\SC$ going through it, as illustrated in Figure \ref{fig:Bigon_LeftRight15}. These curves through the polygon (depicted in blue and dashed in Figure \ref{fig:Bigon_LeftRight15}) have no effect and we will just not drawn them in the figures for the rest of the argument. Now, following Figure \ref{fig:Bigon_LeftRight}, the yellow polygon $P_{\la,\rho}$ is bounded by:

\begin{enumerate}
    \item A collection of curves $\g_2,\ldots,\g_s\in\SC$, $s\in\N$, with intersection pattern exactly given by $\g_i$ intersects $\g_{i+1}$ negatively, at a point $g_{i+1,i}$ and $\g_g$ intersects $\g_1$ negatively at $g_{1,s}$.\\

    \item A collection of curves $\tau_2,\ldots,\tau_l\in\SC$, $l\in\N$, with intersection pattern exactly given by $\tau_i$ intersects $\tau_{i+1}$ positively, at a point $t_{i,i+1}$ and $\tau_l$ intersects $\g_1$ negatively at $t_{l,1}$.
\end{enumerate}

\begin{center}
	\begin{figure}[H]
		\centering
		\includegraphics[scale=0.75]{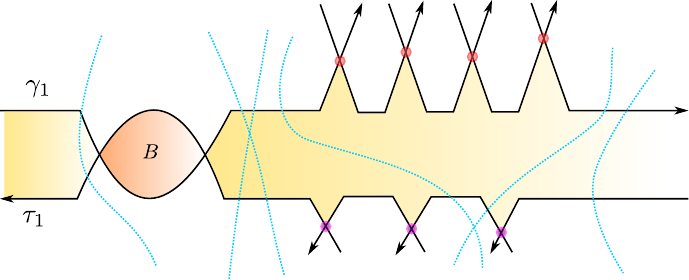}
		\caption{The configuration $\SC$ near the local bigon $B$ in Figure \ref{fig:Bigon_LeftRight} with curves from $\SC$ passing through. These curves passing through are drawn in blue and dashed.}
		\label{fig:Bigon_LeftRight15}
	\end{figure}
\end{center}

Figure \ref{fig:Bigon_LeftRight} depicts a case with $s=4$ and $l=3$. Note that some of these curves $\g_i$ and $\tau_j$ might be equal to each other. For instance a curve $\g_i$, resp.~some curve $\tau_i$, might be equal to some of the other curves $\g_j$, resp.~the other curves $\tau_j$, or a curve $\g_i$ might be equal to a curve $\tau_j$. The argument that now follows works in any of these cases. Note nevertheless that, due to the fact that $P_{\la,\rho}$ is oriented, it is not possible for $\g_i$ to equal $-\g_j$ or $-\tau_j$ for any $j$, and similarly for the $\tau_i$ curves.

\begin{center}
	\begin{figure}[H]
		\centering
		\includegraphics[scale=0.75]{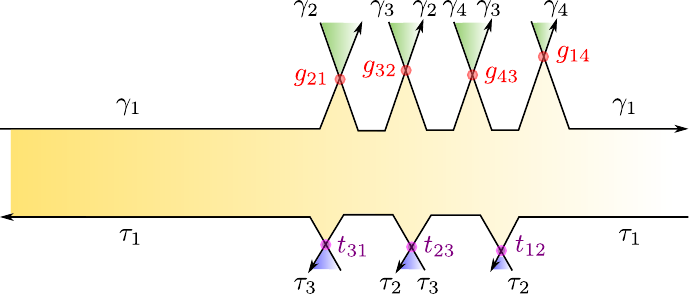}
		\caption{A curve configuration $\SC$ after performing a local bigon move to Figure \ref{fig:Bigon_LeftRight}. The resulting region $\la(B)=\rho(B)$ is not simply-connected.}
		\label{fig:Bigon_LeftRight2}
	\end{figure}
\end{center}

Now consider the resolution of the bigon $B$ with a local bigon move. This yields the local curve configuration depicted in Figure \ref{fig:Bigon_LeftRight2}. The arrows $g_{2,1},g_{3,2},\ldots,g_{i+1,i},\ldots,g_{s,s-1},g_{1,s}$ in the quiver $Q(\cC)$ form a cycle $G$ in $Q(\SC)$. Indeed, the arrow $g_{ij}$ goes from the vertex associated to $\gamma_i$ to that associated to $\gamma_j$ and thus the sequence of arrows $g_{2,1},g_{3,2},\ldots,g_{i+1,i},\ldots,g_{s,s-1},g_{1,s}$ in the quiver $Q(\cC)$ starts with an arrow from $\g_2$ to $\g_1$, then from $\g_3$ to $\g_2$ until eventually having an arrow from $\g_s$ to $\g_{s-1}$ and closing up the cycle with an arrow from $\g_{1}$ to $\g_s$. In particular, this cycle passes through the vertices $\gamma_1, \gamma_s,\dotsb, \gamma_2$ in $Q(\cC)$, in this cyclic order. Similarly, the arrows $t_{1,2}\ldots t_{i,i+1}\ldots t_{l,1}$ in the quiver $Q(\SC)$ form a cycle $T$ in $Q(\SC)$. Note that $G$ might not be irreducible if there are some coincides of the form $\g_i=\g_j$, and similarly for $T$. Here a cycle in a quiver is said to be irreducible if it passes through each of its vertices exactly once. In either case, we can express each of $G$ and $T$ as a sequence of irreducible cycles, which will be referred to as the irreducible components of $G$ and $T$.

\begin{center}
	\begin{figure}[H]
		\centering
		\includegraphics[scale=0.85]{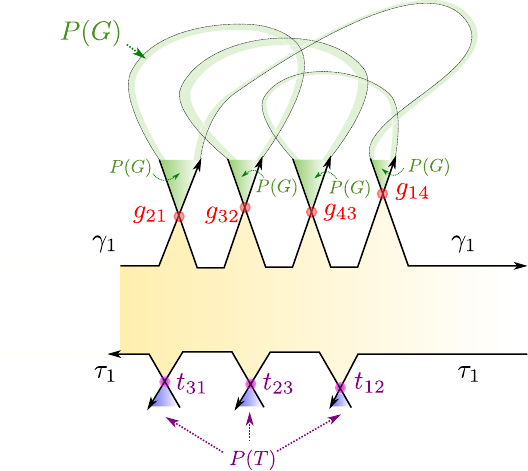}
		\caption{A depiction of the polygons $P(G)$, in green, and $P(T)$, in purple. The polygon $P(G)\sse\Sigma$ is the region bounded by the curves $\gamma_i$ above the intersection points $g_{i+1,i}$. The polygon $P(T)\sse\Sigma$ is the region bounded by the curves $\tau_j$ below the intersection points $t_{i+1,i}$. The figure schematically illustrates that the region $P(G)$ might go deep into the surface $\Sigma$, as might the curves $\gamma_i$. The apparent intersection points of those dotted parts of the $\gamma_i$ are not necessarily intersection points: they are just meant to indicate that the curves $\gamma_i$ might venture elsewhere in $\Sigma$ and the narrow green region is meant to indicate that $P(G)$ also goes along. The situation is analogous for $P(T)$.}
		\label{fig:Bigon_LeftRight3}
	\end{figure}
\end{center}

Consider the cycles $G$ and $T$ in the quiver. If an irreducible component of $G$ or $T$ is empty, then Lemma \ref{lem:empty_cycles_degeneracy} implies that $(Q(\SC),W(\SC))$ is degenerate. This contradicts the assumption. Therefore, all irreducible components of $G$ and $T$ must be non-empty. That means that there exists monomials in the potential given by the irreducible components of the cycle $G$, and similarly for $T$. In particular, there must exist a (possibly disconnected) polygon $P(G)\sse\Sigma$ with vertices $g_{2,1},g_{3,2},\ldots,g_{i+1,i},\ldots,g_{s,s-1},g_{1,s}$ and bounded by $\g_1,\ldots,\g_s$. The number of connected components of $P(G)$ is the number of irreducible cycles of $G$: just for this argument, we allow for polygons to be disconnected so as to ease notation. This polygon $P(G)$ is schematically depicted in green in Figures \ref{fig:Bigon_LeftRight2} and \ref{fig:Bigon_LeftRight3}, the latter figure specifically contains the notation $P(G)$. Similarly, there exists a polygon $P(T)\sse\Sigma$ with vertices $t_{1,2}\ldots t_{i,i+1}\ldots t_{l,1}$ and bounded by $\tau_1,\ldots,\tau_l$. This polygon $P(T)$ is schematically depicted in purple in Figures \ref{fig:Bigon_LeftRight2} and \ref{fig:Bigon_LeftRight3}

Note that from the specific configuration we are studying, as in Figures \ref{fig:Bigon_LeftRight2} and \ref{fig:Bigon_LeftRight3}, the polygon $P(G)$, locally near its vertices, must be right above the intersection points $g_{i+1,i}$. Similarly, the polygon $P(T)$, locally near its vertices, must be right below the intersection points $t_{j+1,j}$. A piece of notation: for an oriented crossing, we refer to its only oriented resolution as the $\infty$-resolution, in other words, the $\infty$-resolution is given by Figure \ref{fig:Bigon_LeftRight5}. For example, for the $g_{21}$ crossing in Figure \ref{fig:Bigon_LeftRight2}, its $\infty$-resolution is such that the strand coming from the north-west continues to the north-east strand and the strand coming from the south-west continues south-east. 

\begin{center}
	\begin{figure}[H]
		\centering
		\includegraphics[scale=0.75]{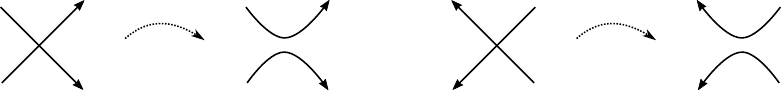}
		\caption{Two instances of an $\infty$-resolution at an intersection point.}
		\label{fig:Bigon_LeftRight5}
	\end{figure}
\end{center}

Consider now the smooth oriented representative $\overline{\g}$ of $[\g_1+\g_2+\ldots+\g_t]$ in $H_1(\Sigma;\Z)$ given by the oriented $\infty$-resolutions of the crossings $g_{21},g_{32},\ldots,g_{t,t-1},g_{1,t}$. That is, $\overline{\g}$ consists of two types of connected components: one connected component is the curve $\gamma_1'$ depicted in Figure \ref{fig:Bigon_LeftRight4} and the other type of connected components form the smooth boundary $\dd^{sm} P(G)$ of the polygon $P(G)$, after smoothing $P(G)$ at its vertices (using the $\infty$-resolution). Because the $\infty$-resolutions are oriented, all the resulting connected components are naturally oriented.

\begin{center}
	\begin{figure}[H]
		\centering
		\includegraphics[scale=0.75]{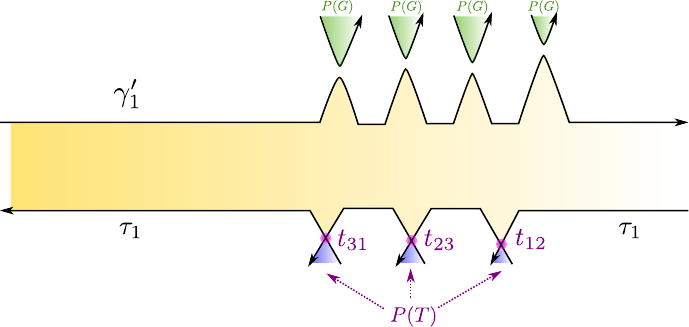}
		\caption{The situation in Figure \ref{fig:Bigon_LeftRight3} after applying an $\infty$-resolution to each of the crossings $g_{i+1,i}$. We emphasize that the polygon $P(G)$ and its smoothing are open embedded pieces of $\Sigma$. In particular, the boundary of its smoothing is a null-homologous curve in $\Sigma$.}
		\label{fig:Bigon_LeftRight4}
	\end{figure}
\end{center}

Similarly, let $\overline{\tau}$ be the smooth oriented representative of $[\tau_1+\tau_2+\ldots+\tau_l]$ in $H_1(\Sigma;\Z)$ given by the oriented $\infty$-resolutions of the crossings $t_{12},t_{23},\ldots,t_{l-1,l},t_{l,1}$. That is, $\overline{\tau}$ consists of two types of smooth connected components: one connected component is the curve $\tau_1'$ (analogous to $\g'_1$ above), and the other connected components come from the smooth boundary $\dd^{sm} P(T)$ of the polygon $P(T)$, after smoothing $P(T)$ at its vertices (again using the $\infty$-resolution).

Now, all the components of $\dd^{sm} P(G)$ and $\dd^{sm} P(T)$ are null-homologous, as $P(G)$ and $P(T)$ are embedded regions in $\Sigma$ bounding them. Independently, the yellow strip region in Figure \ref{fig:Bigon_LeftRight4} shows that $\g_1'$ and $-\tau_1'$ bound an embedded strip. Therefore $[\g_1']=-[\tau_1']$. As a consequence,
$$[\g_1+\g_2+\ldots+\g_t]=[\overline{\gamma}]=[\gamma_1']=-[\tau_1']=-[\overline{\tau}]=-[\tau_1+\tau_2+\ldots+\tau_l].$$
Therefore $[\g_1+\g_2+\ldots+\g_t+\tau_1+\tau_2+\ldots+\tau_l]=0$. The left hand side is a positive linear combination of $\g_i$ and $\tau_j$: as said above, some $\g_i$ might coincide with $\g_j$ or $\tau_j$, but not with $-\g_j$ or $-\tau_j$; similarly for the $\tau_i$ curves. As a consequence, the left hand side is a non-zero homology class and $[\g_1+\g_2+\ldots+\g_t+\tau_1+\tau_2+\ldots+\tau_l]=0$ is a non-trivial relation in homology. Thus the set of curves in $\SC$ does not span $H_1(\Sigma;\Z)$, as there are exactly $b_1(\Sigma)$ of them and there is a non-empty subset of linearly dependent classes.
\color{black}
\end{proof}

Proposition \ref{thm:QPnondeg} and Proposition \ref{prop:local_bigons} show that Assumption \ref{assumption:bigons} is always satisfied for the curve configurations $\SC(\bG)$ associated to plabic fences $\bG$.

\subsection{Proof of Lemma \ref{lem:reduced_parts}}\label{ssec:proof_reduction}
For Part (i), if $\SC$ bounds bigons, we obtain $\SC_{red}$ by iteratively applying Theorem \ref{thm:HassScott} to remove them. Lemmas \ref{lem:triplepoint} and \ref{lem:bigon} imply that $(Q(\SC),W(\SC))$ undergoes a sequence of right-equivalences and local reductions. Note that Lemma \ref{lem:bigon} implies that these reductions exist. By Definition \ref{def:localreduction}, each local reduction can be understood as splitting off a trivial direct summand from $(Q(\SC),W(\SC))$. Indeed, an $ab$-reduction of $(Q,W)$ yields a decomposition $(Q_{ab},W_{ab})\oplus(Q',W')$, where $Q_{ab}$ has just two arrows $a,b$ and $W_{ab}=(a-U)(b-V)$ for some polynomials $U, V$ determined by polygon counting in $\SC$. By considering the automorphism $\overline{a}:=a-U, \overline{b}:=b-V$ in the path algebra, this potential is equivalent to the trivial potential $W_{\overline{ a}\overline{ b}}=\overline{ a}\overline{ b}$. Thus $(Q_{ab},W_{ab})$ is trivial and $(Q',W')$ is the $ab$-reduction of $(Q,W)$. Therefore, after Theorem \ref{thm:HassScott} is iteratively applied until we obtain $\SC_{red}$, $(Q,W)$ undergoes a sequence of local reductions until it becomes right-equivalent to $(Q(\SC_{red}),W(\SC_{red}))$: in each such local reduction a trivial summand splits off and we are left with $(Q(\SC_{red}),W(\SC_{red}))$. Since $\SC_{red}$ has no bigons, the quadratic part of $W(\SC_{red})$ vanishes and thus $(Q(\SC_{red}),W(\SC_{red}))$ is reduced. Therefore, it is the reduced part of $(Q(\SC),W(\SC))$, up to right-equivalence and Lemma \ref{lem:reduced_parts}.(i) follows.

For Part (ii), Lemma \ref{lem:empty_cycles_degeneracy}, or directly Definition \ref{def:non_degenerate}, implies that $Q(\SC)$ has no empty 2-cycles, since $(Q(\SC),W(\SC))$ is non-degenerate. Therefore, if $\SC$ bounds no bigons, then $Q(\SC)$ has no 2-cycles and thus neither does its reduced part. 
\hfill$\Box$


\section{A Lagrangian filling for every cluster seed}\label{sec:Symplectic}

The goal of this section is to prove Theorem \ref{thm:main}. We use the results developed in Sections \ref{sec:curves} and \ref{sec:QP_PlabicFence}.


\subsection{Preliminaries}\label{ssec:prelim_symp} Let us consider a positive braid word $\beta$ on $n$-strands and its associated Legendrian link $\La_\beta\sse(\R^3,\xi_{st})$. By definition, $\La_\beta\sse(\R^3,\xi_{st})$ is the Legendrian link given by the front rainbow closure of $\beta$. Figure \ref{fig:FrontProjection1} depicts such a rainbow closure, i.e.~a front for $\La_\beta$ in the $(x,z)$-plane $\R^2$. The box with the label $\beta$ contains exactly the crossings of $\beta$. The ideal contact boundary $(T^\infty\R^2,\la_{st})$ of the cotangent bundle $(T^*\R^2,\la_{st})$ is contactomorphic to the 1-jet space $(J^1S^1,\xi_{st})$, where the Legendrian zero section $S^1\sse J^1S^1$ is the fiber of the projection $T^\infty\R^2\lr\R^2$ onto the base $\R^2$. Let us consider a Legendrian embedding $\iota_0:S^1\lr(\R^3,\xi_{st})$ of the (unique) max-tb Legendrian unknot $\La_0\sse(\R^3,\xi_{st})$.

\begin{center}
	\begin{figure}[H]
		\centering
		\includegraphics[scale=0.6]{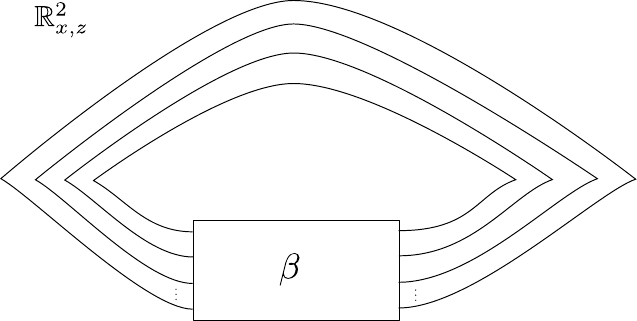}
		\caption{The rainbow closure front projection of the positive braid word $\beta$.}
		\label{fig:FrontProjection1}
	\end{figure}
\end{center}

By the Weinstein neighborhood theorem \cite[Section 2.5]{Geiges08}, any Legendrian link $\La\sse(J^1S^1,\xi_{st})$ can be satellited along the Legendrian embedding $\iota_0$, as there is a neighborhood of the max-tb Legendrian unknot $\La_0\sse(\R^3,\xi_{st})$ contactomorphic to $(J^1S^1,\xi_{st})$, where the contactomorphism extends $\iota_0$. We denote the resulting Legendrian link in $(\R^3,\xi_{st})$ by $\iota_0(\La)$. If we consider the Legendrian $\La(\alpha)\sse(J^1S^1,\xi_{st})$ given by the braid diagram of a positive braid word $\alpha$ in $n$-strands, then $\iota_0(\La(\beta\Delta^2))\cong \La_\beta$ are Legendrian isotopic in $(\R^3,\xi_{st})$, where $\Delta$ is the half-twist on $n$ strands. See \cite[Section 2.2]{CasalsNg} for further details. Note that the class of Legendrian links $\La_\beta$ includes the max-tb representatives of all algebraic links and also Legendrian representatives of infinitely many satellite and hyperbolic links, cf.~\cite[Section 6]{CasalsHonghao}.

Let $L\sse(\R^4,\la_{st})$ be an exact oriented Lagrangian filling.\footnote{Of some Legendrian link $\dd L$ in the ideal contact boundary of $(\R^4,\la_{st})$.} Consider an arbitrary but fixed convex neighborhood $\Op(L)$ which is symplectomorphic to a convex neighborhood of the zero section in $(T^*L,\la_{st})$ and the hypersurface $\dd\Op(L)\sse(\R^4,\la_{st})$ is a contact hypersurface contactomorphic to the ideal contact boundary $(T^\infty L,\la_{st})$ of $(T^*L,\la_{st})$. An embedded (co)oriented connected curve $\g\sse L$ lifts to a Legendrian knot $\La_\g\sse\dd\Op(L)$, as it defines a front under the Legendrian projection $(T^\infty L,\la_{st})\lr L$. Since there is a canonical correspondence between oriented and co-oriented curves in an oriented surface, we always discuss oriented curves rather than co-oriented curves. Following \cite{CasalsWeng22}, we introduce the following:

\begin{definition}[$\L$-compressing systems]\label{def:compressible}
Let $L\sse(\R^4,\la_{st})$ be an exact oriented Lagrangian filling and $\g\sse L$ an embedded oriented curve. By definition, $\g$ is said to be $\bL$-compressible if there exists a properly embedded Lagrangian 2-disk $D\sse (T^*\R^2\setminus \Op(L))$ such that $\dd\overline{D}\cap \dd\Op(L)=\La_\g\sse\R^4$ and the union of $\overline{D}\cup \nu_\gamma$ is a smooth Lagrangian disk, where $\nu_\gamma\sse\Op(L)$ is the Lagrangian conormal cone of $\gamma$. This Lagrangian disk is said to be an $\mathbb{L}$-compressing disk for $\g$.

A collection $\Gamma=\{\gamma_1,\ldots,\gamma_b\}$ of such curves in $L$, with a choice of $\mathbb{L}$-compressing disks $\sD=\{D_1,\ldots,D_b\}$ for each curve, is said to be an $\L$-compressing system for $L$ if $D_i\cap D_j=\emptyset$ for all $i,j\in[b]$ and the (homology classes of the) curves in $\Gamma$ form a basis of $H_1(L;\Z)$.\hfill$\Box$
\end{definition}

We ease notation and refer to the collection $\sD$ as the $\bL$-compressing system for $L$. A piece of notation:



\subsection{Curve configurations and $\bL$-compressing systems} Let $L\sse(\R^4,\la_{st})$ be a Lagrangian filling and $\Gamma$ a $\bL$-compressing system.

\begin{definition}
By definition, the curve configuration $\SC(\Gamma)$ associated to $\Gamma$ is the configuration of oriented closed embedded curves $\Gamma$ in $L$. If $\sD$ is the collection of $\bL$-compressing disks associated to $\Gamma$, we also write $\SC(\sD)$ for $\SC(\Gamma)$.\hfill$\Box$
\end{definition}

Note that $\SC(\Gamma)$ is considered as a collection of smooth oriented curves in a smooth surface, with no need to record the Lagrangian condition on $L$ and the disks in $\sD$. The notation $\SC(\Gamma)$, instead of just $\Gamma$, is in order to emphasize the smooth embedded curves, rather than the symplectic topological aspects of $\Gamma$.

\begin{remark}
Suppose that a sequence of triple point moves and local bigon moves is applied to an $\bL$-compressing system $\SC(\Gamma)$. This yields a configuration $\SC'$ in $L$. Front homotopies, which include triple point moves and local bigon moves, lift to Legendrian isotopies in the ideal contact boundary. The trace of a Legendrian isotopy yields an invertible Lagrangian concordance in the symplectization. By concatenating the disks associated to $\Gamma$ with this Lagrangian concordance, we obtain an $\bL$-compressing system $\Gamma'$ for $L$ such that $\SC'=\SC(\Gamma')$. We will consider two such configurations $\SC,\SC'$ equivalent and two such $\bL$-compressing systems $\Gamma,\Gamma'$ equivalent.\hfill$\Box$
\end{remark}


\subsection{$\bL$-compressing systems for $\La_\beta$}\label{ssec:compressing_systems}

Consider the plabic fence $\bG(\beta)$ associated to $\beta$, as introduced in Section \ref{sec:QP_PlabicFence}. Then we have the following facts:

\begin{enumerate}
    \item The alternating strand diagram of $\bG(\beta)$ is a front for $\La(\beta\Delta^2)\sse(J^1S^2,\xi_{st})$. This is proven in \cite[Section 2]{CasalsWeng22} and see also \cite[Section 5.1]{STWZ}. Thus, after including $(J^1S^1,\xi_{st})$ into $(\R^3,\xi_{st})$ as a neighborhood of the max-tb Legendrian unknot, it is Legendrian isotopic to the Legendrian link $\La_\beta\sse(\R^3,\xi_{st})$.\\

    \item From the conjugate surface $\Sigma(\bG(\beta))$ one can construct an embedded exact Lagrangian filling $L_\beta$ of $\La_\beta$, defined up to compactly supported Hamiltonian isotopy in $(T^*\R^2,\la_{st})$. By construction, $L_\beta$ is smoothly isotopic to the (smooth) surface $\Sigma(\bG(\beta))$. The construction of $L_\beta$ is done in \cite[Section 4.2]{STWZ}, and \cite[Prop.~2.4]{CasalsLi22} shows that the resulting Hamiltonian isotopy class is independent of the choices in the construction. In particular, it gives an oriented embedded exact Lagrangian filling $L_\beta\sse(\R^4,\la_{st})$ in the symplectization of $(\R^3,\xi_{st})$, after we have identified the standard cotangent bundle $(T^*\R^2,\omega_{st})$ with the symplectic Darboux $(\R^4,\omega_{st})$.\\

    \item The conjugate surface $\Sigma(\bG(\beta))$ also gives a $\bL$-compressing system $\Gamma(\beta)$ for $L$. The existence of such $\bL$-compressing system is proven in \cite[Section 3]{CasalsWeng22}, see also \cite[Section 2]{CasalsLagSkel} and \cite[Section 4.2]{CasalsLi22}. By construction, the configuration of curves in $\SC(\Gamma(\beta))$ coincides with the configuration of curves in $\SC(\bG(\beta))$. The Lagrangian $\L$-compressing disks, showing that this is indeed an $\L$-compressing system, are constructed in \cite[Section 3]{CasalsWeng22}.\footnote{Intuitively, the corresponding $\bL$-compressing Lagrangian disks are the faces of $\bG$, which are disjoint Lagrangian pieces (disks) of the Lagrangian zero section $\R^2$ in $T^*\R^2$. See Figure \ref{fig:PlabicFence_AddingCrossing3} (left).} Let $\sD_\beta$ denote its associated collection of $\bL$-compressing Lagrangian disks.
\end{enumerate}

The relevant fact about the $\bL$-compressing system $\Gamma(\beta)$ is that it can be used to produce new Lagrangian fillings from $L_\beta$.

\begin{remark}
For context, let $\beta$ be a positive braid word and $\La_\beta\sse(\R^3,\xi_{st})$ its associated Legendrian link. The union $\L_\beta\sse(\R^4,\la_{st})$ of the embedded exact Lagrangian filling $L_\beta$ and all the closures of the Lagrangian disks of the collection $\mathscr{D}_\beta$ (extended by the Lagrangian conormal cones of curves in $\Gamma(\beta)$) is an arboreal Lagrangian skeleton for the Weinstein pair given by $(\R^4,\la_{st})$ and a Weinstein ribbon of $\La_\beta$, see \cite[Section 2]{Eliashberg18_Revisited}.\hfill$\Box$
\end{remark}

\subsection{Lagrangian disk surgery} Lagrangian disk surgery was introduced in \cite[Section 2.3]{MLYau17}, following closely the Lagrange surgery defined in \cite{Polterovich_Surgery}. A more recent account reviewing Lagrangian surgery is \cite[Section 6.2]{CasalsMurphyPresas}, and see also \cite[Proposition 5.15]{STWZ}, \cite[Theorem 1.5]{STW} and \cite[Section 4.5]{PascaleffTonkonog20}. In our context, it is used as follows. Consider a Legendrian link $\La\sse(\R^3,\xi_{st})$, seen as $(S^3,\xi_{st})$ minus a point, and an embedded exact Lagrangian filling $L\sse(\R^4,\la_{st})$ in the standard Darboux 4-ball symplectic filling $(S^3,\xi_{st})$. Suppose that there exists a properly embedded Lagrangian disk $D\sse\R^4\setminus L$ such that $\dd\overline{D}\sse \mbox{int}(L)$ is a smooth embedded connected curve, where $\mbox{int}(L)$ in the interior of $L$.\footnote{In the discussion of Subsection \ref{ssec:compressing_systems}, these Lagrangian disks are obtained by considering the union of the disks $D_\g\in\sD$ in an $\L$-compressing system $\sD$ and concatenating them with (a piece of) the Lagrangian conormal cone in $T^*L$ of the corresponding (co)oriented curve $\g$.} Then Lagrangian disk surgery is an operation that inputs the pair $(L,D)$ and outputs another pair $(L',D')$, with the same properties: $L'$ is an embedded exact Lagrangian filling of $\La$ and $D'$ is an embedded Lagrangian disk in the complement of $L'$ with embedded boundary on $L'$. In this process, it is crucial that $D$ is an embedded Lagrangian disk, and not just immersed. Two facts about Lagrangian disk surgery are:

\begin{itemize}
    \item[-] The Lagrangians $L$ and $L'$ are not necessarily exact Lagrangian isotopic. That is, there might not be a compactly supported Hamiltonian isotopy from $L$ to $L'$.\\

    \item[-] The Lagrangians $L$ and $L'$ are smoothly isotopic, relative to their boundaries. The Lagrangian disk surgery of $L'$ along $D'$ yields $(L,D)$ back, up to compactly supported Hamiltonian isotopy.
\end{itemize}

The first item above precisely indicates that we can potentially produce a new Lagrangian filling by using a given Lagrangian filling and a Lagrangian disk as above. See \cite{CasalsWeng22,Polterovich_Surgery,MLYau17} and references therein for these facts and more details.

\begin{remark}\label{rmk:immersed} Lagrangian disk surgery is {\it not} known to exist if the boundary $\dd\overline{D}\sse L$ is an immersed curve, rather than embedded. Similarly, the disk $D$ must be embedded. In general, it is not just a lack of available constructions, as \cite[Section 4.10]{CasalsWeng22} presents examples of immersed disks that one cannot perform Lagrangian disk surgery to, due to the existence of frozen vertices coming from absolute 1-cycles in $L$.\hfill$\Box$
\end{remark}


\subsection{Effect of Lagrangian surgery on curve configurations}\label{ssec:effectLag} Let $L\sse(\R^4,\la_{st})$ be an exact Lagrangian filling and $\Gamma$ an $\bL$-compressible system for $L$. Consider a disk $D\in\sD$. Lagrangian disk surgery on $D$ leads to another exact Lagrangian filling $\mu_D(L)\sse(\R^4,\la_{st})$ endowed with a curve configuration $\mu_D(\Gamma)$ and a collection of Lagrangian disks $\mu_D(\sD)$ bounding the curves in $\mu_D(\Gamma)$. There is a natural bijection between the disks in $\sD$ and those in $\mu_D(\sD)$ and a diffeomorphism between $L$ and $\mu_D(L)$, as stated above. Now, the configuration $\mu_D(\Gamma)$ might not be an $\bL$-compressible system because it might contain immersed curves.

In general, these new curve configurations $\mu_D(\Gamma)$ obtained by Lagrangian disk surgery on a disk associated to $\Gamma$ can be understood via the following:

\begin{lemma}\label{lem:configuration_Lagrangiansurgery} Let $L\sse(\R^4,\la_{st})$ be an exact Lagrangian filling, $\Gamma$ an $\bL$-compressible system for $L$ with $\L$-compressing disks $\sD$, and $\SC(\Gamma)$ its associated configuration in $L$. Consider a disk $D\in\sD$ with boundary the lift of $\g\in\SC(\Gamma)$. Then there is a natural identification between the configuration $\mu_D(\Gamma)$ and the $\g$-exchange of $\Gamma$.
\end{lemma}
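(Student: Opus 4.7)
The plan is to establish the claim through a local model computation, after first reducing to a neighborhood of $\gamma$. The Lagrangian disk surgery at $\tilde{D}:=\overline{D}\cup\nu_\gamma$ is supported in a Weinstein neighborhood $\nu(\tilde{D})\sse\D^4$, so $\mu_D(L)$ agrees with $L$ in the complement. Since the $\bL$-compressing disks $D_i\in\sD$ for $\gamma_i\neq\gamma$ are disjoint from $\tilde D$ by the definition of an $\bL$-compressing system, each such $D_i$ persists unchanged through the surgery, and its boundary curve, viewed on the surface $\mu_D(L)$, coincides with $\gamma_i$ away from a tubular neighborhood $U$ of $\gamma$. The $\gamma$-exchange of Definition \ref{def:exchange} is likewise local in $U$. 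Hence it suffices to compare the two operations inside $U$.

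For the local computation, I would invoke the Weinstein neighborhood theorem to model $\nu(\tilde D)\cong T^*\D^2$ with $\tilde D$ the zero section and $\gamma=\dd\D^2$, and take adapted coordinates $(s,r,\sigma,\rho)$ with $s\in S^1$ along $\gamma$, $r$ inward-radial, and $\omega=ds\wedge d\sigma+dr\wedge d\rho$. After a further Weinstein normalization, $\tilde D=\{r\les 0,\sigma=\rho=0\}$ and $L=\{r=0,\sigma=0\}$, so the two Lagrangians meet cleanly along $\gamma$. The Polterovich--Yau disk surgery is then modeled fiberwise over $s$ by the standard smoothing of two transverse lines at the origin in the $(r,\rho)$-plane, replacing $L\cup\tilde D$ by a hyperbolic branch such as $r\rho=-\varepsilon$ inside $\sigma=0$; this produces $\mu_{\tilde D}(L)$ together with a smooth identification with $L$ outside $U$.

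Next, I would trace what happens to the other curves. An arc of some $\gamma_i$ crossing $\gamma$ transversely meets $U$ as a short arc in $L$ intersecting $\gamma$ at a single point, and the compressing disk $D_i$ contains this arc in its boundary. Since $D_i$ is preserved by the surgery, the new boundary of $D_i$ on $\mu_{\tilde D}(L)$ is obtained by transporting the arc across the hyperbolic branch. The sign of the intersection $\gamma_i\cap\gamma$ determines on which side of $\tilde D$ the arc emerges: at a positive intersection the transported arc is forced to wind once around $\gamma$ inside $\mu_{\tilde D}(L)$, realizing a positive Dehn twist on that segment, while at a negative intersection it passes straight through without winding, realizing the identity. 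Under the smooth identification $\mu_{\tilde D}(L)\cong L$ that is the identity outside $U$, this precisely recovers the local prescription of Definition \ref{def:exchange}. The orientation of $\gamma$ itself is reversed in $\mu_D(L)$ since the conormal framing flips across the surgery, matching $\mu_\gamma(\gamma)=-\gamma$.

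The main obstacle will be the sign bookkeeping in the last step: the asymmetry between positive and negative intersections must be traced to a definite choice of coorientation on $\tilde D$ (equivalently, the sign in the Polterovich--Yau model), and this coorientation must be shown to agree with the intersection sign conventions from Section \ref{ssec:planar_moves}. This is essentially a Picard--Lefschetz-type computation in the two-dimensional symplectic slice, as foreshadowed in the proof of Proposition \ref{prop:quivermutation}. Once these signs are fixed consistently, the local comparison closes and, together with the initial locality reduction, the lemma follows.
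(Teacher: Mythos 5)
Your reduction to a neighborhood of $\g$ and the observation that the other disks in $\sD$ persist through the surgery are fine, and your overall route is genuinely different from the paper's: the paper does not build an explicit Weinstein local model, but instead invokes the identification of Lagrangian disk surgery with weave mutation from \cite[Section 4.8]{CasalsZaslow} and verifies the $\g$-exchange by computing the intersection numbers of $\g$, $\tau_+$, $\tau_-$ before and after the mutation in the local $2$-weave (Lagrangian cylinder) model.

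However, as written your local computation has a genuine gap, and it sits exactly where the content of the lemma is. First, the proposed model of the surgery is not correct: replacing, fiberwise over $s$, the union of $\{r=0\}$ and $\{\rho=0,\ r\les 0\}$ by a hyperbola branch $r\rho=-\varepsilon$ inside $\{\sigma=0\}$ describes a resolution of the clean intersection of $L$ with the disk $\overline{D}\cup\nu_\gamma$ -- the resulting Lagrangian runs off along the disk direction (and, for one branch, along the continuation $r>0$ where there is nothing), so it neither agrees with $L$ on both ends $\rho\to\pm\infty$ nor is diffeomorphic to $L$; it is not $\mu_D(L)$, which must coincide with $L$ outside the neighborhood. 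Second, and more importantly, the asymmetry you need -- arcs meeting $\g$ positively acquire a positive Dehn twist while arcs meeting $\g$ negatively are unchanged -- cannot come out of an $s$-independent fiberwise picture at all: it is produced by how the new annulus wraps the circle direction in the genuine surgery model (Yau's model, or the $z_1z_2$-fibration/conical models referenced in the paper's remark after the lemma). In your write-up this asymmetry is asserted (``the transported arc is forced to wind once around $\g$''), and you yourself defer the sign/coorientation matching as ``the main obstacle''; but that matching, i.e.\ the one-sided monodromy computation, is precisely the statement being proved. So the proposal reduces the lemma to an unproved claim rather than establishing it; to close it you would need to set up the actual local model of Lagrangian disk surgery and carry out the Picard--Lefschetz-type computation there, or else fall back on an identification such as the weave-mutation model the paper uses.
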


\begin{proof}
Lagrangian disk surgery occurs in a neighborhood of $D$ in $(\R^4,\la_{st})$. It is shown in \cite[Section 4.8]{CasalsZaslow} that it can be locally modeled by the weave mutation in Figure \ref{fig:MutationWeave}. For this proof, we assume familiarity with \cite[Section 2]{CasalsZaslow}, or \cite[Section 3]{CasalsWeng22}. It suffices to understand how Lagrangian disk surgery along the disk $D$ bounding $\g$ affects the boundary of the other disks in $\sD$. There are two cases: positive and negative intersections, represented by the segments $\tau_+$ and $\tau_-$ in Figure \ref{fig:CExchange_TwoBefore} (left). 

\begin{center}
	\begin{figure}[H]
		\centering
		\includegraphics[scale=0.7]{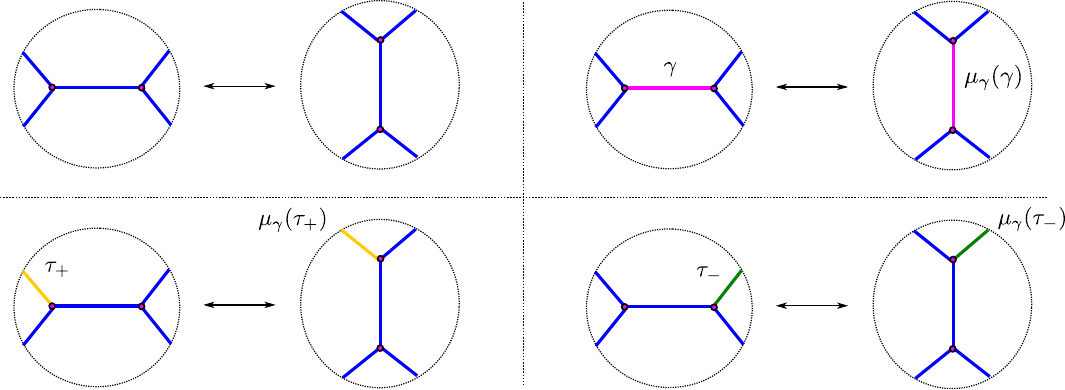}
		\caption{(Upper left) Lagrangian disk surgery in the 2-weave model along a short $\sf I$-cycle $\g$. (Upper right) The short $\sf I$-cycle $\g$ depicted in pink, drawn before and after $\mu_\g$. (Lower left) A weave line in yellow smoothly representing a relative homology class $\tau_+$ with unique geometric intersection $+1$ with $\g$, drawn before and after $\mu_\g$. (Lower right) A weave line in green smoothly representing a relative homology class $\tau_-$ with a unique geometric intersection $-1$ with $\g$, also drawn before and after $\mu_\g$.}\label{fig:MutationWeave}
	\end{figure}
\end{center}

In the weave diagram, the segments $\tau_+$ and $\tau_-$ in Figure \ref{fig:CExchange_TwoBefore} (left) can be represented by the weave lines $\tau_+$ and $\tau_-$ in the second row of Figure \ref{fig:MutationWeave}.\footnote{The segment $\tau_-$ remains green, while $\tau_+$ is now drawn in yellow because the weave is typically drawn in blue.} In this situation, before the mutation, $\tau_+\cap\gamma=+1$, $\tau_-\cap\gamma=-1$ and $\tau_+\cap\tau_-=0$. Here it suffices to record homological intersections between these curves, as the smooth curves in this case are uniquely determined (up to isotopy) by their relative homology classes. The 2-weave in the left of Figure \ref{fig:MutationWeave} (upper left) represents a Lagrangian cylinder, and the left parts of the other three figures indicate how to draw $\tau_\pm$ and $\gamma$ in that cylinder.

It suffices to understand how $\tau_+,\tau_-$ and $\g$ in the weave change under weave mutation. This is drawn in the right parts of the upper right and the second row of Figure \ref{fig:MutationWeave}. The resulting curves $\mu_\g(\tau_+)$ and $\mu_\g(\tau_-)$ are shown, also in yellow and green respectively; the curve $\mu_\g(\g)$ is drawn in pink. By using the intersection numbers on weaves, cf.~\cite[Section 2]{CasalsZaslow} or \cite[Section 4.4]{CGGLSS}, we obtain that $\mu_\g(\tau_+)\cap\mu_\g(\g)=-1$, $\mu_\g(\g)\cap\mu_\g(\tau_-)=-1$ and $\mu_\g(\tau_+)\cap \mu_\g(\tau_-)=1$. Therefore the curves change exactly according to a $\g$-exchange.
\end{proof}

\begin{remark}
Lemma \ref{lem:configuration_Lagrangiansurgery} could be proven using other models, such as plabic fences and Lagrangian conjugate surfaces, cf.~\cite[Section 3]{CasalsLi22} or \cite[Section 5.2]{STWZ}. We also refer the reader to \cite[Section 2]{STW} for an explanation using a conical model and the discussion in \cite[Section 4]{PascaleffTonkonog20}.\hfill$\Box$
\end{remark}
\color{black}
Lemma \ref{lem:configuration_Lagrangiansurgery} clarifies the combinatorics of an $\L$-compressing system $\Gamma$ that lead to immersed curves for $\mu_D(\Gamma)$, as we momentarily explain.

\begin{remark}\label{rmk:quiver_mut} Before discussing that, let us point that we will now use $\mu_vQ$ to refer to mutation of a quiver $Q$ at a vertex $v$ without removing 2-cycles (so $Q$ is allowed to have 2-cycles and $v$ be a vertex of a 2-cycle) and leading to quivers with loops. In the original context of quiver mutations, cf.~\cite[Definition 4.2]{FominZelevinsky_ClusterI} or see \cite[Definition 2.1.2]{FWZ}, 2-cycles are removed by default and there are no loops. The 2-cycle removal is step (3) in \cite[Definition 2.1.2]{FWZ}. (In fact, some authors in the literature do not allow a quiver to have 2-cycles or loops.)

In the definition of QP-mutation, \cite[Definition 5.5]{DWZ}, one allows for 2-cycles but takes the reduced part as part of the definition of QP-mutation. (Intuitively, 2-cycles that are accounted for in the potential thus disappear after QP-mutation.) Note that in that context, see \cite[Definition 4.1]{DWZ}, the quiver is not allowed to have loops. For the upcoming discussion, we allow 2-cycles, loops and use the corresponding general form of mutation. Specifically, if $Q$ has a 2-cycle one of whose vertices is $v$, then we denote by $\mu_vQ$ the quiver mutation given by steps (1) and (2) of \cite[Definition 2.1.2]{FWZ} (but not step (3)), where loops might be created. That is, if we have two arrows $a,b\in Q_1$, $h(a)=t(b)=v$, in a 2-cycle, then mutation at $v$ leads to the composed arrow $[ab]\in (\mu_vQ)_1$, which is a loop in $\mu_vQ$.\hfill$\Box$
\end{remark}

Now,  Proposition \ref{prop:quivermutation} implies that $\g$-exchange leads to a quiver mutation, i.e.~ $\mu_\g Q(\Gamma)=Q(\mu_{D_\g}(\Gamma))$. We have the following two facts:

\begin{enumerate}
\item By construction, the curve configuration $\mu_D(\Gamma)$ has an immersed curve if and only if the quiver $Q(\mu_D(\Gamma))$ contains a loop.\\

\item  If a quiver $Q$ has no loops, then the quiver $\mu_vQ$ has a loop if and only if $v$ was part of a 2-cycle in $Q$. (This is by the definition of quiver mutation, cf.~Remark \ref{rmk:quiver_mut}.)
\end{enumerate}

Facts (1) and (2) above imply:

\begin{lemma}\label{lem:2cycleimmersed} Let $\Gamma$ be an $\L$-compressing system. Then $Q(\Gamma)$ has a $2$-cycle if and only if $\mu_D(\Gamma)$ has an immersed curve.
\end{lemma}

\begin{proof}
By fact (1), $\mu_{D_\g}(\Gamma)$ has an immersed curve if and only if the quiver $Q(\mu_{D_\g}(\Gamma))$ contains a loop. By fact (2) and Proposition \ref{prop:quivermutation}, $Q(\mu_{D_\g}(\Gamma))=\mu_\g Q(\Gamma)$ has a loop if and only if (the vertex associated to) $\g$ was part of a 2-cycle in $Q(\Gamma)$, and so $Q(\Gamma)$ has a 2-cycle.
\end{proof}

\begin{center}
	\begin{figure}[H]
		\centering
		\includegraphics[scale=0.5]{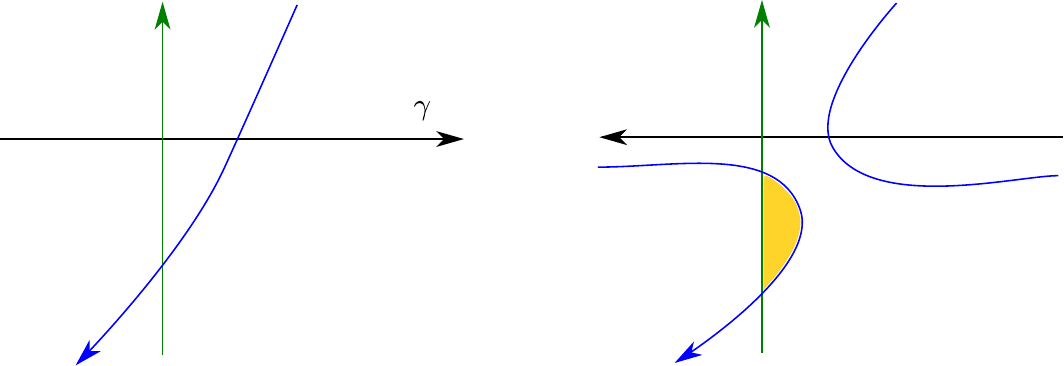}
		\caption{(Left) Three curves in a curve configuration $\SC$, depicted in blue, green and black. (Right) The result of applying a $\g$-exchange to $\SC$: note that, in this case, a (local) bigon is bounded by the image of the green and blue curves under the $\g$-exchange.}
		\label{fig:Example_Immersed}
	\end{figure}
\end{center}
\color{black}
\begin{example}\label{ex:Example_Immersed}
Figure \ref{fig:Example_Immersed} illustrates the effect of a $\g$-exchange applied to the curves depicted in green and blue in Figure \ref{fig:Example_Immersed} (left). The result is depicted in Figure \ref{fig:Example_Immersed} (right). Note that this $\g$-exchange will create a bigon, which is drawn in yellow in Figure \ref{fig:Example_Immersed} (right). The resulting configuration is non-reduced, bounding bigons. In this particular case, it is immediate how to reduce it, as we can apply a local bigon move to the yellow bigon. In general, Theorem \ref{thm:HassScott} is used.

Figure \ref{fig:Example_Immersed2} illustrates the effect of a $\g$-exchange (along the black curve $\g$) applied to the blue curve drawn in Figure \ref{fig:Example_Immersed2} (left). The result is depicted in Figure \ref{fig:Example_Immersed2} (right). Note that this $\g$-exchange will create an immersed (blue) curve even if the original blue curve was embedded. The cause for this immersed point (emphasized in orange) is the bigon bounded by the local blue curve and $\g$ in Figure \ref{fig:Example_Immersed2} (left).

As a consequence, it is not immediate that $\L$-compressing systems are readily carried through $\g$-exchanges. For instance, one of the curves might become immersed, as described in Figure \ref{fig:Example_Immersed2}. Another instance is that two curves that did not bound a bigon will bound a bigon after the $\g$-exchange, as described in Figure \ref{fig:Example_Immersed}. In that latter case, a second exchange at either of these two curves will result in the other curve turning into an immersed curve.\hfill$\Box$
\end{example}

\begin{center}
	\begin{figure}[H]
		\centering
		\includegraphics[scale=0.5]{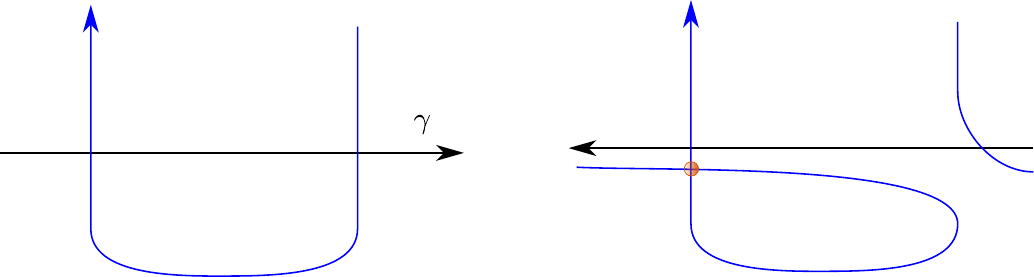}
		\caption{(Left) Two curves in a curve configuration $\SC$, depicted in blue and black. (Right) The result of applying a $\g$-exchange to $\SC$: note that, in this case, an immersed point exists in the image of blue curve under the $\g$-exchange. That is, even if the blue curve (on the left) was originally embedded, the result of a $\g$-exchange might turn it into an immersed curve.}
		\label{fig:Example_Immersed2}
	\end{figure}
\end{center}


\subsection{Iteration of Lagrangian disk surgeries and QP non-degeneracy}\label{ssec:iteration}

Let $L\sse(\R^4,\la_{st})$ be an embedded exact Lagrangian filling and $\Gamma$ an $\L$-compressing system for $L$. Suppose that the configuration $\SC(\Gamma)$ is reduced and $(Q(\SC(\Gamma)),W(\SC(\Gamma)))$ is non-degenerate. In particular, there are no 2-cycles in $Q(\SC(\Gamma))$.


\subsubsection{The problem that arises with iterations} Consider an $\L$-compressing disk $D\in\sD(\Gamma)$ with boundary $\g\in\SC(\Gamma)$ and perform Lagrangian disk surgery on $(L,D)$. This produces a new Lagrangian filling $(L',D')$ with an $\L$-compressing disk $D'$. The $\L$-compressing system $\Gamma$ for $L$ yields an $\L$-compressing system $\Gamma'$ for $L'$, whose configuration of curves $\SC(\Gamma')$ is as described by Lemma \ref{lem:configuration_Lagrangiansurgery}. We wish to be able to iterate this procedure arbitrarily: given any curve $\g_1\in\SC(\Gamma')$ with associated $\L$-compressing disk $D_1$, we want to be able to perform Lagrangian disk surgery to $L'$ along $D_1$ and obtain an $\L$-compressing system $\Gamma''$ for the new Lagrangian filling $L''=\mu_{D_1}(L')$. (And so on.)

The problem with iterating is that the $\g$-exchange of some curve configuration $\SC_{bad}(\Gamma)$, appearing at some point in the iteration, might be a ``curve configuration'' $\mu_\g(\SC_{bad}(\Gamma))$ with an immersed curve. See Example \ref{ex:Example_Immersed} above. Here we write ``curve configuration'' in quotations to mean a curve configuration as in Definition \ref{def:configuration} but where we relax the requirement that the curves are embedded to merely being immersed. The problem is that $\mu_\g(\SC_{bad}(\Gamma))$ will not admit an $\L$-compressing system and we cannot keep iterating arbitrarily. For instance, it would not be possible to perform a Lagrangian disk surgery along that immersed curve.

\begin{remark} Because of the properties of the cluster algebra constructed in \cite{GSW} (see also \cite{CasalsWeng22,CGGLSS}), and the arguments in this article (some already present in \cite{STW} indeed), the appearance of immersed curves is the only problem. To be precise, if we knew in advance that no immersed curves will appear in this iteration process, then the procedure presented above -- as already discussed in \cite{STW} -- and \cite[Theorem 1.1]{CasalsWeng22} would suffice to prove Theorem \ref{thm:main}. In that sense, immersed curves are the only problem.\hfill$\Box$
\end{remark}

By Lemma \ref{lem:2cycleimmersed}, the problem of an immersed curve appearing in some $\g$-exchange $\mu_\g(\SC(\Gamma))$ is equivalent to the quiver $Q(\SC(\Gamma))$ having a 2-cycle containing the vertex for $\g$. There are two situations in which $Q(\SC(\Gamma))$ might have a 2-cycle:
\begin{itemize}
    \item[(i)] It might be that $\SC(\Gamma)$ does not bound any bigons but there are curves $\g_i,\g_j\in\SC(\sD_1)$ with two points of geometric intersection, one positive and one negative, and these two intersection points do not bound a bigon. In this case the quiver $Q(\SC(\Gamma))$ has a 2-cycle but the potential $W(\SC(\Gamma))$ would not have a monomial accounting for it (as there is no bigon).\\

    \item[(ii)] It might be that $\SC(\Gamma)$ is a non-reduced configuration, bounding bigons. That is, there are curves $\g_i,\g_j\in\SC(\sD_1)$ with two points of geometric intersection, one positive and one negative, and these two intersection points bound a bigon. In this case the quiver $Q(\SC(\Gamma))$ has a 2-cycle and the potential $W(\SC(\Gamma))$ has a monomial accounting for it, as there is a bigon.
\end{itemize}

The main contribution of this paper is a solution to this problem which uses the curve potential constructed in Sections \ref{sec:curves} and \ref{sec:QP_PlabicFence}. By construction, $W(\SC(\beta))$ keeps track of polygons and, in particular, bigons. By Proposition \ref{thm:QPnondeg}, $(Q(\SC(\Gamma)),W(\SC(\Gamma)))$ is non-degenerate if $\Gamma$ is an $\L$-compressing system of the form $\Gamma=\Gamma(\beta)$. Non-degeneracy of the potential is then used to exclude possibility $(i)$ above from happening. That is, non-degeneracy ensures that whenever we have a 2-cycle between two vertices in the quiver we must also have a bigon bounded by the corresponding curves in the configuration. The reduction process, using Theorem \ref{thm:HassScott}, and the properties of the potential proven in Section \ref{sec:curves} are used to deal with possibility $(ii)$. Let us explain this in detail.
\color{black}

\color{black}


\subsubsection{$\L$-compressing systems and QPs under Lagrangian surgery} Consider the curve quiver with potential $(Q(\SC(\beta)),W(\SC(\beta)))$ constructed in Section \ref{sec:QP_PlabicFence}. We have the following two properties:

\begin{itemize}
     \item[(P1)] The QP $(Q(\SC(\beta)),W(\SC(\beta)))$ is reduced, as defined in Section \ref{sssec:DWZ}. Indeed, the construction of the QP $(Q(\bG),W(\bG))$ in Section \ref{ssec:QP_plabicfence} implies that is reduced for any plabic fence $\bG$. By Proposition \ref{lem:QP_coincide}, $(Q(\SC(\beta)),W(\SC(\beta)))$ coincides with $(Q(\bG(\beta)),W(\bG(\beta)))$, where $\bG(\beta)$ is the plabic fence associated to $\beta$. Thus $(Q(\SC(\beta)),W(\SC(\beta)))$ is reduced as well. \\

    \item[(P2)] The QP $(Q(\SC(\beta)),W(\SC(\beta)))$ is a curve QP, as in Definition \ref{def:quiverC}. By Proposition \ref{thm:QPnondeg}, it is a non-degenerate QP. In particular, there are never 2-cycles in any QP $(Q,W)$ which is related to  $(Q(\SC(\beta)),W(\SC(\beta)))$ by a sequence of QP mutations.
\end{itemize}

Let us use these properties, along with the results in Sections \ref{sec:curves} and \ref{sec:QP_PlabicFence}, and prove the following result.

\begin{prop}\label{prop:iterative_step}
Let $L\sse(\R^4,\la_{st})$ be an embedded exact Lagrangian filling and $\Gamma$ an $\L$-compressing system for $L$. Suppose that the configuration $\SC(\Gamma)$ is reduced and $(Q(\SC(\Gamma)),W(\SC(\Gamma)))$ is non-degenerate.

If $D\in\sD(\Gamma)$ is an $\L$-compressing disk with boundary $\g\in\SC(\Gamma)$ then:

\begin{itemize}
    \item[(i)] The Lagrangian filling $\mu_D(L)$ obtained from $L$ by Lagrangian disk surgery on $D$ admits an $\L$-compressing system $\mu_D(\Gamma)$ such that its associated QP $(Q(\mu_D(\Gamma)),W(\mu_D(\Gamma)))$ is reduced.\\

    \item[(ii)] The quiver with potential associated to $\mu_D(\Gamma)$ satisfies
    $$(Q(\SC(\mu_D(\Gamma))),W(\SC(\mu_D(\Gamma)))=(\mu_\g Q(\SC(\Gamma)),\mu_\g W(\SC(\Gamma)))$$
    and the quiver $Q(\SC(\mu_D(\Gamma)))$ contains no 2-cycles.
\end{itemize}
In addition, the curve configuration associated to $\mu_D(\Gamma)$ is a reduction of the $\g$-exchange of $\SC(\Gamma)$.

\end{prop}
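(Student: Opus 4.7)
The plan is to first build the Lagrangian filling via disk surgery, then massage its induced curve configuration to a reduced one using Hass--Scott, and finally identify the resulting QP with the algebraic QP-mutation. Concretely, Lagrangian disk surgery applied to $(L,D)$ produces an embedded exact Lagrangian filling $\mu_D(L)$ together with a natural $\L$-compressing collection whose underlying curve configuration is, by Lemma \ref{lem:configuration_Lagrangiansurgery}, the $\g$-exchange $\mu_\g(\SC(\Gamma))$. A priori this configuration need not be reduced: the $\g$-exchange can create bigons bounded by the new curves. Hence the collection coming directly from surgery is not yet the $\L$-compressing system $\mu_D(\Gamma)$ asserted in the proposition.

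To produce $\mu_D(\Gamma)$, I would iteratively apply the Hass--Scott algorithm (Theorem \ref{thm:HassScott}) to $\mu_\g(\SC(\Gamma))$, removing one bigon at a time by a sequence of triple point moves followed by a single local bigon move, until a reduced configuration $\SC'$ is reached. Each triple point move and each local bigon move is a front homotopy of the Legendrian lifts in $\dd\Op(\mu_D(L))$; the traces of the corresponding Legendrian isotopies in the symplectization are invertible Lagrangian concordances, and concatenating them with the surgered Lagrangian disks yields a new disjoint system of embedded $\L$-compressing disks $\sD(\mu_D(\Gamma))$ bounding $\SC'$. This defines $\mu_D(\Gamma)$ and proves the final sentence of the proposition: $\SC(\mu_D(\Gamma))=\SC'$ is a reduction of $\mu_\g(\SC(\Gamma))$ in the sense of Definition \ref{def:reduced_configuration}.

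For the QP identifications, Proposition \ref{prop:quivermutation} states that after a sequence of triple point and bigon moves, $(Q(\mu_\g(\SC(\Gamma))),W(\mu_\g(\SC(\Gamma))))$ becomes right-equivalent to the non-reduced QP-mutation $\mu_\g(Q(\SC(\Gamma)),W(\SC(\Gamma)))$. Applying Lemma \ref{lem:reduced_parts}(i) to the reduction $\SC'$, the curve QP $(Q(\SC'),W(\SC'))$ is the reduced part of this non-reduced mutation, hence, by Definition \ref{def:QP_mutation}, it is the QP-mutation $(\mu_\g Q(\SC(\Gamma)),\mu_\g W(\SC(\Gamma)))$ up to right-equivalence. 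Since $\SC'$ is reduced, the quadratic part of $W(\SC')$ vanishes, yielding (i) and the QP identification in (ii). For the 2-cycle claim, non-degeneracy is preserved under QP-mutation by Definition \ref{def:non_degenerate}, so $(Q(\SC'),W(\SC'))$ is non-degenerate and in particular 2-acyclic.

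The subtle point, and the main obstacle, is ensuring that Hass--Scott can actually be iterated: at every stage, the bigon to be eliminated must be \emph{local} in the sense of Assumption \ref{assumption:bigons} so that Lemma \ref{lem:bigon} applies and we can track the QP cleanly. This is where non-degeneracy is used twice. First, by Proposition \ref{prop:local_bigons}, non-degeneracy at each intermediate stage guarantees $\rho(B)\neq\la(B)$ for every bigon, so Lemma \ref{lem:bigon} provides a genuine $B$-reduction that splits off a trivial direct summand. Second, since right-equivalence (Proposition \ref{lem:triplepoint}) and splitting off trivial summands both preserve non-degeneracy, the hypothesis propagates through the iteration, so every subsequent bigon is also local and the procedure terminates in a reduced configuration whose QP is the desired QP-mutation with no 2-cycles.
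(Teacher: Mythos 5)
Your overall route coincides with the paper's: perform the surgery, identify the new configuration with the $\g$-exchange via Lemma \ref{lem:configuration_Lagrangiansurgery}, reduce it with Hass--Scott (Theorem \ref{thm:HassScott}), lift the planar moves to Legendrian isotopies to keep an $\L$-compressing system, and then combine Proposition \ref{prop:quivermutation} with Lemma \ref{lem:reduced_parts} to identify the resulting curve QP with the QP-mutation; your use of preservation of non-degeneracy to get 2-acyclicity is also how the paper concludes (ii). Your extra care in propagating non-degeneracy through the Hass--Scott iteration so that Proposition \ref{prop:local_bigons} applies to each intermediate bigon is a reasonable way to make the reliance on Assumption \ref{assumption:bigons} explicit.

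However, there is a genuine gap at the very first step. You assert that Lagrangian disk surgery on $(L,D)$ ``produces \ldots a natural $\L$-compressing collection whose underlying curve configuration is the $\g$-exchange $\mu_\g(\SC(\Gamma))$'', but this is exactly the point that can fail and that the hypotheses are there to rule out: if some curve $\g_i\in\SC(\Gamma)$ meets $\g$ in both a positive and a negative intersection point (equivalently, if $Q(\SC(\Gamma))$ has a $2$-cycle through the vertex $\g$), then the $\g$-exchange makes $\mu_\g(\g_i)$ \emph{immersed}, and the post-surgery collection is not an $\L$-compressing system at all (cf.\ Remark \ref{rmk:immersed} and the two problems listed in Section \ref{ssec:iteration}); bigons are only the milder half of the issue, and your proposal only addresses those. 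The missing argument, which is the first and key step of the paper's proof, is that $Q(\SC(\Gamma))$ contains no $2$-cycles: a $2$-cycle whose quadratic monomial appears in $W(\SC(\Gamma))$ would correspond to a bigon, contradicting the hypothesis that $\SC(\Gamma)$ is reduced, while an empty $2$-cycle would contradict non-degeneracy by Lemma \ref{lem:empty_cycles_degeneracy} (or directly Definition \ref{def:non_degenerate}). Only after this is established do all curves of the $\g$-exchange remain embedded, so that the surgered disks bound embedded curves and one obtains the $\L$-compressing system $\Gamma'$ to which your Hass--Scott reduction and QP bookkeeping can then be applied. With that paragraph inserted, the rest of your argument goes through as in the paper.
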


\begin{proof} Let us prove $(i)$. First, the quiver $Q(\SC(\Gamma))$ has no 2-cycles. Indeed, $\SC(\Gamma)$ is reduced by hypothesis, and thus there are no 2-cycles coming from the trivial part of $(Q(\SC(\Gamma)),W(\SC(\Gamma)))$. By hypothesis as well, $(Q(\SC(\Gamma)),W(\SC(\Gamma)))$ is non-degenerate, and thus Definition \ref{def:non_degenerate} implies that there are no empty 2-cycles either. By Lemma \ref{lem:configuration_Lagrangiansurgery}, the configuration of curves $\SC(\Gamma)$ undergoes a $\g$-exchange under Lagrangian disk surgery. Since $Q(\SC(\Gamma))$ has no 2-cycles, all curves in the resulting configuration are embedded. None of them is immersed. Therefore, Lagrangian disk surgery along $D$ produces a new $\L$-compressing system $\Gamma'$, such that $\SC(\Gamma')$ is the $\g$-exchange of $\SC(\Gamma)$.

Second, at this stage, $\SC(\Gamma')$ might not be a priori reduced. By Theorem \ref{thm:HassScott}, there exists a reduction $\mu_D(\Gamma)$ of $\SC(\Gamma')$. Since  $(Q(\SC(\Gamma)),W(\SC(\Gamma)))$ is non-degenerate, Lemma \ref{lem:reduced_parts} applies and shows that, up to right equivalence,
$$(Q(\mu_D(\Gamma)),W(\mu_D(\Gamma)))=(Q(\Gamma)_{red},W(\Gamma)_{red}).$$
This implies that $(Q(\mu_D(\Gamma)),W(\mu_D(\Gamma)))$ is reduced. This implies $(i)$. By construction, $\mu_D(\Gamma)$ is a reduction of $\SC(\Gamma')$, which itself is the $\g$-exchange of $\SC(\Gamma)$. The last sentence of the statement is thus proven as well.

For item $(ii)$, Proposition \ref{prop:quivermutation} implies the equality between the QPs. By item $(i)$, the quiver with potential $(Q(\SC(\mu_D(\Gamma))),W(\SC(\mu_D(\Gamma)))$ is reduced and thus the non-degeneracy of $(Q(\SC(\Gamma)),W(\SC(\Gamma)))$ implies that  $Q(\SC(\mu_D(\Gamma)))$ has no 2-cycles.
\end{proof}

\subsection{The ring of regular functions $\C[X(\La_\beta,T)]$}\label{ssec:aug}  Let $\beta$ be a positive braid word on $n$-strands. Consider the Legendrian link $\La_{\beta}\sse(\R^3,\xi_{st})$ and a set of marked points $T\sse\La_\beta$, one per component. Let $A_{\La_\beta}$ be Legendrian contact dg-algebra of $(\La_\beta,T)$ with $\Z$-coefficients. We refer to \cite[Section 5.1]{CasalsNg} or the survey \cite{Etnyre_ng} for details on $A_{\La_\beta}$. Consider its augmentation variety $X(\La_\beta,T)$, which is the space of all dg-algebra maps from $A_{\La_\beta}$ to $\C$, the latter considered as a dg-algebra in grading $0$ and zero differential.
Following \cite[Section 5]{CasalsNg}, the space $X(\La_\beta,T)$, which is naturally an affine variety, can be explicitly described as follows. For $k \in [n-1]$, define the $n\times n$ matrix $P_k(a)$, as a function of an input $a$, as:
\[
(P_k(a))_{ij} = \begin{cases} 1 & i=j \text{ and } i\neq k,k+1 \\
1 & (i,j) = (k,k+1) \text{ or } (k+1,k) \\
a & i=j=k+1 \\
0 & \text{otherwise.}
\end{cases}
\]
Namely, $P_k(a)$ is the identity matrix except for the $2\times 2$ submatrix given by rows and columns $k$ and $k+1$, which is $\left( \begin{smallmatrix} 0 & 1 \\ 1 & a \end{smallmatrix} \right)$. Consider the full twist $\Delta^2$ in $n$-strands and set $\ell=\ell(\beta\Delta^2)$ for the length of $\beta\Delta^2$. Given a braid word $\beta\Delta^2= \sigma_{k_1}\cdots\sigma_{k_\ell}$, where $\sigma_i\in\Br_n$ are Artin generators in $n$-strands, let $c=|\pi_0(\La_{\beta\Delta^2})|$ and choose $i_1,\ldots,i_c\in\N$ such that the $i_j$-th strand in $\beta\Delta^2$ is in its $j$-th connected component, $j\in[c]$. Define
$D(\mathbf{t}_{\beta\Delta^2})$ to be the diagonal $n\times n$ matrix with $(k,k)$-entry equal to $t_{j}$ if $k=i_j$ and $1$ otherwise, let $\mathbf{1}$ be the $n\times n$ identity matrix and set
$$P_{\beta\Delta^2}(z_1,\ldots,z_\ell;t_1,\ldots,t_c):=P_{k_1}(z_1)P_{k_2}(z_2)\cdots P_{k_\ell}(z_\ell) D(\mathbf{t}_{\beta\Delta^2}).$$
Then, by \cite[Proposition 5.2]{CasalsNg}, $X(\La_\beta,T)$ is isomorphic to the affine variety:
$$X(\La_\beta,T)\cong\{(z_1,\ldots,z_\ell;t_1,\ldots,t_c):\mathbf{1} + P_{\beta\Delta^2}(z_1,\ldots,z_\ell;t_1,\ldots,t_c)=0\}\sse\C^{\ell}\times(\C^\times)^c.$$

The next result is proven in \cite[Theorem 1.1]{GSW}. It also follows from \cite[Theorem 1.1]{CasalsWeng22}, after noticing that $X(\La_\beta,T)$ is isomorphic to a decorated moduli space of sheaves in $\R^2$ singularly supported in a front for $\La_\beta$.

\begin{thm}[\cite{CasalsWeng22}]\label{thm:cluster} Let $\beta$ be a positive braid word on $n$-strands, $\La_{\beta}\sse(\R^3,\xi_{st})$ its associated  Legendrian link and a set of marked points $T\sse\La_\beta$, one per component. Then the coordinate ring of regular functions $\C[X(\La_\beta,T)]$ is a cluster algebra. In addition, it has the following properties:

\begin{itemize}
    \item[(a)] Let $(L_\beta,\Gamma_\beta)$ be the pair of an embedded exact Lagrangian filling $L_\beta$ of $\La_\beta$ and an $\L$-compressing system $\Gamma_\beta=\Gamma(\bG(\beta))$ associated to the plabic fence $\bG(\beta)$. Then there exists a canonical cluster seed $\mathfrak{c}(L_\beta,\Gamma_\beta)$ associated to $L_\beta$ and it has quiver $Q(\bG(\beta))$.\\
    
    \item[(b)] Let $(L,\Gamma)$ be an exact Lagrangian filling with an $\L$-compressing system obtained from the Lagrangian filling $(L_\beta,\Gamma_\beta)$ by a sequence of Lagrangian disk surgeries along an ordered collection of curves $\g_1,\ldots,\g_k\in\Gamma_\beta$.\footnote{Here it is a hypothesis that such sequence of Lagrangian disk surgeries is such that the initial $\L$-compressing system $\Gamma_\beta$ carries through a sequence of $\L$-compressing systems along the disk surgeries and the last system is $\Gamma$.} Then\\

    \begin{itemize}
        \item[(i)] There is a canonical cluster seed $\mathfrak{c}(L,\Gamma)$ associated to $(L,\Gamma)$ and the cluster variables in the cluster seed $\mathfrak{c}(L,\Gamma)$ are computed by microlocal merodromies along the Poincar\'e dual relative cycles of the curves in the $\L$-compressing system $\Gamma$. In particular, cluster variables are indexed by the curves in $\Gamma$.\\

        \item[(ii)] The cluster seed $\mathfrak{c}(L,\Gamma)$ is $\mu_{v_k}\cdots\mu_{v_1}(\mathfrak{c}(L_\beta,\Gamma_\beta))$, the cluster seed obtained by the sequence of cluster mutations along the vertices $v_i\in Q(\bG(\beta))_0$ associated to $\g_i\in\Gamma(\bG(\beta))$, $i\in[k]$.
    \end{itemize}
\end{itemize}
\end{thm}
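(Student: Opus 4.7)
The plan is to pass through the sheaf-theoretic description of $X(\La_\beta,T)$. First I would identify $X(\La_\beta,T)$ with a moduli space of rank-one constructible sheaves on $\R^2$ with singular support on a front of $\La_\beta$, via the Guillermou--Kashiwara--Schapira--Shende--Treumann--Zaslow correspondence. This identification is intrinsically symplectic, and it is in this framework that the cluster structure on $\C[X(\La_\beta,T)]$ is built, avoiding the combinatorial detour through the dg-algebra $A_{\La_\beta}$.

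For part (a), take the front to be the alternating strand diagram of $\bG(\beta)$, so that the conjugate surface $\Sigma(\bG(\beta))$ produces both the filling $L_\beta$ and the $\L$-compressing system $\Gamma_\beta$ as in Section \ref{ssec:compressing_systems}. To each $\g\in\Gamma_\beta$ I would assign the microlocal merodromy of the rank-one sheaf along a relative $1$-cycle Poincar\'e-dual to $\g$ on $L_\beta$; this computes parallel transport in the rank-one local system arising from microlocalization along $L_\beta$. The resulting data gives an algebraic toric chart on $X(\La_\beta,T)$, with coordinates indexed by the curves of $\Gamma_\beta$ (equivalently, by the faces of $\bG(\beta)$). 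The exchange matrix of this seed is the signed geometric intersection pairing of curves in $\Gamma_\beta$, which by Proposition \ref{lem:QP_coincide} equals $Q(\bG(\beta))$.

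For part (b), I would propagate the construction along a sequence of Lagrangian disk surgeries. Since disk surgery is supported in a neighborhood of $D_{\g_i}$, its effect on microlocal merodromies can be computed from a local sheaf-theoretic model, namely the square move / weave mutation of \cite{STWZ,CasalsWeng22,CasalsZaslow}. The content to verify is that the transported coordinates transform according to the cluster $\cA$-variable exchange rule
\[
x_{\g}'\, x_{\g} \;=\; \prod_{\g'\to \g} x_{\g'} \;+\; \prod_{\g\to\g'} x_{\g'}.
\]
Combined with Proposition \ref{prop:quivermutation}, which controls the quiver side of the story, this identifies $\mathfrak{c}(\mu_{D_{\g_i}}L,\,\mu_{D_{\g_i}}\Gamma)$ with $\mu_{v_i}\mathfrak{c}(L,\Gamma)$, and iterating yields (b)(ii).

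The hard part will be the local sheaf-theoretic computation in step (b): showing that after Lagrangian disk surgery the parallel transports along the new curves in $\mu_\g(\Gamma)$ are built out of the old ones exactly by the cluster exchange binomial. This requires careful bookkeeping of orientations, of the frozen contributions from the marked points of $T$, and of the compatibility between the microlocal merodromies of $\g$ and $-\g$; it is precisely here that working with the intrinsic sheaf-theoretic definition of the cluster structure (rather than the augmentation dg-algebra side) is essential, since the mutation formula emerges naturally from microlocal parallel transport across the surgery locus.
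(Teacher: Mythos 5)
This statement is not proven in the manuscript at all: the paper treats Theorem \ref{thm:cluster} as imported from the literature, citing \cite[Theorem 1.1]{GSW} and \cite[Theorem 1.1]{CasalsWeng22} (the latter after identifying $X(\La_\beta,T)$ with a decorated moduli of sheaves singularly supported on a front of $\La_\beta$), together with the remark that \cite{CasalsWeng22} only yields an \emph{upper} cluster algebra and that one passes to the cluster algebra via local acyclicity \cite[Theorem 7.13]{CGGLSS} and \cite{Muller14_AU}. Your proposal is essentially an outline of the Casals--Weng route that the paper points to, so at the level of strategy you are on the same track as the cited source rather than on a genuinely different one.

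However, read as a self-contained proof your text has a genuine gap: the entire mathematical content of parts (a) and (b) is the assertion that microlocal merodromies give regular toric coordinates forming a seed whose quiver is the intersection quiver, and that under Lagrangian disk surgery these merodromies transform by the cluster $\cA$-exchange binomial. You explicitly defer exactly this ("the hard part will be the local sheaf-theoretic computation"), so nothing beyond the statement itself has been established; Proposition \ref{prop:quivermutation} only controls the quiver/potential side and says nothing about regular functions on $X(\La_\beta,T)$. Two further points would need attention even if that computation were carried out: (1) the exchange relation you write omits the frozen variables attached to $T$, which do enter the mutation formula for these seeds; and (2) showing that the charts assemble into a cluster \emph{algebra} structure on $\C[X(\La_\beta,T)]$ (rather than an upper cluster algebra) requires the $A=U$ step via local acyclicity and Muller's theorem, which your plan does not mention. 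If your intent is merely to invoke the known theorem, the honest proof is the citation the paper gives, not a re-derivation sketch.
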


Technically, \cite{CasalsWeng22} shows that $\C[X(\La_\beta,T)]$ is an upper cluster algebra with the above properties. That said, it is locally acyclic, see e.g.~\cite[Theorem 7.13]{CGGLSS}, and thus  the cluster algebra coincides with the upper cluster algebra, as proven in \cite{Muller14_AU}.

For Theorem \ref{thm:main}, it suffices to focus on the mutable part of the cluster algebra $\C[X(\La_\beta,T)]$: it is well-established that there are $b_1(L)$ mutable vertices in each seed of $\C[X(\La_\beta,T)]$, where $b_1(L)$ coincides for all embedded exact orientable Lagragian fillings $L$, as the frozen variables have only to do with the set of marked points $T$, cf.~\cite{CasalsWeng22,GSW}.

\begin{remark} Note that the class of Legendrian links $\La_\beta$ is such that $\C[X(\La_\beta,T)]$ has been proven to be a cluster algebra. This is unknown for a general Legendrian link $\La\sse(\R^3,\xi_{st})$. Among other reasons, this is due to the lack of definition of a cluster structure on a general derived stack, which is a matter of homotopical algebraic geometry. That said, the technique developed to prove Theorem \ref{thm:main} and Corollary \ref{cor:main} should likely apply to general Legendrian links $\La\sse(\R^3,\xi_{st})$ once it is understood how to make sense of cluster structures on $X(\La,T)$ and they are proven to exist.\hfill$\Box$
\end{remark}


\subsection{Proof of Theorem \ref{thm:main}}\label{ssec:proof_main} First, we choose the Lagrangian filling $L$ and the $\L$-compressing system $\Gamma$ in the statement to be $L:=L_\beta$ and $\Gamma:=\Gamma(\beta)$. This filling and $\L$-compressing system were both introduced in Subsection \ref{ssec:compressing_systems}. As stated there, the curve configurations $\SC(\Gamma(\beta))=\SC(\bG(\beta))$ coincide. Here $\bG(\beta)$ is the plabic fence associated to $\beta$, as described in Section \ref{sec:QP_PlabicFence}, and $\SC(\bG(\beta))$ is the associated configuration, as constructed in Section \ref{sssec:configuration_plabicfence}. We shorten notation to $\SC(\beta):=\SC(\bG(\beta))$.

Consider the cluster seed $\mathfrak{c}(L_\beta,\Gamma_\beta)$ in $\C[X(\La_\beta,T)]$ associated to $(L_\beta,\Gamma_\beta)$. By construction, the vertices of the quiver in $\mathfrak{c}(L_\beta,\Gamma_\beta)$ are given by curves in $\SC(\beta)$ and the arrows record signed intersections, cf.~ \cite{GSW} or \cite[Section 4]{CasalsWeng22}.  Thus the quiver in the seed $\mathfrak{c}(L_\beta,\Gamma_\beta)$ coincides with the quiver $Q(\SC(\beta))$ associated to the curve configuration $\SC(\beta)$ in $L_\beta=\Sigma(\bG(\beta))$. Note that the curves in $\SC(\beta)$ are smooth, oriented, embedded and form a basis of $H_1(L_\beta,\Z)$. Therefore, $\SC(\beta)$ is indeed a curve configuration.

It follows from Properties (P1) and (P2) in Subsection \ref{ssec:iteration}, or direct inspection, that $\SC(\beta)$ is reduced and $Q(\SC(\beta))$ contains no 2-cycles. Equally important, Proposition \ref{thm:QPnondeg} implies that the QP $(Q(\SC(\beta)),W(\SC(\beta)))$ is non-degenerate.

Let $(v_1,\ldots,v_\ell)$ be the sequence of mutable vertices in $Q(\SC(\beta))$ given to us in item $(i)$ of the statement. Let us construct an embedded exact Lagrangian filling $L_{k}$ for the seed $\mu_{v_k}\ldots\mu_{v_1}(\mathfrak{c}(L,\Gamma))$ along with an $\L$-compressing system for $L_k$. Let us denote by $\sD_0=\sD_\beta$ the collection of $\L$-compressing disks associated to $\Gamma(\beta)$. Now, the vertex $v_1$ corresponds to an $\L$-compressible curve $\g_1\in\SC(\Gamma(\beta))$. In order to obtain a Lagrangian filling in the seed $\mu_1:=\mu_{v_1}(\mathfrak{c}(L_\beta,\Gamma_\beta))$ we apply Proposition \ref{prop:iterative_step} with $L=L_\beta$ and $D=D_{v_1}\in\sD_0$ the $\L$-compressing disk associated to $\g_1$. The hypothesis of reducedness and non-degeneracy of the initial QP are indeed satisfied by the previous paragraph.

Proposition \ref{prop:iterative_step} now produces an $\L$-compressing system $\sD_1$ for the Lagrangian filling $L_1:=\mu_{D_{v_1}}(L_\beta)$ whose associated QP is reduced and non-degenerate. In addition, the Lagrangian disks in $\sD_1$ are in a specified bijection with the disks in $\sD_0$. This bijection is established as follows. The $\L$-compressing disks are indexed by the curves in the respective configurations. Lemma \ref{lem:configuration_Lagrangiansurgery} implies that the curves undergo a $\g$-exchange under Lagrangian disk surgery. By construction, curves in a configuration before and after a $\g$-exchange are in a specified bijection, as the vertices of the corresponding quiver are identified (via the identity), cf.~Section \ref{sssec:definition_exchange}. A reduction of a configuration also determines a unique bijection, as Lemmas \ref{lem:triplepoint} and \ref{lem:bigon} show that the vertices of the quiver remain (identically) the same under triple point moves and local bigon moves. Therefore, the Lagrangian disks in $\sD_1$ are indeed in a specified bijection with the disks in $\sD_0$.

Consider the next vertex $v_2$ of $Q(\SC(\beta))$ at which we must mutate. By the bijection above, this specifies a unique $\L$-compressing disk $D_{v_2}$ in $\sD_1$ in the $\L$-compressing system of $\mu_{D_{v_1}}(L_\beta)$. Since the QP associated to $\sD_1$ is reduced and non-degenerate, we can apply Proposition \ref{prop:iterative_step} to $\mu_{D_{v_1}}(L_\beta)$ and $\sD_1$. This produces a Lagrangian filling $L_2=\mu_{D_{v_2}}\mu_{D_{v_1}}(L_\beta)$ in the cluster seed $\mu_{v_2}\mu_{v_1}(\mathfrak{c}(L,\Gamma))$ with an $\L$-compressing system $\sD_2$. Since the QP associated to $\sD_2$ is again reduced and non-degenerate, we can iteratively apply Proposition \ref{prop:iterative_step} without any constraints. By Theorem \ref{thm:cluster}, this procedure indeed constructs the required Lagrangian fillings $L_k=\mu_{D_{v_k}}\ldots\mu_{D_{v_2}}\mu_{D_{v_1}}(L_\beta)$ with $\L$-compressing systems $\Gamma_k=\sD_k=\mu_{D_{v_k}}\ldots\mu_{D_{v_2}}\mu_{D_{v_1}}(\sD_\beta)$ in the required cluster seeds. This proves item $(i)$ of the statement. The construction we used, applying Proposition \ref{prop:iterative_step}, implies item $(ii)$.
\hfill$\Box$

\subsection{A related statement} Let $\La\sse(\R^3,\xi_{st})$ be a Legendrian link and $L\sse(\R^4,\la_{st})$ an embedded exact Lagrangian filling. Suppose that $\Gamma$ is a collection of $\ell$ oriented simple $\L$-compressible curves in $L$ which are linearly independent in $H_1(L;\Z)$, but not necessarily spanning. Let us refer to such a collection as a partial $\L$-compressing system of rank $\ell$. A curve QP $(Q(\Gamma),W(\Gamma))$ is still defined.\footnote{In fact, $(Q(\Gamma),W(\Gamma))$ is defined for any collection of oriented simple curves, even if they are not linearly independent in $H_1(L;\Z)$. Linear independence is only used in proving Proposition \ref{prop:local_bigons}. Thus one may proceed by just assuming that the assumptions in this proposition, non-degeneracy and the bigon condition, hold instead.} If we want to emphasize the dependence on $L$, we write $Q(L,\Gamma)=Q(\Gamma)$. If such QP $(Q(\Gamma),W(\Gamma))$ is non-degenerate, one may proceed as above and conclude a statement in line with Theorem \ref{thm:main} for more general Legendrian links $\La\sse(\R^3,\xi_{st})$. Following the same steps in the proof of Theorem \ref{thm:main}, we obtain the following:

\begin{thm}\label{thm:main_general}
Let $\La\sse(\R^3,\xi_{st})$ be a Legendrian link, $T\sse\La$ a set of marked points with one marked point per component and $X(\La,T)$ the affine scheme given by the spectrum of $H^0$ of the Legendrian contact dg-algebra of $\La\sse(\R^3,\xi_{st})$. Suppose that there exists an orientable embedded exact Lagrangian filling $L\sse(\R^4,\la_{st})$ of $\La$ and a partial $\mathbb{L}$-compressing system $\Gamma$ for $L$ of rank $\ell$ such that $(Q(\Gamma),W(\Gamma))$ is non-degenerate. Then:

\begin{itemize}
    \item[(i)] If $\mu_{v_\ell}\ldots\mu_{v_1}$ is any sequence of mutations, where $v_1,\ldots,v_\ell$ are mutable vertices of the quiver $Q(L,\Gamma)$, then there exists a sequence of embedded exact Lagrangian fillings $L_k$ of $\La$ with associated quivers $Q(L_k,\Gamma_k)=\mu_{v_k}\ldots\mu_{v_1}(Q(L,\Gamma))$, for all $k\in[\ell]$.\\

    \item[(ii)] Each embedded exact Lagrangian filling $L_k$ is equipped with a partial $\mathbb{L}$-compressing system $\Gamma_k$ of rank $\ell$ such that Lagrangian disk surgery on $L_k$ along any Lagrangian disk in $\sD(\Gamma_k)$ yields an $\mathbb{L}$-compressing system. Furthermore, $\Gamma_{k+1}$ is equivalent to such a partial $\mathbb{L}$-compressing system via a sequence of triple point moves and local bigon moves.
    
\end{itemize}
\color{black}
In addition, suppose that $\C[X(\La,T)]$ is a cluster algebra such that:
\begin{enumerate}
    \item $(L,\Gamma)$ defines a cluster seed $\mathfrak{c}(L,\Gamma)$ for $\C[X(\La,T)]$, up to quasi-cluster isomorphism, where $\Gamma$ is completed to a basis of $H_1(L,\Z)$ as in \cite{CasalsWeng22}. (Thus adding only frozens: here the resulting seed is independent of the chosen completion up to quasi-cluster isomorphism.)\\

    \item Lagrangian disk surgery corresponds to cluster mutation in $\C[X(\La,T)]$.
\end{enumerate}

Then $\mathfrak{c}(L_k,\Gamma_k)=\mu_{v_k}\ldots\mu_{v_1}(\mathfrak{c}(L,\Gamma))$ in $\C[X(\La,T)]$, for all $k\in[\ell]$. In particular, the map $\mathfrak{C}:\mbox{Lag}^c(\La)\lr\mbox{Seed}(\C[X(\La,T)])$ is surjective, i.e.~there exists an embedded exact Lagrangian filling endowed with an $\L$-compressing system realizing every cluster seed in $\C[X(\La,T)]$.
\hfill$\Box$
\end{thm}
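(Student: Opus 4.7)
The plan is to mirror the proof of Theorem \ref{thm:main} given in Section \ref{ssec:proof_main}, now taking the hypothesized pair $(L, \Gamma)$ as the initial datum in place of $(L_\beta, \Gamma(\beta))$. The proof of Theorem \ref{thm:main} relied only on three structural facts about the initial data: that it consists of an embedded exact Lagrangian filling equipped with an $\L$-compressing system, that its associated curve QP is non-degenerate (previously ensured by Proposition \ref{thm:QPnondeg}), and that the iterative step of Proposition \ref{prop:iterative_step} can be applied without obstruction. In the present setting the first two are supplied by hypothesis, and the task reduces to upgrading the iterative step to partial $\L$-compressing systems.

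First, if $\SC(\Gamma)$ is not already reduced, I would apply the Hass-Scott algorithm (Theorem \ref{thm:HassScott}) a finite number of times to produce an equivalent partial $\L$-compressing system whose configuration is reduced; by Proposition \ref{lem:triplepoint} and Lemma \ref{lem:bigon}, this preserves the right-equivalence class of the QP and in particular its non-degeneracy. Next, I would check that Proposition \ref{prop:iterative_step} generalizes verbatim to the partial-rank setting. The argument there uses only that the curves are embedded and that non-degeneracy of the QP rules out empty 2-cycles, so that a $\g$-exchange (Lemma \ref{lem:configuration_Lagrangiansurgery}) produces no immersed curves. The bigon condition (Assumption \ref{assumption:bigons}) required by Lemma \ref{lem:bigon} is secured by Proposition \ref{prop:local_bigons}, whose proof exploits linear independence of $\Gamma$ in $H_1(L; \Z)$ rather than spanning; thus the partial hypothesis is enough. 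With this in hand, I would inductively construct $L_k$ and $\Gamma_k$ by performing Lagrangian disk surgery at the disk $D_{v_k} \in \sD(\Gamma_{k-1})$ corresponding to the vertex $v_k$, using the canonical bijection between $\L$-compressing disks before and after a $\g$-exchange (and subsequent Hass-Scott reduction) to track the label $v_k$ throughout. Proposition \ref{prop:quivermutation} then yields the identification $Q(L_k, \Gamma_k) = \mu_{v_k} \cdots \mu_{v_1}(Q(L, \Gamma))$, giving item (i); item (ii) is immediate from the construction.

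For the final cluster algebra statement, the argument is functorial: once Lagrangian disk surgery at $D_{v_k}$ is matched with cluster mutation at $v_k$ on cluster variables via the microlocal parallel transport description (in line with Theorem \ref{thm:cluster}(b)), the identity $\mathfrak{c}(L_k, \Gamma_k) = \mu_{v_k} \cdots \mu_{v_1}(\mathfrak{c}(L, \Gamma))$ in $A$ follows inductively, and surjectivity of $\mathfrak{C}$ onto $\mathrm{Seed}(A)$ is a formal consequence. The main obstacle I expect is the careful verification that the arguments behind Proposition \ref{prop:iterative_step}, Lemma \ref{lem:reduced_parts} and Proposition \ref{prop:local_bigons} genuinely go through in the partial-rank setting, and in particular that linear independence of the curves in $H_1(L_k; \Z)$ persists under iterated $\g$-exchange and reduction. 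This should follow from the fact that $L_k$ is smoothly isotopic to $L$ relative to its boundary, so one works inside a single homology group and can track the effect of a $\g$-exchange on classes explicitly through the Picard-Lefschetz description of a Dehn twist used in the proof of Proposition \ref{prop:quivermutation}.
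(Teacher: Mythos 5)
Your proposal is correct and follows essentially the same route as the paper, whose proof of this theorem is precisely to repeat the steps of the proof of Theorem \ref{thm:main} with $(L,\Gamma)$ as initial data, noting (as you do) that linear independence of the curves in $H_1(L;\Z)$ is all that is used in Proposition \ref{prop:local_bigons}, so spanning is never needed. Your additional check that independence persists under iterated $\g$-exchanges is a reasonable refinement of the same argument, not a departure from it.
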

\color{black}
The advantage of Theorem \ref{thm:main_general} is that it applies to general $\La\sse(\R^3,\xi_{st})$. The disadvantage is that, in a given instance being studied, one must construct a partial $\mathbb{L}$-compressing system $\Gamma$, verify the hypothesis that $(Q(\Gamma),W(\Gamma))$ is non-degenerate and show that $\C[X(\La,T)]$ is a cluster algebra with the desired properties. The latter part can sometimes be established using the Starfish lemma \cite[Prop. 6.4.1]{FWZ} and local acyclicity (or a cyclic rotation argument as in \cite[Section 5.4]{CGGLSS}). In the case of Theorem \ref{thm:main}, we have built above a specific $\mathbb{L}$-compressing system $\Gamma=\Gamma_\beta$ for any $\La_\beta\sse (\R^3,\xi_{st})$ and proven, with Proposition \ref{thm:QPnondeg}, the non-degeneracy of its associated QP. For such links, it follows from \cite{CasalsWeng22,CGGLSS} that $\C[X(\La,T)]$ is a cluster algebra. See also \cite{GLSBS,GLSB} for an alternative construction of cluster structures on $\C[X(\La,T)]$ via 3D plabic graphs.
\color{black}
\bibliographystyle{alpha}
\bibliography{main}

\end{document}